\begin{document}
% THEOREM Environments ---------------------------------------------------
 \newtheorem{thm}{Theorem}[section]
 \newtheorem{cor}[thm]{Corollary}
 \newtheorem{lem}[thm]{Lemma}
 \newtheorem{prop}[thm]{Proposition}
 \theoremstyle{definition}
 \newtheorem{defn}[thm]{Definition}
 \theoremstyle{remark}
 \newtheorem{rem}[thm]{Remark}
 \newtheorem{ex}[thm]{Example}
 \theoremstyle{example}
 \numberwithin{equation}{section}

 \title{Movability  and co-movability of shape morphisms}
 \author{P.S.Gevorgyan and I.Pop}
 \date{}
 \maketitle
\begin{abstract}The purpose of this paper is to define some notions of movability for morphisms of inverse systems which extend the movability properties of inverse systems and which are compatible with the equivalence relations which define pro-morphisms and shape morphisms. Some properties, examples and applications are given.

\textit{2000 Mathematics Subject Classification.}  55P55, 54C56,

\textit{Keywords.} Movability and co-movability (strong, uniform) for morphism of inverse systems, of pro-morphisms, of shape morphisms and maps; Mittag-Leffler property.
\end{abstract}

\section{Introduction}
The notion of movability for metric compacta was introduced by K.
Borsuk \cite{Borsuk} as an important shape invariant. The movable
spaces are a generalization of spaces having the shape of ANR's. The
movability assumption allows a series of important results in
algebraic topology (like the Whitehead and Hurewicz theorems) to
remain valid when the homotopy pro-groups are replaced by the
corresponding shape groups. The term "movability" comes from the
geometric interpretation of the definition in the compact case: if
$X$ is a compactum lying in a space $M\in AR$, one says that $X$ is
movable if for every neighborhood $U$ of $X$ in $M$ there exists a
neighborhood $V\subset U$ of $X$ such that for every neighborhood
$W\subset U$ of $X$ there is a homotopy $H:V\times [0,1]\rightarrow
U$ such that $H(x,0)=x $ and $H(x,1)\in W$ for every $x\in V$. One
shows that the choice of $M\in AR$ is irrelevant \cite{Borsuk}.
After the notion of movability had been expressed in terms of
ANR-systems, for arbitrary topological spaces, \cite{Mard-Segal1},
\cite{Mard-Segal2}, it became clear that one could define it in
arbitrary pro-categories. The definition of a movable object in an
arbitrary pro-category and that of uniform movability were both given by Maria
Moszy\'{n}ska \cite{Moszy}. Uniform movability is important in the
study of mono- and epi-morphisms in pro-categories and in the study
of the shape of pointed spaces. In the book of Sibe Marde\v{s}i\'{c} and
Jack Segal \cite{Mard-Segal2} all these approaches and applications
of various types of movability are discussed.

Some categorical approaches to movability in shape theory were given by P.S.Gevorgyan \cite{Gev}, I. Pop \cite{Pop}, P.S. Gevorgyan $\&$ I. Pop \cite{Gev-Pop}, T.A. Avakyan $\&$ P.S. Gevorgyan \cite{Ava-Gev}.

The approach of this paper started from the idea that some results in shape theory, valid in some conditions of movability for inverse systems or shape objects, could be obtained in weak conditions. More precisely, the idea was to define some movability notions for the morphisms of inverse systems, so as, when these morphisms are identity morphisms, the definitions of the usual movability properties to be found both for pro-objects and shape objects. Besides the classic case of movability pro-objects and shape objects, some notions of movability appear in the papers of T. Yagasaki about the study of the shape fibrations \cite{Yag1} and \cite{Yag2}, in a paper of Z.\v{C}erin on Lefschetz movable maps \cite{Cer}, and in a paper of D. A. Edwards and P. Tulley MacAuley about a shape theory of a map \cite{Edw}. Unfortunately, these approaches are just particular cases and they do not deal with  the movability of shape morphisms in the general case of an abstract shape theory.
 But the second author of this paper already approached this problem in two previous article \cite {Pop2} and \cite{Pop3} in which some properties of movability for morphisms of inverse systems and for pro-morphisms were defined and studied. But those properties of movability  could not be extended to shape morphisms and maps. In the present paper this problem is solved by a slight but useful modification of the notions of movability and uniform movability for morphisms of inverse systems previously proposed, the new definitions becoming compatible with the relations which define the pro-morphism and shape morphism. In this way we obtain some satisfactory notions of movability (simple, strong or uniform) for shape morphisms and particularly for maps. Some "dual" notions, namely that of co-movability (simple, strong or uniform), are defined. Some properties, examples and applications are given.

\section{Movable and strong movable morphisms of inverse systems and pro-morphisms }
All sets of indices are supposed to be cofinite directed sets. This condition is not restrictive (cf.\cite{Mard-Segal2}, Ch.I,\S1.2).

First we recall from \cite{Mard-Segal2},Ch. II, \S 6.1 the following definition.
\begin{defn}\label{def.2.1} An object $\mathbf{X}=(X_\lambda,p_{\lambda\lambda'},\Lambda)$ of pro-$\mathcal{C}$ is \textsl{movable} provided every $\lambda \in \Lambda$ admits a $\lambda'\geq \lambda$ (called a \textsl{movability index} of $\lambda$) such that each $\lambda''\geq \lambda$ admits a morphism $r:X_{\lambda'}\rightarrow X_{\lambda''}$ of $\mathcal{C}$ which satisfies
\begin{equation}
p_{\lambda\lambda''}\circ r=p_{\lambda\lambda'},
\end{equation}
i.e. , makes the following diagram commutative

$$
 \xymatrix{
X_{\lambda'} \ar[rr]^{p_{\lambda\lambda'}}\ar@{-->}[dr]_{r} & & X_{\lambda}\\
& X_{\lambda''} \ar[ur]_{p_{\lambda\lambda''}} &
 }
$$

\end{defn}
\begin{defn}\label{def.2.2}
Let $\mathbf{X}=(X_\lambda,p_{\lambda\lambda'},\Lambda)$ and $\mathbf{Y}=(Y_\mu,q_{\mu\mu'},M)$ inverse systems in a the category $\mathcal{C}$ and $(f_\mu,\phi):\mathbf{X}\rightarrow \mathbf{Y}$ a morphism of inverse systems.
We say that the morphism $(f_\mu,\phi)$ is \textsl{movable} if every $\mu\in M$ admits $\lambda\in \Lambda$, $\lambda\geq \phi(\mu)$,
such that each $\mu'\in M$, $\mu'\geq\mu$, admits a morphism $u:X_{\lambda}\rightarrow Y_{\mu'}$, in the category $\mathcal{C}$, which satisfies
\begin{equation}
f_\mu\circ p_{\phi(\mu)\lambda}=q_{\mu\mu'}\circ u
\end{equation}
i.e., makes the following diagram commutative

\[\xymatrix{X_{\phi(\mu)} \ar[r]^{f_\mu} & Y_\mu\\
X_\lambda\ar[u]^{p_{\phi(\mu)\lambda}}\ar@{-->} [r]_u & Y_{\mu'}\ar[u]_{q_{\mu\mu'}}}\]
The index $\lambda$ is called a \textsl{movability index} (MI) of $\mu$ with respect to the morphism $(f_\mu,\phi)$.

The composition $f_\mu\circ p_{\phi(\mu)\lambda}$, for $\lambda\geq \phi(\mu)$ is denoted by $f_{\mu\lambda}$ (cf.\cite{Mard-Segal2}, Ch.II,\S 2.1).
By this notation the relation (2.2) becomes
$$ f_{\mu\lambda}=q_{\mu\mu'}\circ u. $$

$$
 \xymatrix{
X_{\lambda} \ar[rr]^{f_{\mu\lambda}}\ar@{-->}[dr]_{u} & & Y_{\mu}\\
& Y_{\mu'} \ar[ur]_{q_{\mu\mu'}} &
 }
$$

\end{defn}
\begin{rem}\label{rem.2.3}If $\lambda$ is a movability index for $\mu$ with respect to $(f_\mu,\phi)$, then any $\widetilde{\lambda}>\lambda$
has this property.
\end{rem}
\begin{ex}\label{ex.2.4}
Let $(X)$ be a rudimentary system in the category $\mathcal{C}$ and  $(f_\mu):(X)\rightarrow \mathbf{Y}=(Y_\mu,q_{\mu\mu'},M)$ a morphism of inverse systems. Then $(f_\mu)$ is movable. Indeed, we have $f_\mu=q_{\mu\mu'}\circ f_{\mu'}$.

More generally, if we consider a morphism $(f_\mu,\phi):\mathbf{X}=(X_\lambda,p_{\lambda\lambda'},\Lambda)\rightarrow \mathbf{Y}=(Y_\mu,q_{\mu\mu'},M)$ such that there exists $\lambda_M\in \Lambda$ satisfying $\lambda_M\geq \phi(\mu)$ for any $\mu\in M$, then $(f_\mu,\phi)$ is movable. Indeed, for an arbitrary index $\mu\in M$ and $\mu'\geq \mu$ we have $f_\mu\circ p_{\phi(\mu)\lambda_M}=q_{\mu\mu'}\circ f_{\mu'}\circ p_{\phi(\mu')\lambda_M}=q_{\mu\mu'}\circ u$,
where $u=f_{\mu'}\circ p_{\phi(\mu')\lambda_M}$ is a morphism from $X_{\lambda_M}$ to $Y_{\mu'}$. So, $\lambda_M$ is a movability index.

\end{ex}

\begin{thm}\label{thm.2.5}
 An inverse system $\mathbf{X}=(X_\lambda,p_{\lambda\lambda'},\Lambda)$ is movable if and only if the identity morphism $1_{\mathbf{X}}$ is movable.
\end{thm}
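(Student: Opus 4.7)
The plan is to show this by directly unpacking Definition \ref{def.2.2} in the special case where the morphism is the identity $1_{\mathbf{X}} = (1_{X_\lambda}, 1_\Lambda)$, and observing that the resulting condition is literally Definition \ref{def.2.1}. So this is a matter of careful bookkeeping rather than genuine argument.

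First I would write out the identity morphism explicitly: as a morphism of inverse systems $\mathbf{X} \to \mathbf{X}$, it is given by the index map $\phi = 1_\Lambda$ and component morphisms $f_\mu = 1_{X_\mu}$ for each $\mu \in \Lambda$. Then I would substitute these into Definition \ref{def.2.2}, relabeling $\mu, \mu'$ as $\lambda, \lambda''$ since now $M = \Lambda$. The condition that $(1_{X_\lambda}, 1_\Lambda)$ is movable becomes: every $\lambda \in \Lambda$ admits some $\lambda' \geq \lambda$ such that each $\lambda'' \geq \lambda$ admits $u : X_{\lambda'} \to X_{\lambda''}$ satisfying $1_{X_\lambda} \circ p_{\lambda\lambda'} = p_{\lambda\lambda''} \circ u$, that is,
\[
p_{\lambda\lambda'} = p_{\lambda\lambda''} \circ u .
\]
This is identical to the condition in Definition \ref{def.2.1}, where $u$ plays the role of $r$, and the MI of $\lambda$ in the sense of Definition \ref{def.2.2} coincides with the movability index of $\lambda$ in the sense of Definition \ref{def.2.1}.

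Hence the two directions of the equivalence come for free by the same computation: if $\mathbf{X}$ is movable, pick for each $\lambda$ a movability index $\lambda'$ in the sense of Definition \ref{def.2.1} and use the same $\lambda'$ and the same morphism $r$ as the witness $u$ required by Definition \ref{def.2.2} for $1_{\mathbf{X}}$; conversely, if $1_{\mathbf{X}}$ is movable, the MI and morphism $u$ furnished by Definition \ref{def.2.2} serve directly as the movability index and morphism $r$ in Definition \ref{def.2.1}.

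There is no real obstacle; the only minor point to be attentive to is that $\phi(\mu) = \mu$ forces $\lambda \geq \phi(\mu)$ in Definition \ref{def.2.2} to collapse to $\lambda' \geq \lambda$ in Definition \ref{def.2.1}, and that the composite $f_{\mu\lambda} = f_\mu \circ p_{\phi(\mu)\lambda}$ reduces to $p_{\lambda\lambda'}$. Once these identifications are made, the theorem is immediate.
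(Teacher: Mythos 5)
Your proof is correct and follows essentially the same route as the paper: both arguments simply observe that, after unpacking the identity morphism $(1_{X_\lambda},1_\Lambda)$ in Definition \ref{def.2.2}, a movability index in the sense of Definition \ref{def.2.1} is exactly a movability index with respect to $1_{\mathbf{X}}$ and vice versa. Your version just makes the bookkeeping ($\phi=1_\Lambda$, $f_{\mu\lambda}=p_{\lambda\lambda'}$) more explicit than the paper does.
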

\begin{proof}Let an inverse system $\mathbf{X}=(X_\lambda,p_{\lambda\lambda'},\Lambda)$ be movable. Consider an arbitrary $\lambda\in \Lambda$. Note
that a movability index $\lambda'\geq \lambda$ is also a movability index of $\lambda$ with respect to $1_{\mathbf{X}}:\mathbf{X}\rightarrow \mathbf{X}$.

Conversely, let $1_{\mathbf{X}}:\mathbf{X}\rightarrow \mathbf{X}$ be a movable morphism. Note that for any $\lambda\in \Lambda$ a movability index $\lambda'\geq \lambda$ of $\lambda$ with respect to $1_{\mathbf{X}}$ is also a movability index for the inverse system $\mathbf{X}=(X_\lambda,p_{\lambda\lambda'},\Lambda)$.
\end{proof}

\begin{thm}\label{thm.2.6}

Let $\mathbf{X}=(X_\lambda,p_{\lambda\lambda'},\Lambda)$, $\mathbf{Y}=(Y_\mu,q_{\mu\mu'},M)$, $\mathbf{Z}=(Z_\nu,r_{\nu\nu'},N)$ be inverse systems in the category $\mathcal{C}$ and $(f_\mu,\phi):\mathbf{X}\rightarrow \mathbf{Y}$, $(g_\nu,\psi):\mathbf{Y}\rightarrow \mathbf{Z}$ morphisms of inverse systems. Suppose that $(g_\nu,\psi)$ is movable. Then the composition $(h_\nu,\chi)=(g_\nu,\psi)\circ(f_\mu,\phi)$ is also a movable morphism.
\end{thm}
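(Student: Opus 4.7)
The plan is, for each $\nu\in N$, to manufacture a movability index for $\nu$ relative to the composition $(h_\nu,\chi)$ by combining a movability index supplied by the hypothesis on $(g_\nu,\psi)$ with the coherence identity built into the fact that $(f_\mu,\phi)$ is a morphism of inverse systems. Recall that $\chi=\phi\circ\psi$ and $h_\nu=g_\nu\circ f_{\psi(\nu)}:X_{\chi(\nu)}\to Z_\nu$, so I must show that every $\nu\in N$ admits some $\lambda\geq\chi(\nu)$ such that for every $\nu'\geq\nu$ there is a morphism $u:X_\lambda\to Z_{\nu'}$ with $h_\nu\circ p_{\chi(\nu)\lambda}=r_{\nu\nu'}\circ u$.

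Fix $\nu\in N$. First I would use the movability of $(g_\nu,\psi)$ to pick $\mu\in M$ with $\mu\geq\psi(\nu)$ which is a movability index for $\nu$ relative to $(g_\nu,\psi)$: for every $\nu'\geq\nu$ there exists $v:Y_\mu\to Z_{\nu'}$ with $g_{\nu\mu}=r_{\nu\nu'}\circ v$. Next I would apply the standard coherence property for the morphism $(f_\mu,\phi)$ to the pair $\psi(\nu)\leq\mu$ in $M$ to obtain some $\lambda\in\Lambda$ satisfying $\lambda\geq\phi(\mu)$ and $\lambda\geq\phi(\psi(\nu))=\chi(\nu)$ together with the identity
$$f_{\psi(\nu),\lambda}=q_{\psi(\nu)\mu}\circ f_{\mu\lambda}.$$

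The claim is that this $\lambda$ is a movability index of $\nu$ with respect to $(h_\nu,\chi)$. To verify it, for $\nu'\geq\nu$ I would take the associated $v:Y_\mu\to Z_{\nu'}$ and set $u:=v\circ f_{\mu\lambda}:X_\lambda\to Z_{\nu'}$; then expanding $h_\nu\circ p_{\chi(\nu)\lambda}$ using $h_\nu=g_\nu\circ f_{\psi(\nu)}$ and applying first the coherence identity above and then the movability identity $g_{\nu\mu}=r_{\nu\nu'}\circ v$ yields $h_\nu\circ p_{\chi(\nu)\lambda}=r_{\nu\nu'}\circ u$, which is exactly the required relation~(2.2) for the composition.

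The argument is essentially careful bookkeeping of index functions; the only subtle point, and the main place one has to be careful, is that the coherence identity used in the second step is not part of the raw data of $(f_\mu,\phi)$ at individual indices but is guaranteed only for $\lambda$ sufficiently large above both $\phi(\mu)$ and $\phi(\psi(\nu))$. Hence one cannot simply declare $\lambda=\phi(\mu)$; the value of $\lambda$ must be enlarged to absorb this coherence. Observe that $(f_\mu,\phi)$ itself is not assumed movable — only $(g_\nu,\psi)$ is — which matches the asymmetric form of the statement and makes the theorem a genuine one-sided stability property of movability under composition.
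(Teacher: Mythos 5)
Your proposal is correct and follows essentially the same route as the paper: take a movability index $\mu\geq\psi(\nu)$ for $\nu$ relative to $(g_\nu,\psi)$, enlarge to $\lambda\geq\phi(\mu),\chi(\nu)$ using the coherence identity $f_{\psi(\nu)\lambda}=q_{\psi(\nu)\mu}\circ f_{\mu\lambda}$ of the morphism $(f_\mu,\phi)$, and compose the resulting $v:Y_\mu\to Z_{\nu'}$ with $f_{\mu\lambda}$ to produce the required lift. Your explicit remark that $\lambda$ must be chosen large enough to absorb the coherence (and independently of $\nu'$) is exactly the point the paper handles, just stated more carefully.
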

\begin{proof}
Recall that we have $\chi=\phi\circ \psi$ and $h_\nu=g_\nu\circ f_{\psi(\nu)}$. If $(g_\nu,\psi)$ is movable and $\nu\in N$, then for a movability index of $\nu\in N$,  $\mu\in M $, with  $\mu\geq \psi(\nu)$, an index $\nu'\in N$, with   $\nu'\geq \nu$ admits a morphism in $\mathcal{C}$, $u:Y_{\mu}\rightarrow Z_{\nu'}$ such that $ g_{\nu}\circ q_{\psi(\nu)\mu}=r_{\nu\nu'}\circ u$

\[\xymatrix{Y_{\psi(\nu)}\ar[r]^{g_{\nu}} & Z_\nu\\
Y_{\mu}\ar[u]^{q_{\psi(\nu)\mu}}\ar@{-->} [r]_u & Z_{\nu'}\ar[u]_{r_{\nu\nu'}}}\]

Now consider  $\lambda\in \Lambda$ with $\lambda\geq \phi(\mu)$, $\lambda\geq \phi(\psi(\nu))$ such that $f_{\psi(\nu)}\circ p_{\phi(\psi(\nu))\lambda}=q_{\psi(\nu)\mu}\circ f_{\mu}\circ p_{\phi(\mu)\lambda}$.
Then we can consider the morphism $u':=u\circ f_{\mu}\circ p_{\phi(\mu)\lambda}:X_{\lambda}\rightarrow Z_{\nu'}$. For this morphism we obtain: $r_{\nu\nu'}\circ u'= (r_{\nu\nu'}\circ u)\circ f_{\mu}\circ p_{\phi(\mu)\lambda}= g_\nu\circ q_{\psi(\nu)}\circ f_{\mu}\circ p_{\phi(\mu)\lambda}=
g_\nu\circ f_{\psi(\nu)}\circ p_{\phi(\psi(\nu))}=h_\nu\circ p_{\chi(\nu)\lambda}$.

\[\xymatrix{X_{\chi(\nu)} \ar[r]^{h_\nu} & Z_\nu\\
X_{\lambda}\ar[u]^{p_{\chi(\nu)\lambda}}\ar@{-->} [r]_{u'} & Z_{\nu'}\ar[u]_{r_{\nu\nu'}}}\]

\end{proof}
\begin{cor}\label{cor.2.7}Let an inverse system $\mathbf{X}=(X_\lambda,p_{\lambda\lambda'},\Lambda)$ be arbitrary and $\mathbf{Y}=(Y_\mu,q_{\mu\mu'},M)$
be movable. Then any morphism $(f_\mu,\phi):\mathbf{X}\rightarrow \mathbf{Y}$ is movable.
\end{cor}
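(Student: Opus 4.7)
The plan is to derive this immediately from the two theorems just proved, by writing the morphism as a trivial composition involving the identity. Since $\mathbf{Y}$ is movable, Theorem \ref{thm.2.5} tells us that the identity morphism $1_{\mathbf{Y}}:\mathbf{Y}\to\mathbf{Y}$ is a movable morphism of inverse systems. Hence we have at our disposal, for every $\mu \in M$, a movability index of $\mu$ with respect to $1_{\mathbf{Y}}$, and this is exactly the data Theorem \ref{thm.2.6} consumes.

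Next, I would observe the identity $(f_\mu,\phi) = 1_{\mathbf{Y}} \circ (f_\mu,\phi)$ in the category of inverse systems, so that $(f_\mu,\phi)$ is itself a composition whose \emph{second} factor is movable. Applying Theorem \ref{thm.2.6} with $\mathbf{Z}=\mathbf{Y}$, $(g_\nu,\psi)=1_{\mathbf{Y}}$, and the original $(f_\mu,\phi)$ in the role of the first factor, we conclude that the composition, which is $(f_\mu,\phi)$ itself, is movable. This finishes the proof.

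There is no real obstacle here: the content lies entirely in Theorems \ref{thm.2.5} and \ref{thm.2.6}, and the corollary is just the observation that a movable codomain forces every incoming morphism of inverse systems to be movable, via the ``identity trick''. If one preferred, the same conclusion could be read off directly from Definition \ref{def.2.2}: given $\mu \in M$, pick a movability index $\mu' \geq \mu$ of $\mu$ in $\mathbf{Y}$ and then take $\lambda := \phi(\mu)$ together with the morphism $u := r \circ f_{\mu''} \circ p_{\phi(\mu'')\lambda}$ obtained from the movability datum $r:Y_{\mu'}\to Y_{\mu''}$ of $\mathbf{Y}$; but the route via Theorems \ref{thm.2.5} and \ref{thm.2.6} is shorter and conceptually cleaner, and that is the one I would present.
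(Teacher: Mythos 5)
Your main argument is correct and is exactly the paper's proof: write $(f_\mu,\phi)=1_{\mathbf{Y}}\circ(f_\mu,\phi)$, note $1_{\mathbf{Y}}$ is movable by Theorem \ref{thm.2.5} since $\mathbf{Y}$ is movable, and apply Theorem \ref{thm.2.6}. (The parenthetical ``direct'' sketch at the end is muddled --- the choice $\lambda:=\phi(\mu)$ and the undefined index $\mu''$ do not assemble into the required datum --- but since you present the composition argument as your proof, this does not affect the verdict.)
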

\begin{proof}Since $(f_\mu,\phi)=1_{\mathbf{Y}}\circ (f_\mu,\phi)$ and $1_{\mathbf{Y}}:\mathbf{Y}\rightarrow \mathbf{Y}$ is a movable morphism by Theorem
\ref{thm.2.5}, then $(f_\mu,\phi)$ is also movable according to Theorem \ref{thm.2.6}.
\end{proof}

\begin{thm}\label{thm.2.8}
Let $\mathbf{X}=(X_\lambda,p_{\lambda\lambda'},\Lambda)$, $\mathbf{Y}=(Y_\mu,q_{\mu\mu'},M)$, $\mathbf{Z}=(Z_\nu,r_{\nu\nu'},N)$ be inverse systems in the category $\mathcal{C}$ and $(f_\mu,\phi):\mathbf{X}\rightarrow \mathbf{Y}$, $(g_\nu,\psi):\mathbf{Y}\rightarrow \mathbf{Z}$ morphisms of inverse systems. Suppose that $(f_\mu,\phi)$ is movable. Then the composition $(h_\nu,\chi)=(g_\nu,\psi)\circ(f_\mu,\phi)$ is also a movable morphism.
\end{thm}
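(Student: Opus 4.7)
The plan is to mirror the structure of Theorem \ref{thm.2.6}, but now using movability of $(f_\mu,\phi)$ upstream to supply an approximate factorization of $f_{\psi(\nu)}\circ p_{\phi(\psi(\nu))\lambda}$ through the correct bonding map of $\mathbf Y$, and then pushing this forward by a component of $(g_\nu,\psi)$ to reach $Z_{\nu'}$.

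Concretely, given $\nu\in N$, I would set $\mu:=\psi(\nu)$ and let $\lambda\in\Lambda$ be a movability index of $\mu$ with respect to $(f_\mu,\phi)$, so in particular $\lambda\geq\phi(\mu)=\phi(\psi(\nu))=\chi(\nu)$. The claim is that this same $\lambda$ is a movability index of $\nu$ with respect to $(h_\nu,\chi)$.

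To verify the claim, fix $\nu'\geq\nu$. Using the definition of morphism of inverse systems applied to $(g_\nu,\psi)$ at the pair $\nu\leq\nu'$, I would pick $\mu'\in M$ with $\mu'\geq \psi(\nu)$, $\mu'\geq\psi(\nu')$ such that
\[
g_\nu\circ q_{\psi(\nu)\mu'}=r_{\nu\nu'}\circ g_{\nu'}\circ q_{\psi(\nu')\mu'}.
\]
Since $\mu'\geq\mu$, movability of $(f_\mu,\phi)$ at $\mu$ with index $\lambda$ supplies a morphism $v:X_\lambda\to Y_{\mu'}$ of $\mathcal C$ with $f_\mu\circ p_{\phi(\mu)\lambda}=q_{\mu\mu'}\circ v$, i.e. $f_{\psi(\nu)}\circ p_{\phi(\psi(\nu))\lambda}=q_{\psi(\nu)\mu'}\circ v$. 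Then define
\[
u':=g_{\nu'}\circ q_{\psi(\nu')\mu'}\circ v\;:\;X_\lambda\longrightarrow Z_{\nu'},
\]
and a short diagram chase gives
\[
r_{\nu\nu'}\circ u'=g_\nu\circ q_{\psi(\nu)\mu'}\circ v=g_\nu\circ f_{\psi(\nu)}\circ p_{\chi(\nu)\lambda}=h_\nu\circ p_{\chi(\nu)\lambda},
\]
which is exactly what movability of $(h_\nu,\chi)$ at $\nu$ requires.

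The only delicate point is the choice of $\mu'$: it must simultaneously dominate $\psi(\nu)$ and $\psi(\nu')$ and witness the compatibility relation for $(g_\nu,\psi)$ on the pair $\nu\leq\nu'$. All three conditions are satisfied by any sufficiently large $\mu'$ by cofiniteness/directedness of $M$ together with the definition of a morphism of inverse systems, so this is a bookkeeping step rather than a genuine obstacle; the rest is the diagram chase displayed above.
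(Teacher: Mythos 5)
Your proof is correct and follows essentially the same route as the paper: you take a movability index $\lambda$ of $\psi(\nu)$ with respect to $(f_\mu,\phi)$, use the compatibility relation of $(g_\nu,\psi)$ at the pair $\nu\leq\nu'$ to pick a suitable $\mu'\geq\psi(\nu),\psi(\nu')$, obtain $v$ from movability, and push it forward by $g_{\nu'}\circ q_{\psi(\nu')\mu'}$ — exactly the paper's construction of its morphism $U$. The diagram chase and the bookkeeping on $\mu'$ are both as in the paper, so nothing further is needed.
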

\begin{proof}For an arbitrary index $\nu\in N$, consider a movability index  $\lambda$ of $\psi(\nu)$, $\lambda\geq \phi(\psi(\nu))$, with respect to
$(f_\mu,\phi)$. Let us prove that $\lambda$ is a movability index of $\nu$ with respect to the morphism $(h_\nu,\chi)$.

Let $\nu'\in N,\nu'\geq \nu$ be any index.  Consider $\mu'\geq \psi(\nu'),\psi(\nu)$, and such that
$$ r_{\nu\nu'}\circ g_{\nu'}\circ q_{\psi(\nu')\mu'}=g_\nu\circ q_{\psi(\nu)\mu'}.$$

For $q_{\psi(\nu)\mu'}$ by the movability of $(f_\mu,\phi):\mathbf{X}\rightarrow \mathbf{Y}$ there exists a morphism $u:X_{\lambda}\rightarrow Y_{\mu'}$ such that
$$  f_{\psi(\nu)}\circ p_{\phi(\psi(\nu))\lambda}=q_{\psi(\nu)\mu'}\circ u,$$
and define $U:X_{\lambda}\rightarrow Z_{\nu'}$, by
$$ U=g_{\nu'}\circ q_{\psi(\nu')\mu'}\circ u. $$

Now we have: $r_{\nu\nu'}\circ U=r_{\nu\nu'}\circ g_{\nu'}\circ q_{\psi(\nu')\mu'}\circ u=g_\nu\circ q_{\psi(\nu)\mu'}\circ u=
g_\nu\circ q_{\psi(\nu)\mu}\circ f_{\mu}\circ p_{\phi(\mu)\lambda}=g_\nu\circ f_{\psi(\nu)}\circ p_{\chi(\nu)\lambda}=h_\nu\circ p_{\chi(\nu)\lambda}$.

\end{proof}
\begin{cor}\label{cor.2.9}Let $\mathbf{X}=(X_\lambda,p_{\lambda\lambda'},\Lambda)$ be movable and $\mathbf{Y}=(Y_\mu,q_{\mu\mu'},M)$ be arbitrary
inverse system. Then any morphism $(f_\mu,\phi):\mathbf{X}\rightarrow \mathbf{Y}$ is movable.
\end{cor}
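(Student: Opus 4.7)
The plan is to mirror exactly the argument used for Corollary \ref{cor.2.7}, but with the roles of the hypotheses swapped: instead of invoking Theorem \ref{thm.2.6} (which transports movability through the \emph{right} factor of a composition), we invoke Theorem \ref{thm.2.8} (which transports it through the \emph{left} factor).

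The key observation is that any morphism $(f_\mu,\phi):\mathbf{X}\to\mathbf{Y}$ can be written as the composition
\[
(f_\mu,\phi) \;=\; (f_\mu,\phi)\circ 1_{\mathbf{X}},
\]
with $1_{\mathbf{X}}:\mathbf{X}\to\mathbf{X}$ on the right. Since $\mathbf{X}$ is assumed movable, Theorem \ref{thm.2.5} gives that the identity morphism $1_{\mathbf{X}}$ is a movable morphism of inverse systems. Now Theorem \ref{thm.2.8} applies to the composition above (with $1_{\mathbf{X}}$ playing the role of the movable first factor and $(f_\mu,\phi)$ the role of the arbitrary second factor), and yields that the composition, i.e., $(f_\mu,\phi)$ itself, is movable.

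There is essentially no obstacle here; the corollary is a formal consequence of the two results just cited. The only thing to double-check is that the direction of Theorem \ref{thm.2.8} matches, namely that movability of the \emph{first} factor of a composition suffices to conclude movability of the whole (this is indeed the content of Theorem \ref{thm.2.8}, which is dual in this respect to Theorem \ref{thm.2.6}). Once that is confirmed, the proof reduces to the short two-line argument above, parallel to Corollary \ref{cor.2.7}.
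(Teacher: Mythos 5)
Your proposal is correct and is essentially identical to the paper's own proof: both write $(f_\mu,\phi)=(f_\mu,\phi)\circ 1_{\mathbf{X}}$, use Theorem \ref{thm.2.5} to get movability of $1_{\mathbf{X}}$, and then apply Theorem \ref{thm.2.8}. Nothing further is needed.
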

\begin{proof}Since $(f_\mu,\phi)=(f_\mu,\phi)\circ 1_{\mathbf{X}}$ and $1_{\mathbf{X}}:\mathbf{X}\rightarrow \mathbf{X}$ is a movable morphism by Theorem
\ref{thm.2.5}, then $(f_\mu,\phi)$ is also movable according to Theorem \ref{thm.2.8}.
\end{proof}

\begin{prop}\label{prop.2.10}
Let $(f_\mu,\phi),(f'_\mu,\phi'):\mathbf{X}=(X_\lambda,p_{\lambda\lambda'},\Lambda)\rightarrow \mathbf{Y}=(Y_\mu,q_{\mu\mu'},M)$ be two equivalent morphisms of inverse systems, $(f_\mu,\phi)\sim (f'_\mu,\phi')$. If the morphism $(f_\mu,\phi)$ is movable then the morphism $(f'_\mu,\phi')$ is also movable.
\end{prop}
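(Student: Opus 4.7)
The plan is to combine the defining equality of the equivalence relation $\sim$ on morphisms of inverse systems with a movability index of $(f_\mu,\phi)$, using Remark \ref{rem.2.3} to slide up to a common index where both the equivalence identity and the movability property hold simultaneously.

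Recall that $(f_\mu,\phi)\sim(f'_\mu,\phi')$ means: for every $\mu\in M$ there exists $\lambda_0\in\Lambda$ with $\lambda_0\geq\phi(\mu)$ and $\lambda_0\geq\phi'(\mu)$ such that
\[
 f_\mu\circ p_{\phi(\mu)\lambda_0}=f'_\mu\circ p_{\phi'(\mu)\lambda_0}.
\]
Fix an arbitrary $\mu\in M$. First I would invoke the movability of $(f_\mu,\phi)$ to get a movability index $\lambda_1\geq\phi(\mu)$ of $\mu$ with respect to $(f_\mu,\phi)$. Then I would choose the equivalence index $\lambda_0$ above and, using that $\Lambda$ is directed, pick $\lambda^*\in\Lambda$ with $\lambda^*\geq\lambda_0$ and $\lambda^*\geq\lambda_1$. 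By construction $\lambda^*\geq\phi'(\mu)$, and by Remark \ref{rem.2.3} $\lambda^*$ is still a movability index of $\mu$ with respect to $(f_\mu,\phi)$. Composing the equivalence identity with $p_{\lambda_0\lambda^*}$ yields
\[
 f_\mu\circ p_{\phi(\mu)\lambda^*}=f'_\mu\circ p_{\phi'(\mu)\lambda^*},
\]
i.e.\ $f_{\mu\lambda^*}=f'_{\mu\lambda^*}$ in the notation of Definition \ref{def.2.2}.

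Next I would verify that $\lambda^*$ is a movability index of $\mu$ with respect to $(f'_\mu,\phi')$. Given any $\mu'\geq\mu$, the movability of $(f_\mu,\phi)$ at $\lambda^*$ provides a morphism $u\colon X_{\lambda^*}\to Y_{\mu'}$ of $\mathcal{C}$ with
\[
 q_{\mu\mu'}\circ u=f_\mu\circ p_{\phi(\mu)\lambda^*}.
\]
Combining with the identity displayed above gives $q_{\mu\mu'}\circ u=f'_\mu\circ p_{\phi'(\mu)\lambda^*}$, so the same $u$ witnesses movability of $(f'_\mu,\phi')$ at $\mu$ through $\lambda^*$. Since $\mu\in M$ was arbitrary, $(f'_\mu,\phi')$ is movable.

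There is no real obstacle here; the only delicate point is making sure that the single index $\lambda^*$ is large enough both to validate the equivalence identity (so that the diagram for $(f'_\mu,\phi')$ reduces to that of $(f_\mu,\phi)$) and to serve as a movability index. This is handled by directedness of $\Lambda$ together with Remark \ref{rem.2.3}, which guarantees that movability indices are upward hereditary.
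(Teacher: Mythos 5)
Your proof is correct and follows essentially the same route as the paper's: both take a common index dominating a movability index of $\mu$ for $(f_\mu,\phi)$ and the index realizing the equivalence identity, observe (via Remark \ref{rem.2.3} and composition with bonding morphisms) that at this index $f_{\mu\lambda^*}=f'_{\mu\lambda^*}$ and movability still holds, and then reuse the same morphism $u$ for $(f'_\mu,\phi')$. Your write-up merely makes explicit the directedness argument that the paper compresses into ``we can suppose that $\lambda'\geq\lambda$''.
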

\begin{proof}
Let $\mu\in M$ with $\lambda$ a movability index  for $\mu$ with respect to the morphism $(f_\mu,\phi)$. Now by the definition of the equivalence relation between morphisms of inverse systems (see \cite{Mard-Segal2},Ch.I,\S1.1), for the index $\mu$ there exist an index $\lambda'\geq \phi(\mu),\phi'(\mu)$ such that
\begin{equation}
 f_\mu\circ p_{\phi(\mu)\lambda'}=f'_\mu\circ p_{\phi'(\mu)\lambda'},
\end{equation}
and we can suppose that $\lambda'\geq \lambda$, such that we refer at $\lambda'$ as a movability index of $\mu$ with respect to $(f_\mu,\phi)$. Then we can prove that $\lambda'$ is also a movability index for $\mu$ with respect to the morphism
$(f'_\mu,\phi')$. Let be $\mu'\in M,\mu' \geq\mu$. Then there exists a morphism $u:X_{\lambda'}\rightarrow Y_{\mu'}$ such that $ f_{\mu\lambda'} =q_{\mu\mu'}\circ u$,

$$
 \xymatrix{
X_{\lambda'} \ar[rr]^{f_{\mu\lambda'}}\ar@{-->}[dr]_{u} & & Y_\mu\\
& Y_{\mu'} \ar[ur]_{q_{\mu\mu'}} &
 }
$$

But by relation (2.3) we have that $f_{\mu\lambda'}=f'_{\mu\lambda'}$ such that $f'_{\mu\lambda'}=q_{\mu\mu'}\circ u$.

\end{proof}
Recall that the pro-category pro-$\mathcal{C}$ of a category $\mathcal{C}$ has as objects all inverse systems $\mathbf{X}$ in $\mathcal{C}$ (over all directed sets $\Lambda$) and as morphisms $\mathbf{f:X\rightarrow Y}$ the equivalence classes of morphisms of inverse systems
$(f_\mu,\phi):\mathbf{X}\rightarrow \mathbf{Y}$ with respect to the equivalence relation $\sim$ above considered.
Thanks to Proposition \ref{prop.2.10} we can give the following definition.
\begin{defn}\label{defn.2.11}
A morphism in a pro-category pro-$\mathcal{C}$, $\mathbf{f}:\mathbf{X}\rightarrow \mathbf{Y}$,  is called \textsl{movable} if $\mathbf{f}$ admits a representation $(f_\mu,\phi):\mathbf{X}\rightarrow \mathbf{Y}$ which is movable.
\end{defn}
The next theorem follows from Theorems \ref{thm.2.6}, \ref{thm.2.8} and Corollaries \ref{cor.2.7}, \ref{cor.2.9}.
\begin{thm}\label{thm.2.12}
A composition of an arbitrary pro-morphism  at right or left with a movable pro-morphism is a movable pro-morphism. Particularly, if for a pro-morphism $\mathbf{f}:\mathbf{X}\rightarrow \mathbf{Y}$, one of $\mathbf{X}$ or $\mathbf{Y}$ is a movable system, then $\mathbf{f}$ is a movable pro-morphism.
\end{thm}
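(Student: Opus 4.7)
The plan is to reduce the assertions about pro-morphisms to the corresponding assertions already established for morphisms of inverse systems. Recall that by Definition \ref{defn.2.11} a pro-morphism is movable if and only if it admits some representative that is a movable morphism of inverse systems, and composition in pro-$\mathcal{C}$ is computed on representatives by $(g_\nu,\psi)\circ(f_\mu,\phi)=(g_\nu\circ f_{\psi(\nu)},\phi\circ\psi)$.

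Given pro-morphisms $\mathbf{f}:\mathbf{X}\rightarrow\mathbf{Y}$ and $\mathbf{g}:\mathbf{Y}\rightarrow\mathbf{Z}$ with $\mathbf{g}$ movable, I would pick a movable representative $(g_\nu,\psi)$ of $\mathbf{g}$ (available by Definition \ref{defn.2.11}) together with an arbitrary representative $(f_\mu,\phi)$ of $\mathbf{f}$. Then $(g_\nu,\psi)\circ(f_\mu,\phi)$ is a representative of $\mathbf{g}\circ\mathbf{f}$, and Theorem \ref{thm.2.6} asserts that this composition is a movable morphism of inverse systems; hence $\mathbf{g}\circ\mathbf{f}$ is a movable pro-morphism by Definition \ref{defn.2.11}. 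The case in which the left factor $\mathbf{f}$ is assumed movable is handled in the same way, except that one picks a movable representative of $\mathbf{f}$ and an arbitrary representative of $\mathbf{g}$ and applies Theorem \ref{thm.2.8} in place of Theorem \ref{thm.2.6}.

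For the concluding assertion, suppose that $\mathbf{Y}$ is a movable inverse system. Theorem \ref{thm.2.5} then yields that $1_{\mathbf{Y}}:\mathbf{Y}\rightarrow\mathbf{Y}$ is a movable morphism of inverse systems, so the identity pro-morphism $\mathbf{1}_{\mathbf{Y}}$ is movable. Writing $\mathbf{f}=\mathbf{1}_{\mathbf{Y}}\circ\mathbf{f}$ and applying the first part of the theorem (equivalently, applying Corollary \ref{cor.2.7} at the level of representatives) shows that $\mathbf{f}$ is movable. The analogous case, in which $\mathbf{X}$ is movable, is dispatched via $\mathbf{f}=\mathbf{f}\circ\mathbf{1}_{\mathbf{X}}$ together with Theorem \ref{thm.2.5} and Corollary \ref{cor.2.9}.

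No real obstacle arises; the entire argument is a repackaging of previously established facts at the pro-categorical level. The single point worth watching is well-definedness, namely that the composition of chosen representatives legitimately represents the composed pro-morphism and that the conclusion is insensitive to the arbitrary choice of representative of the non-movable factor. Proposition \ref{prop.2.10} takes care of the latter by ensuring that movability is $\sim$-invariant, so in fact every representative of a movable pro-morphism is movable, and the proof reduces to the two line invocation sketched above.
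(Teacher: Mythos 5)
Your proposal is correct and follows exactly the route the paper intends: the paper simply states that Theorem \ref{thm.2.12} follows from Theorems \ref{thm.2.6}, \ref{thm.2.8} and Corollaries \ref{cor.2.7}, \ref{cor.2.9}, and your argument fills in precisely those reductions, including the well-definedness point handled by Proposition \ref{prop.2.10}. Nothing further is needed.
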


\begin{cor}\label{cor.2.13}Let $\mathbf{X}=(X_\lambda,p_{\lambda\lambda'},\Lambda)$ and $\mathbf{Y}=(Y_\mu,q_{\mu\mu'},M)$ in pro-$\mathcal{C}$. If $\mathbf{Y}$ is movable and $\mathbf{X}$ is dominated by $\mathbf{Y}$ in pro-$\mathcal{C}$, $\mathbf{X}\leq \mathbf{Y}$,  then $\mathbf{X}$ is also movable.
\end{cor}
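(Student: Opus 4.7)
The plan is to reduce the statement to Theorem \ref{thm.2.12} combined with Theorem \ref{thm.2.5}, using the definition of domination in pro-$\mathcal{C}$. Recall that $\mathbf{X}\leq\mathbf{Y}$ means there exist pro-morphisms $\mathbf{f}:\mathbf{X}\to\mathbf{Y}$ and $\mathbf{g}:\mathbf{Y}\to\mathbf{X}$ such that $\mathbf{g}\circ\mathbf{f}=1_{\mathbf{X}}$. So the first step is simply to unpack this.

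The core of the argument is to exhibit $1_{\mathbf{X}}$ as a movable pro-morphism. Since $\mathbf{Y}$ is a movable inverse system, Theorem \ref{thm.2.12} applied to $\mathbf{f}:\mathbf{X}\to\mathbf{Y}$ (the ``particularly'' clause) yields that $\mathbf{f}$ is a movable pro-morphism. Then a second application of Theorem \ref{thm.2.12}, now composing the arbitrary pro-morphism $\mathbf{g}$ at the left with the movable pro-morphism $\mathbf{f}$, gives that $\mathbf{g}\circ\mathbf{f}=1_{\mathbf{X}}$ is a movable pro-morphism.

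The remaining delicate point, and the one I would flag as the main conceptual step, is passing from ``$1_{\mathbf{X}}$ is movable as a pro-morphism'' to ``the identity morphism of inverse systems $(1_{X_\lambda},1_{\Lambda})$ is movable.'' By Definition \ref{defn.2.11}, movability of the pro-morphism $1_{\mathbf{X}}$ only guarantees the existence of \emph{some} representative $(f_\mu,\phi):\mathbf{X}\to\mathbf{X}$ which is movable as a morphism of inverse systems. But $(f_\mu,\phi)$ and the obvious representative $(1_{X_\lambda},1_{\Lambda})$ both represent the same pro-morphism, hence are equivalent under $\sim$. Proposition \ref{prop.2.10} then transfers movability from $(f_\mu,\phi)$ to $(1_{X_\lambda},1_{\Lambda})$.

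Finally, Theorem \ref{thm.2.5} closes the loop: movability of the identity morphism $1_{\mathbf{X}}$ of inverse systems is equivalent to movability of $\mathbf{X}$ itself, which is the desired conclusion. The whole argument is therefore a short assembly of Theorems \ref{thm.2.5} and \ref{thm.2.12} and Proposition \ref{prop.2.10}; no new diagram chase or index manipulation is needed.
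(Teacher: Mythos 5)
Your proof is correct and follows essentially the same route as the paper: unpack the domination, use Theorem \ref{thm.2.12} to see that $\mathbf{f}$ and hence $1_{\mathbf{X}}=\mathbf{g}\circ\mathbf{f}$ are movable pro-morphisms, and conclude by Theorem \ref{thm.2.5}. Your extra step invoking Proposition \ref{prop.2.10} to pass from ``some movable representative of $1_{\mathbf{X}}$'' to the movability of the level identity $(1_{X_\lambda},1_{\Lambda})$ is a point the paper leaves implicit, and filling it in is a legitimate (and welcome) refinement rather than a different argument.
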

\begin{proof}By assumption there exist the pro-morphisms $\mathbf{f}:\mathbf{X}\rightarrow \mathbf{Y}$ and $\mathbf{g}:\mathbf{Y}\rightarrow \mathbf{X}$
such that $\mathbf{g}\circ \mathbf{f}=1_{\mathbf{X}}$. Because $\mathbf{Y}$ is movable, by Theorem  \ref{thm.2.12} it follows that $\mathbf{f}$, $\mathbf{g}$ and $1_\mathbf{X}$ are movable pro-morphisms and hence, $\mathbf{X}$ is movable by Theorem \ref{thm.2.5}.
\end{proof}
\begin{rem}\label{rem.2.14}
Corollary \ref{cor.2.13} is a new proof of an important result in the theory of shape (see \cite{Mard-Segal2} Theorem 1, Ch. II, \S 6.1).
\end{rem}
\begin{defn}\label{def.2.15}
A morphism of inverse systems $(f_{\mu},\phi):((X_\lambda,*),p_{\lambda\lambda'},\Lambda)\rightarrow ((Y_\mu,*),q_{\mu\mu'},M)$ of pointed sets is said to have the \textsl{Mittag-Leffler property} provided every $\mu\in M$ admits a $\lambda\in \Lambda, \lambda \geq \phi(\mu)$ (an ML \textsl{index for $\mu$ with respect to} $(f_\mu,\phi)$, such that for any $\lambda'\in \Lambda, \lambda'\geq \lambda$   one has

\begin{equation}
 f_{\mu\lambda'}(X_{\lambda'})=f_{\mu\lambda}(X_\lambda).
\end{equation}

If $(f_\mu,\phi)$ is replaced by $1_{(\mathbf{X},*)}$ we obtain the Mittag-Leffler property for an inverse system in the category pro-Set$_*$ (cf.\cite{Mard-Segal2},Ch. II, \S 6.2).

The same condition defines the Mittag-Leffler property for a morphism of pro-groups.
\end{defn}
\begin{prop}\label{prop.2.16}A morphism of inverse systems of pointed sets with the Mittag-Leffler property is movable.
\end{prop}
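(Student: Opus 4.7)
The plan is to show that any Mittag-Leffler (ML) index is automatically a movability index. Fix $\mu \in M$ and let $\lambda \in \Lambda$, $\lambda \geq \phi(\mu)$, be an ML index for $\mu$ with respect to $(f_\mu,\phi)$, so that $f_{\mu\lambda'}(X_{\lambda'}) = f_{\mu\lambda}(X_\lambda)$ for every $\lambda' \geq \lambda$. I claim this same $\lambda$ serves as a movability index.

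To see this, take any $\mu' \geq \mu$ in $M$. By the definition of morphism of inverse systems, there is some $\lambda^* \geq \phi(\mu),\phi(\mu')$ with $f_{\mu\lambda^*} = q_{\mu\mu'} \circ f_{\mu'\lambda^*}$. Pick $\lambda' \in \Lambda$ with $\lambda' \geq \lambda$ and $\lambda' \geq \lambda^*$; composing with $p_{\lambda^*\lambda'}$ gives
\[
f_{\mu\lambda'} = q_{\mu\mu'} \circ f_{\mu'\lambda'}.
\]
By the ML property at the index $\lambda$, we have $f_{\mu\lambda}(X_\lambda) = f_{\mu\lambda'}(X_{\lambda'})$. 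Hence for every $x \in X_\lambda$ there exists some $x' \in X_{\lambda'}$ such that $f_{\mu\lambda'}(x') = f_{\mu\lambda}(x)$.

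Now I would invoke the axiom of choice to select, for each $x \in X_\lambda$ other than the base point, such a preimage $x'_x \in X_{\lambda'}$, setting $x'_* = *$ at the base point. Define $u : X_\lambda \to Y_{\mu'}$ by $u(x) = f_{\mu'\lambda'}(x'_x)$. This $u$ is a pointed map, and the verification
\[
q_{\mu\mu'}(u(x)) = q_{\mu\mu'}\bigl(f_{\mu'\lambda'}(x'_x)\bigr) = f_{\mu\lambda'}(x'_x) = f_{\mu\lambda}(x)
\]
shows that $q_{\mu\mu'} \circ u = f_{\mu\lambda}$, which is exactly the movability condition for $\mu'$.

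The one place that requires real care rather than routine bookkeeping is the construction of $u$: the ML equality $f_{\mu\lambda}(X_\lambda)=f_{\mu\lambda'}(X_{\lambda'})$ only gives existence of preimages, not a canonical section, so producing a genuine pointed function $X_\lambda \to Y_{\mu'}$ relies on the axiom of choice (which is available in the category of pointed sets). The same argument transfers verbatim to pro-groups, where instead of AC one uses that each element of the image subgroup has a preimage and makes a pointed choice.
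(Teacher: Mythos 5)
Your proof is correct and is essentially the paper's own argument: your pointwise choice of preimages $x\mapsto x'_x$ is exactly the section $r:(X_\lambda,\ast)\rightarrow (X_{\lambda'},\ast)$ with $f_{\mu\lambda'}\circ r=f_{\mu\lambda}$ that the paper constructs from the Mittag-Leffler equality, and your $u$ is the paper's $u=f_{\mu'\lambda'}\circ r$. One caveat on your closing aside: the argument does \emph{not} transfer verbatim to pro-groups, since the chosen section (hence $u$) need not be a homomorphism; this is precisely why the paper treats the group case separately in Proposition~\ref{prop.2.17}, using a free-group source and extending from a basis.
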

\begin{proof}Let $(f_\mu,\phi):((X_\lambda,*),p_{\lambda\lambda'},\Lambda)\rightarrow ((Y_\mu,*),q_{\mu\mu'},M)$ with the Mittag-Leffler property.
Then for $\mu\in M$ there is an ML index $\lambda \in \Lambda, \lambda\geq \phi(\mu)$  such that (2.4) holds for each $\lambda'\geq \lambda$. We can prove that $\lambda$ is a movability index for $\mu$ with respect to $(f_\mu,\phi)$. For $\mu'\in M$, $\mu'\geq \mu$, consider $\lambda'\in \Lambda$ satisfying the conditions $\lambda'\geq \lambda\geq\phi(\mu)$, $\lambda'\geq \phi(\mu')$ and
$$ q_{\mu\mu'}\circ f_{\mu'}\circ p_{\phi(\mu')\lambda'}=f_\mu\circ p_{\phi(\mu)\lambda'},$$
i.e.,
\begin{equation}
q_{\mu\mu'}\circ f_{\mu'\lambda'}=f_{\mu\lambda'}.
\end{equation}
Then since $\lambda'\geq \lambda$ we have also satisfied (2.4). This last relation implies the existence of a function $r:(X_\lambda,*)\rightarrow
(X_{\lambda'},*)$ satisfying
\begin{equation}
f_{\mu\lambda'}\circ r=f_{\mu\lambda}.
\end{equation}
Now from the relations (2.5), (2.6) we have: $f_{\mu\lambda}=q_{\mu\mu'}\circ f_{\mu'\lambda'}\circ r$, so that we denote $u:=f_{\mu'\lambda'}\circ r:
(X_\lambda,*)\rightarrow (Y_{\mu'},*)$,
we have
$$ f_{\mu\lambda}=q_{\mu\mu'}\circ u,$$
which is the movability condition.

\end{proof}

\begin{prop}\label{prop.2.17}
Consider in the category Grp of groups an inverse system $\mathbf{G}=(G_\lambda,q_{\mu\mu'},M)$ and in the category $\mathcal{F}$ of free groups an inverse systems $\mathbf{F}=(F_\lambda,p_{\lambda\lambda'},\Lambda)$. Then if a morphism $(f_\mu,\phi):\mathbf{F}\rightarrow \mathbf{G}$  is  movable as a morphism in pro-Set$_\ast$, it is movable in pro-Grp.
\end{prop}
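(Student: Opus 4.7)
The plan is to start from a pointed-set movability datum and upgrade each set-theoretic lifting to a group homomorphism by invoking freeness of the systems $F_\lambda$. Given $\mu\in M$, let $\lambda\geq \phi(\mu)$ be a movability index for $\mu$ with respect to $(f_\mu,\phi)$ viewed as a morphism of inverse systems of pointed sets. I claim that the very same index $\lambda$ serves as a movability index for $\mu$ with respect to $(f_\mu,\phi)$ in pro-Grp.

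Fix an arbitrary $\mu'\in M$ with $\mu'\geq \mu$. By movability in pro-Set$_\ast$ there is a pointed map $u:F_\lambda\to G_{\mu'}$ satisfying $q_{\mu\mu'}\circ u = f_{\mu\lambda}$ as maps of pointed sets. This $u$ is typically not a homomorphism, so the task is to replace it by one. Choose a free basis $S\subset F_\lambda$ (which exists because $F_\lambda$ lies in the category $\mathcal{F}$ of free groups) and define $\tilde u:F_\lambda\to G_{\mu'}$ to be the unique group homomorphism with $\tilde u(s)=u(s)$ for every $s\in S$; its existence is the universal property of the free group $F_\lambda$. Note that $\tilde u$ is automatically a morphism of pointed sets since any homomorphism sends the identity to the identity.

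It remains to verify the diagram $q_{\mu\mu'}\circ \tilde u = f_{\mu\lambda}$. Both sides are group homomorphisms $F_\lambda\to G_\mu$, and on each basis element $s\in S$ they agree: $(q_{\mu\mu'}\circ \tilde u)(s)=q_{\mu\mu'}(u(s))=f_{\mu\lambda}(s)$, where the last equality is exactly the pro-Set$_\ast$ relation for $u$. Since $S$ generates $F_\lambda$ as a group, two homomorphisms from $F_\lambda$ coinciding on $S$ coincide everywhere, and the required identity follows. Hence $\tilde u$ witnesses the movability condition of Definition \ref{def.2.2} in pro-Grp.

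There is no real obstacle in this argument; its whole content is the rigidity afforded by freeness, namely that a homomorphism out of a free group is both determined by, and freely prescribable on, any basis. The only thing worth checking is that $\tilde u$ is a pointed map, and this is automatic for homomorphisms. I would expect the same strategy to apply verbatim to strong or uniform movability, whenever the additional coherence conditions are themselves formulated as equalities of homomorphisms that need only be verified on a set of generators.
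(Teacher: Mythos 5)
Your proof is correct and follows essentially the same route as the paper: take the pro-Set$_\ast$ movability index, replace the set-theoretic lifting by the homomorphism it induces on a free basis of $F_\lambda$, and conclude $q_{\mu\mu'}\circ\tilde u=f_{\mu\lambda}$ because two homomorphisms out of a free group agreeing on a basis are equal. No gaps.
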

\begin{proof}
Let for $\mu\in M $ a movability index $\lambda$ with respect to $(f_\mu,\phi)$ in pro-Set$_\ast$. If $\mu'\geq \mu$, then there exists a function $r:F_{\lambda}\rightarrow G_{\mu'}$ satisfying $f_{\mu\lambda}=q_{\mu\mu'}\circ r$. If $B$ is a basis for $F_{\lambda}$ then there exists a homomorphism $u:F_{\lambda}\rightarrow G_{\mu'}$ such that $u|B=r|B$. Then the homomorphisms $ f_{\mu\lambda}$ and $q_{\mu\mu'}\circ u$ coincide on $B$ and therefore they are equal.
\end{proof}
For a new property of movability for morphisms of inverse systems we recall from \cite{Mard-Segal2},Ch. II, \S 9.3 the following definition.
\begin{defn}\label{def.2.18}
An object $\mathbf{X}=(X_\lambda,p_{\lambda\lambda'},\Lambda)\in $ pro-$\mathcal{C}$ is called \textsl{strongly movable} provided every $\lambda\in \Lambda$ admits a $\lambda'\geq \lambda$ (a \textsl{strong movability index} of $\lambda$) such that for every $\lambda''\geq \lambda$ there exist a $\lambda^\ast\geq \lambda',\lambda''$ and a morphism $r:X_{\lambda'}\rightarrow X_{\lambda''}$ satisfying
\begin{equation}
p_{\lambda\lambda''}\circ r=p_{\lambda\lambda'},
\end{equation}
\begin{equation}
r\circ p_{\lambda'\lambda^\ast}=p_{\lambda''\lambda^\ast},
\end{equation}
i.e., rendering the following diagram commutative.
$$
 \xymatrix{
{X_{\lambda^\ast}}\ar[rr]^{p_{\lambda'\lambda^\ast}}\ar[dd]_{p_{\lambda''\lambda^\ast}} && {X_{\lambda'}}\ar@{-->}[ddll]_{r}\ar[dd]^{p_{\lambda\lambda'}}\\
& &\\
{X_{\lambda''}}\ar[rr]_{p_{\lambda\lambda''}} && {X_{\lambda}}
 }
$$
\end{defn}
Obviously that strong movability implies movability of an inverse system.
\begin{defn}\label{def.2.19}
Let $\mathbf{X}=(X_\lambda,p_{\lambda\lambda'},\Lambda)$ and $\mathbf{Y}=(Y_\mu,q_{\mu\mu'},M)$ inverse systems in a the category $\mathcal{C}$ and $(f_\mu,\phi):\mathbf{X}\rightarrow \mathbf{Y}$ a morphism of inverse systems.
We say that the morphism $(f_\mu,\phi)$ is \textsl{strongly} \textsl{movable} if every $\mu\in M$ admits($\lambda\geq \phi(\mu)$), a \textsl{strong movability index} of $\mu$ with respect to $(f_\mu,\phi)$ such that every $\mu'\geq \mu$ exists  a $\lambda^\ast\geq \lambda,\phi(\mu')$ and a morphism $u:X_\lambda\rightarrow Y_{\mu'}$ satisfying
\begin{equation}
q_{\mu\mu'}\circ u= f_{\mu\lambda},
\end{equation}
\begin{equation}
u\circ p_{\lambda\lambda^\ast}=f_{\mu'\lambda^\ast},
\end{equation}
i.e., the following diagram is commutative.

$$
 \xymatrix{
X_{\lambda^\ast}\ar[rr]^{p_{\lambda\lambda^\ast}}\ar[dd]_{f_{\mu'\lambda^\ast}} && X_\lambda\ar@{-->}[ddll]_u\ar[dd]^{f_{\mu\lambda}}\\
& &\\
Y_{\mu'}\ar[rr]_{q_{\mu\mu'}} && Y_\mu
 }
$$

\end{defn}
Obviously that strong movability of morphisms implies movability.

\begin{rem}\label{rem.2.20}
An inverse system $\mathbf{X}=(X_\lambda,p_{\lambda\lambda'},\Lambda)$ is strongly movable if and only if the identity morphism $1_{\mathbf{X}}$ is strongly movable.
\end{rem}
\begin{prop}\label{prop.2.21}

Let $\mathbf{X}=(X_\lambda,p_{\lambda\lambda'},\Lambda)$, $\mathbf{Y}=(Y_\mu,q_{\mu\mu'},M)$, $\mathbf{Z}=(Z_\nu,r_{\nu\nu'},N)$ be inverse systems in the category $\mathcal{C}$ and $(f_\mu,\phi):\mathbf{X}\rightarrow \mathbf{Y}$, $(g_\nu,\psi):\mathbf{Y}\rightarrow \mathbf{Z}$ morphisms of inverse systems. Suppose that $(g_\nu,\psi)$ is strongly movable. Then the composition $(h_\nu,\chi)=(g_\nu,\psi)\circ(f_\mu,\phi)$ is also a strongly movable morphism.
\end{prop}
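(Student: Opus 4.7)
The plan is to follow the pattern of the proof of Theorem 2.6 for the existence of a movability-style index and factoring morphism, and then upgrade it by exploiting the extra commutativity relation (2.10) available from strong movability of $(g_\nu,\psi)$.

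First I would fix $\nu\in N$ and invoke strong movability of $(g_\nu,\psi)$ to obtain a strong movability index $\mu\geq\psi(\nu)$. Next, I would use the morphism-of-inverse-systems property for $(f_\mu,\phi)$ applied to the pair $\psi(\nu)\leq\mu$ to choose $\lambda\in\Lambda$ with $\lambda\geq\phi(\mu),\phi(\psi(\nu))$ such that
\[
f_{\psi(\nu)}\circ p_{\phi(\psi(\nu))\lambda}=q_{\psi(\nu)\mu}\circ f_{\mu}\circ p_{\phi(\mu)\lambda},
\]
exactly as in Theorem 2.6. I claim this $\lambda$ is a strong movability index of $\nu$ with respect to $(h_\nu,\chi)$.

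Given $\nu'\geq\nu$, strong movability of $(g_\nu,\psi)$ supplies $\mu^{\ast}\geq\mu,\psi(\nu')$ and a morphism $v:Y_\mu\to Z_{\nu'}$ with $r_{\nu\nu'}\circ v=g_{\nu\mu}$ and $v\circ q_{\mu\mu^{\ast}}=g_{\nu'\mu^{\ast}}$. I set
\[
U:=v\circ f_{\mu\lambda}:X_\lambda\to Z_{\nu'}.
\]
Verification of $r_{\nu\nu'}\circ U=h_{\nu\lambda}$ is immediate from $r_{\nu\nu'}\circ v=g_{\nu\mu}=g_\nu\circ q_{\psi(\nu)\mu}$ together with the commutativity equation chosen above, reducing to the same calculation as in Theorem 2.6.

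The main work is to produce $\lambda^{\ast}\geq\lambda,\phi(\psi(\nu'))$ satisfying $U\circ p_{\lambda\lambda^{\ast}}=h_{\nu'\lambda^{\ast}}$. Here I would apply the morphism-of-inverse-systems condition on $(f_\mu,\phi)$ twice: once to the pair $\mu\leq\mu^{\ast}$ to obtain a threshold index $\lambda_1\geq\phi(\mu),\phi(\mu^{\ast})$ with $f_{\mu\lambda}=q_{\mu\mu^{\ast}}\circ f_{\mu^{\ast}\lambda}$ for every $\lambda\geq\lambda_1$, and once to the pair $\psi(\nu')\leq\mu^{\ast}$ to obtain $\lambda_2\geq\phi(\psi(\nu')),\phi(\mu^{\ast})$ with $f_{\psi(\nu')\lambda}=q_{\psi(\nu')\mu^{\ast}}\circ f_{\mu^{\ast}\lambda}$ for every $\lambda\geq\lambda_2$. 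Choosing $\lambda^{\ast}\geq\lambda,\lambda_1,\lambda_2$ yields
\[
U\circ p_{\lambda\lambda^{\ast}}=v\circ f_{\mu\lambda^{\ast}}=v\circ q_{\mu\mu^{\ast}}\circ f_{\mu^{\ast}\lambda^{\ast}}=g_{\nu'}\circ q_{\psi(\nu')\mu^{\ast}}\circ f_{\mu^{\ast}\lambda^{\ast}}=g_{\nu'}\circ f_{\psi(\nu')\lambda^{\ast}}=h_{\nu'\lambda^{\ast}},
\]
which is the second commutativity required by strong movability.

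The only real obstacle is the bookkeeping of indices: one must simultaneously absorb the index $\mu^{\ast}$ coming from the strong movability of $(g_\nu,\psi)$ into the inverse-system compatibility of $(f_\mu,\phi)$ at both $\mu\leq\mu^{\ast}$ and $\psi(\nu')\leq\mu^{\ast}$. Once $\lambda^{\ast}$ is chosen cofinally above the thresholds produced by these two applications, everything collapses by the identities $f_{\cdot\lambda^{\ast}}=f_{\cdot}\circ p_{\phi(\cdot)\lambda^{\ast}}$ and $h_{\nu'\lambda^{\ast}}=g_{\nu'}\circ f_{\psi(\nu')\lambda^{\ast}}$.
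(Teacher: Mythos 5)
Your proposal is correct and follows essentially the same route as the paper: reuse the setup of Theorem~\ref{thm.2.6} (the index $\lambda$ with $f_{\psi(\nu)}\circ p_{\phi(\psi(\nu))\lambda}=q_{\psi(\nu)\mu}\circ f_{\mu}\circ p_{\phi(\mu)\lambda}$ and the factoring morphism $U=v\circ f_{\mu\lambda}$), then use the extra relation $v\circ q_{\mu\mu^\ast}=g_{\nu'\mu^\ast}$ from strong movability and push $\lambda^\ast$ high enough that the two compatibility relations of $(f_\mu,\phi)$ at $\mu\leq\mu^\ast$ and $\psi(\nu')\leq\mu^\ast$ hold. Your explicit threshold indices $\lambda_1,\lambda_2$ merely spell out the index bookkeeping that the paper's choice ``$\lambda^\ast\geq\lambda,\phi(\mu^\ast),\chi(\nu)$'' leaves implicit (and which its notation garbles slightly), so the argument matches the published proof.
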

\begin{proof}
We use the notations and the results from the proof of Theorem \ref{thm.2.6}. Suppose that for the morphism $u:Y_{\mu'}\rightarrow Z_{\nu'}$ there exists $\mu^\ast\geq \mu,\psi(\nu')$ such that we have $u\circ q_{\mu\mu^\ast}=g_{\nu'\mu^\ast}$. Consider $\lambda^\ast\geq \lambda,\phi(\mu^\ast), \chi(\nu)=\phi(\psi(\nu))$. Then for the morphism $u'=u\circ f_{\mu}\circ p_{\phi(\mu)\lambda}: X_{\lambda'}\rightarrow Z_{\nu'}$ we have: $u'\circ p_{\lambda\lambda^\ast}=u\circ f_{\mu}\circ p_{\phi(\mu)\lambda}\circ p_{\lambda\lambda^\ast}=u\circ f_{\mu}\circ p_{\phi(\mu)\lambda^\ast}=u\circ q_{\mu'\mu^\ast}\circ f_{\mu^\ast}\circ p_{\phi(\mu^\ast)\lambda^\ast}=g_{\nu''\mu^\ast}\circ f_{\mu^\ast}\circ p_{\phi(\mu^\ast)\lambda^\ast}=g_{\nu'}\circ q_{\psi(\nu')\mu^\ast}\circ f_{\mu^\ast}\circ p_{\phi(\mu^\ast)\lambda^\ast}=g_{\nu'}\circ f_{\psi(\nu')}\circ p_{\phi(\psi(\nu'))\lambda^\ast}=h_{\nu'\lambda^\ast}$.
\end{proof}

\begin{prop}\label{prop.2.22}

Let $\mathbf{X}=(X_\lambda,p_{\lambda\lambda'},\Lambda)$, $\mathbf{Y}=(Y_\mu,q_{\mu\mu'},M)$, $\mathbf{Z}=(Z_\nu,r_{\nu\nu'},N)$ be inverse systems in the category $\mathcal{C}$ and $(f_\mu,\phi):\mathbf{X}\rightarrow \mathbf{Y}$ , $(g_\nu,\psi):\mathbf{Y}\rightarrow \mathbf{Z}$ morphisms of inverse systems. Suppose that $(f_\mu,\phi)$ is strongly movable. Then the composition $(h_\nu,\chi)=(g_\nu,\psi)\circ(f_\mu,\phi)$ is also a strongly movable morphism.

Particularly, if $\mathbf{X}=(X_\lambda,p_{\lambda\lambda'},\Lambda)$ is a strongly movable system, then any morphism $(f_\mu,\phi):\mathbf{X}\rightarrow
 \mathbf{Y} =(Y_\mu,q_{\mu\mu'},M)$ is a strongly movable morphism.
\end{prop}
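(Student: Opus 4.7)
The plan is to mimic the movable case (Theorem \ref{thm.2.8}), but to track one extra index so that the second strong-movability equation can be arranged. Given $\nu\in N$, let $\lambda\geq\phi(\psi(\nu))=\chi(\nu)$ be a strong movability index of $\psi(\nu)$ with respect to $(f_\mu,\phi)$. I claim $\lambda$ is a strong movability index of $\nu$ with respect to $(h_\nu,\chi)$.

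Fix $\nu'\geq\nu$. First, using the coherence of $(g_\nu,\psi)$ applied to $\nu\leq\nu'$, pick $\mu'\in M$ with $\mu'\geq\psi(\nu),\psi(\nu')$ and $r_{\nu\nu'}\circ g_{\nu'}\circ q_{\psi(\nu')\mu'}=g_\nu\circ q_{\psi(\nu)\mu'}$. Now apply the strong movability of $(f_\mu,\phi)$ to the pair $(\psi(\nu),\mu')$ with movability index $\lambda$; this yields some $\lambda_0\geq\lambda,\phi(\mu')$ and a morphism $u\colon X_\lambda\to Y_{\mu'}$ in $\mathcal{C}$ satisfying $q_{\psi(\nu)\mu'}\circ u=f_{\psi(\nu)\lambda}$ and $u\circ p_{\lambda\lambda_0}=f_{\mu'\lambda_0}$. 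Define
$$ U\;:=\;g_{\nu'}\circ q_{\psi(\nu')\mu'}\circ u\colon X_\lambda\to Z_{\nu'}. $$
The first equation $r_{\nu\nu'}\circ U=h_{\nu\lambda}$ is then obtained exactly as in the proof of Theorem \ref{thm.2.8}: combine the chosen coherence of $(g_\nu,\psi)$ with $q_{\psi(\nu)\mu'}\circ u=f_{\psi(\nu)\lambda}$ and unfold $h_{\nu}=g_\nu\circ f_{\psi(\nu)}$.

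The delicate point is the second equation $U\circ p_{\lambda\lambda^*}=h_{\nu'\lambda^*}$. Starting from $u\circ p_{\lambda\lambda_0}=f_{\mu'\lambda_0}$ one computes $U\circ p_{\lambda\lambda_0}=g_{\nu'}\circ q_{\psi(\nu')\mu'}\circ f_{\mu'\lambda_0}$, which is not manifestly equal to $h_{\nu'\lambda_0}=g_{\nu'}\circ f_{\psi(\nu')\lambda_0}$. However, the coherence of the morphism of inverse systems $(f_\mu,\phi)$ applied to $\psi(\nu')\leq\mu'$ provides an index $\lambda_1\geq\phi(\psi(\nu')),\phi(\mu')$ with $q_{\psi(\nu')\mu'}\circ f_{\mu'\lambda_1}=f_{\psi(\nu')\lambda_1}$. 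Choose $\lambda^*\in\Lambda$ with $\lambda^*\geq\lambda_0,\lambda_1$; this is possible by directedness, and automatically $\lambda^*\geq\phi(\psi(\nu'))=\chi(\nu')$. Post-composing $u\circ p_{\lambda\lambda_0}=f_{\mu'\lambda_0}$ with $p_{\lambda_0\lambda^*}$ upgrades the second strong-movability equation for $u$ to $u\circ p_{\lambda\lambda^*}=f_{\mu'\lambda^*}$, and then $U\circ p_{\lambda\lambda^*}=g_{\nu'}\circ q_{\psi(\nu')\mu'}\circ f_{\mu'\lambda^*}=g_{\nu'}\circ f_{\psi(\nu')\lambda^*}=h_{\nu'\lambda^*}$, as required.

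The main obstacle is exactly the one just described: the strong-movability index $\lambda_0$ coming from $(f_\mu,\phi)$ does not automatically satisfy the coherence identity relating $q_{\psi(\nu')\mu'}\circ f_{\mu'}$ and $f_{\psi(\nu')}$, so one must enlarge $\lambda_0$ along $\Lambda$ using the morphism-of-inverse-systems compatibility, while verifying that the enlargement preserves the relation $u\circ p_{\lambda\,\cdot}=f_{\mu'\,\cdot}$. The particular statement is then immediate: by Remark \ref{rem.2.20}, strong movability of $\mathbf{X}$ is equivalent to strong movability of $1_{\mathbf{X}}$, and writing $(f_\mu,\phi)=(f_\mu,\phi)\circ 1_{\mathbf{X}}$ we conclude by the general part.
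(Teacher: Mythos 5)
Your proof is correct and follows essentially the same route as the paper's: reuse the data $\mu'$, $u$, and $U=g_{\nu'}\circ q_{\psi(\nu')\mu'}\circ u$ from the proof of Theorem \ref{thm.2.8} and verify the second strong-movability relation $U\circ p_{\lambda\lambda^\ast}=h_{\nu'\lambda^\ast}$ after enlarging the index. The only difference is that you spell out the enlargement $\lambda^\ast\geq\lambda_0,\lambda_1$ needed so that both $u\circ p_{\lambda\lambda^\ast}=f_{\mu'\lambda^\ast}$ and the coherence identity $q_{\psi(\nu')\mu'}\circ f_{\mu'\lambda^\ast}=f_{\psi(\nu')\lambda^\ast}$ hold, a point the paper uses tacitly by simply assuming $\lambda^\ast\geq\lambda,\phi(\mu'),\phi(\psi(\nu'))$ is taken large enough.
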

\begin{proof}
We use the notations and the results from the proof of Theorem \ref{thm.2.8}.Suppose that for the morphism $u:X_{\lambda}\rightarrow Y_{\mu'}$ there exists $\lambda^\ast \geq\lambda,\phi(\mu'),\phi(\psi(\nu'))$,  such that $u\circ p_{\lambda\lambda^\ast}=f_{\mu'\lambda^\ast}$. Then for the morphism $U:X_{\lambda}\rightarrow Z_{\nu'}$ we have $U\circ p_{\lambda\lambda^\ast}=g_{\nu'}\circ q_{\psi(\nu')\mu'}\circ u\circ p_{\lambda\lambda^\ast}=g_{\nu'}\circ q_{\psi(\nu')\mu'}\circ f_{\mu'}\circ p_{\phi(\mu')\lambda^\ast}=g_{\nu'}\circ f_{\psi(\nu')}
\circ p_{\chi(\nu')\lambda^\ast}=h_{\nu'\lambda^\ast}$.
\end{proof}
\begin{prop}\label{prop.2.23}
Let $(f_\mu,\phi),(f'_\mu,\phi'):\mathbf{X}=(X_\lambda,p_{\lambda\lambda'},\Lambda)\rightarrow \mathbf{Y}=(Y_\mu,q_{\mu\mu'},M)$ be two equivalent morphisms of inverse systems, $(f_\mu,\phi)\sim (f'_\mu,\phi')$. If the morphism $(f_\mu,\phi)$ is strongly movable then the morphism $(f'_\mu,\phi')$ is also strongly movable.
\end{prop}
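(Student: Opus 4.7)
The plan is to adapt the proof of Proposition \ref{prop.2.10} and add the handling of the second strong-movability condition, which involves the equality $u\circ p_{\lambda\lambda^\ast}=f_{\mu'\lambda^\ast}$. As a preliminary observation I would verify (analogously to Remark \ref{rem.2.3}) that a strong movability index $\lambda$ for $\mu$ with respect to $(f_\mu,\phi)$ remains a strong movability index after enlarging: given $\widetilde{\lambda}\geq \lambda$, if $\lambda^\ast\geq \lambda,\phi(\mu')$ and $u:X_\lambda\to Y_{\mu'}$ witness strong movability for some $\mu'\geq \mu$, then $\widetilde{u}:=u\circ p_{\lambda\widetilde{\lambda}}$ paired with any $\widetilde{\lambda^\ast}\geq \lambda^\ast,\widetilde{\lambda}$ witnesses it for $\widetilde{\lambda}$, since $q_{\mu\mu'}\circ\widetilde{u}=f_{\mu\widetilde{\lambda}}$ and $\widetilde{u}\circ p_{\widetilde{\lambda}\widetilde{\lambda^\ast}}=u\circ p_{\lambda\widetilde{\lambda^\ast}}=f_{\mu'\widetilde{\lambda^\ast}}$.

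Next, given $\mu\in M$ with strong movability index $\lambda$ for $(f_\mu,\phi)$, I would invoke the equivalence $(f_\mu,\phi)\sim(f'_\mu,\phi')$ at the index $\mu$ to obtain $\lambda'\geq \lambda,\phi(\mu),\phi'(\mu)$ with $f_{\mu\lambda'}=f'_{\mu\lambda'}$. By the observation above, $\lambda'$ is still a strong movability index of $\mu$ with respect to $(f_\mu,\phi)$, and I claim it is also one with respect to $(f'_\mu,\phi')$. Fix $\mu'\geq \mu$. Strong movability of $(f_\mu,\phi)$ furnishes $\lambda^\ast\geq \lambda',\phi(\mu')$ and a morphism $u:X_{\lambda'}\to Y_{\mu'}$ with
\[
q_{\mu\mu'}\circ u=f_{\mu\lambda'}\quad\text{and}\quad u\circ p_{\lambda'\lambda^\ast}=f_{\mu'\lambda^\ast}.
\]
The first identity immediately yields $q_{\mu\mu'}\circ u=f'_{\mu\lambda'}$ via the equality $f_{\mu\lambda'}=f'_{\mu\lambda'}$.

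The remaining task, and the only genuine obstacle, is arranging the second strong movability condition $u\circ p_{\lambda'\lambda^{\ast\ast}}=f'_{\mu'\lambda^{\ast\ast}}$ for an appropriate $\lambda^{\ast\ast}$. For this I would apply the equivalence relation a second time, now at the index $\mu'$, to get $\lambda^{\dagger}\geq \phi(\mu'),\phi'(\mu')$ with $f_{\mu'\lambda^{\dagger}}=f'_{\mu'\lambda^{\dagger}}$, and then pick $\lambda^{\ast\ast}\geq \lambda^\ast,\lambda^{\dagger},\phi'(\mu')$ using cofiniteness of $\Lambda$. Composing with $p_{\lambda^{\dagger}\lambda^{\ast\ast}}$ gives $f_{\mu'\lambda^{\ast\ast}}=f'_{\mu'\lambda^{\ast\ast}}$, so
\[
u\circ p_{\lambda'\lambda^{\ast\ast}}=u\circ p_{\lambda'\lambda^\ast}\circ p_{\lambda^\ast\lambda^{\ast\ast}}=f_{\mu'\lambda^\ast}\circ p_{\lambda^\ast\lambda^{\ast\ast}}=f_{\mu'\lambda^{\ast\ast}}=f'_{\mu'\lambda^{\ast\ast}},
\]
which is the desired identity. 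Thus $\lambda'$ is a strong movability index of $\mu$ with respect to $(f'_\mu,\phi')$, completing the proof. The subtlety worth flagging is that the equivalence must be invoked twice, once at $\mu$ to align the $f_{\mu\lambda'}$-term, and once at $\mu'$ to align the $f_{\mu'\lambda^\ast}$-term, with $\lambda^{\ast\ast}$ chosen above both stabilising indices.
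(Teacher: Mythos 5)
Your proof is correct and follows essentially the same route as the paper: invoke the equivalence at $\mu$ to get $f_{\mu\lambda'}=f'_{\mu\lambda'}$ (as in Proposition \ref{prop.2.10}) for the first condition, then invoke it again at $\mu'$ and pass to an index above $\lambda^\ast$ where $f_{\mu'}$ and $f'_{\mu'}$ agree to obtain the second condition. Your preliminary check that a strong movability index survives enlargement is a point the paper leaves implicit, but otherwise the two arguments coincide.
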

\begin{proof}
We use the notations and the results from the proof of Proposition \ref{prop.2.10}. Suppose that for the morphism $u:X_{\lambda'}\rightarrow Y_{\mu'}$ there exists $\lambda^\ast\geq \lambda,\phi(\mu')$ such that $u\circ p_{\lambda\lambda^\ast} =f_{\mu'\lambda^\ast}$. Consider $\overline{\lambda^\ast}
\geq \phi(\mu'),\phi'(\mu'),\lambda^\ast$ such that $f_\mu'\circ p_{\phi(\mu')\overline{\lambda^\ast}}=f'_{\mu'}\circ p_{\phi'(\mu')\overline{\lambda^\ast}}$. Then we have
$u\circ p_{\lambda\overline{\lambda^\ast}} =f'_{\mu'\overline{\lambda^\ast}}$.
\end{proof}
Thanks to Proposition \ref{prop.2.23} we can give the following definition.
\begin{defn}\label{defn.2.24}
A morphism in a pro-category pro-$\mathcal{C}$, $\mathbf{f}:\mathbf{X}\rightarrow \mathbf{Y}$,  is called \textsl{strongly } \textsl{movable} if $\mathbf{f}$ admits a representation $(f_\mu,\phi):\mathbf{X}\rightarrow \mathbf{Y}$ which is strongly movable.
\end{defn}

\begin{rem}\label{rem.2.25}
Using Propositions \ref{prop.2.21} and \ref{prop.2.22} one can prove Corollaries \ref{cor.2.7}, \ref{cor.2.9} and \ref{cor.2.13} replacing the property of movability by the property of strong movability for pro-morphisms.
\end{rem}
Now for to give an important class of movable morphisms we recall the following definition(\cite{Mard-Segal2},Ch.II, \S 6.1, p.104). Let $\mathcal{C}_0$ be a full subcategory of the category $\mathcal{C}$. An inverse system $\mathbf{X}=(X_\lambda,p_{\lambda\lambda'},\Lambda)\in pro-\mathcal{C}$ is $\mathcal{C}_0$-\textsl{movable} provided each $\lambda \in \Lambda$ admits a $\lambda'\geq \lambda$ such that for any $\lambda''\geq \lambda$, for any object $X_0\in \mathcal{C}_0$, and any morphism $h:X_0\rightarrow X_{\lambda'}$ in $\mathcal{C}$ there exists a morphism $r:X_0\rightarrow X_{\lambda''}$ in $\mathcal{C}$ such that
$$ p_{\lambda\lambda''}\circ r=p_{\lambda\lambda'}\circ h.$$
It follows immediately that the movability in $pro-\mathcal{C}$ implies $\mathcal{C}_0$-movability for every $\mathcal{C}_0$. The following proposition generalizes Corollary \ref{cor.2.7}.
\begin{prop}\label{prop.2.26}Let $\mathcal{C}_0$ be a full subcategory of the category $\mathcal{C}$ and let $\mathbf{Y}=(Y_\mu,q_{\mu\mu'},M)\in pro- \mathcal{C}$ be a $\mathcal{C}_0$-movable system. Then if $\mathbf{X}=(X_\lambda,p_{\lambda\lambda'},\Lambda)\in pro-\mathcal{C}_0$, any morphism of inverse systems $(f_\mu,\phi):\mathbf{X}\rightarrow \mathbf{Y}$ is movable.
\end{prop}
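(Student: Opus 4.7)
The plan is to reduce the movability condition for $(f_\mu,\phi)$ at an index $\mu$ to a single application of the $\mathcal{C}_0$-movability of $\mathbf{Y}$, exploiting the crucial hypothesis that every object $X_\lambda$ of $\mathbf{X}$ lies in $\mathcal{C}_0$. The candidate movability index $\lambda$ will be chosen by first extracting a $\mathcal{C}_0$-movability index $\mu^*\geq\mu$ from $\mathbf{Y}$, and then using the commutativity built into the very definition of a morphism of inverse systems to make the two index maps fit.

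First, given $\mu\in M$, invoke the $\mathcal{C}_0$-movability of $\mathbf{Y}$ to pick $\mu^*\geq\mu$ in $M$ playing the role of a $\mathcal{C}_0$-movability index of $\mu$. Since $(f_\mu,\phi):\mathbf{X}\to\mathbf{Y}$ is a morphism of inverse systems and $\mu^*\geq\mu$, by the definition of such a morphism (\cite{Mard-Segal2},Ch.I,\S1.1) there exists $\lambda\in\Lambda$ with $\lambda\geq\phi(\mu),\phi(\mu^*)$ such that
\[
q_{\mu\mu^*}\circ f_{\mu^*\lambda}=f_{\mu\lambda}.
\]
I claim this $\lambda$ serves as a movability index of $\mu$ with respect to $(f_\mu,\phi)$.

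To verify the claim, take an arbitrary $\mu'\geq\mu$ in $M$ and consider the morphism $h:=f_{\mu^*\lambda}:X_\lambda\to Y_{\mu^*}$ in $\mathcal{C}$. Since $\mathbf{X}\in\text{pro-}\mathcal{C}_0$ we have $X_\lambda\in\mathcal{C}_0$, so $h$ is exactly the kind of test morphism allowed by the $\mathcal{C}_0$-movability definition. Applying that property with the index $\mu'$ and the morphism $h$ yields a morphism $u:X_\lambda\to Y_{\mu'}$ in $\mathcal{C}$ satisfying
\[
q_{\mu\mu'}\circ u=q_{\mu\mu^*}\circ h=q_{\mu\mu^*}\circ f_{\mu^*\lambda}.
\]
Combining with the identity established in the previous step, we obtain $q_{\mu\mu'}\circ u=f_{\mu\lambda}$, which is precisely the movability condition (2.2) for $(f_\mu,\phi)$.

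The only nontrivial observation in the argument is the choice of the test morphism $h$: one has to recognize that feeding $f_{\mu^*\lambda}$ itself into the $\mathcal{C}_0$-movability of $\mathbf{Y}$ is what converts a statement about lifts of arbitrary morphisms from $\mathcal{C}_0$-objects into a statement about the morphism $(f_\mu,\phi)$. After that, the rest is straightforward bookkeeping with the index map $\phi$ and the bonding morphisms, and the proof specializes to Corollary~\ref{cor.2.7} when $\mathcal{C}_0=\mathcal{C}$.
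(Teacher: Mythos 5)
Your proof is correct and follows essentially the same route as the paper's: both choose the $\mathcal{C}_0$-movability index $\mu^*$ of $\mu$ in $\mathbf{Y}$, pick $\lambda\geq\phi(\mu),\phi(\mu^*)$ via the compatibility condition of the morphism of inverse systems, and then lift through the $\mathcal{C}_0$-movability property to obtain $u$ with $q_{\mu\mu'}\circ u=f_{\mu\lambda}$. The only (inessential) difference is that the paper uses $f_{\mu^*}$ with source $X_{\phi(\mu^*)}$ as the test morphism and precomposes the resulting lift with $p_{\phi(\mu^*)\lambda}$, while you precompose first and test with $f_{\mu^*\lambda}:X_\lambda\rightarrow Y_{\mu^*}$.
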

\begin{proof}For $\mu\in M$ let $\mu'$ be as in the above definition. Then for $X_{\phi(\mu')}\in \mathcal{C}_0$ and $f_{\mu'}:X_{\phi(\mu')}\rightarrow Y_{\mu'}$ and $q''\geq q$, in $M$, there exists $r:X_{\phi(\mu')}\rightarrow Y_{\mu''}$ such that
\begin{equation}
q_{\mu\mu''}\circ r=q_{\mu\mu'}\circ f_{\mu'} .
\end{equation}

Consider $\lambda\in \Lambda, \lambda\geq \phi(\mu),\phi(\mu')$ such that
\begin{equation}
 q_{\mu\mu'}\circ f_{\mu'}\circ p_{\phi(\mu')\lambda}=f_{\mu}\circ p_{\phi(\mu)\lambda}.
\end{equation}
Then we can verify that $\lambda$ is a movability index of $\mu$ with respect to the morphism $(f_\mu,\phi)$. Indeed if we compose at right the relation (2.11) by $p_{\phi(\mu')\lambda}$, by (2.12) we obtain $f_\mu\circ p_{\phi(\mu)\lambda}=q_{\mu\mu''}\circ r\circ p_{\phi(\mu')\lambda})$, i.e.,
$f_{\mu\lambda}=q_{\mu\mu''}\circ u$, for $u:=r\circ p_{\phi(\mu')\lambda}:X_{\lambda}\rightarrow Y_{\mu''}$.
\end{proof}
\begin{ex}\label{ex.2.27} Let $\mathbf{G}=(G_n,q_{nn+1})$ be an inverse sequence of groups defined as follows (\cite{Mard-Segal2} , Ch.II, \S 6.2, p. 166). Let $G_n=\mathbb{Z}_{2^n},n=1,2,...$, and $q_{nn+1}$ send the generator $[1]$ of $\mathbb{Z}_{2^{n+1}}$ to the generator $[1]$ of $\mathbb{Z}_{2^n}$. This system is not movable (see \cite{Mard-Segal2}, p.166), but since $q_{nn+1}$ are epimorphisms, $\mathbf{G}$ has the Mittag-Leffler property, and by Corollary 5 from \cite{Mard-Segal2}, Ch.II, \S 6.2,this system is movable with respect to free groups. Then by Proposition \ref{prop.2.26} it follows that any morphism $(f_n,\phi):\mathbf{F}=(F_\lambda,p_{\lambda\lambda'},\Lambda)\rightarrow \mathbf{G}$,  with $\mathbf{F}$ an inverse system of free groups, is a movable morphism.

More specifically. Consider the inverse sequence $\mathbf{F}=(F_n,p_{nn+1})$, with $F_n=\mathbb{Z}$ and $p_{nn+1}$ multiplication by 2. This is also a non-movable pro-group (cf.\cite{Mard-Segal2}, Ch.II, \S 6.1, p. 159). Denote  by $f_n=\pi_{2^n}:F_n=\mathbb{Z}\rightarrow G_n=\mathbb{Z}_{2^n}$ the canonical surjective group homomorphisms. The the sequence $(f_n)$ is a morphism of inverse systems. Indeed, for a morphism $g_{n,n+k}:G_{n+k}\rightarrow G_n$, if we take $m\geq 2n,n+k$, then $q_{nn+k}\circ f_{n+k}\circ p_{n+km}=f_n\circ p_{nm}$. And by the above this is a movable morphism. 
\end{ex}
\section{Uniformly movable pro-morphisms}
First we recall from \cite{Mard-Segal2}, Ch. II, \S 6.1 the following definition.
\begin{defn}\label{defn.3.1}
An object $\textbf{X}=(X_\lambda,p_{\lambda\lambda'},\Lambda)\in$ pro-$\mathcal{C}$ is called \textsl{uniformly movable} if every $\lambda\in \Lambda$ admits a $\lambda'\geq \lambda$(a \textsl{uniform movability index} of $\lambda$) such that there is a morphism $\mathbf{r}:X_{\lambda'}\rightarrow \mathbf{X}$ in pro-$\mathcal{C}$ satisfying
\begin{equation}
p_{\lambda\lambda'}=\mathbf{p}_\lambda \circ \mathbf{r}
\end{equation}
where $\mathbf{p}_\lambda:\mathbf{X}\rightarrow X_\lambda$ is the morphism of pro-$\mathcal{C}$ given by $1_\lambda:X_\lambda\rightarrow X_\lambda$, i.e., $\mathbf{p}_\lambda$ is the restriction of $\mathbf{X}$ to $X_\lambda$. Consequently, $p_{\lambda\lambda'}$ factors through $\mathbf{X}$.
\end{defn}
\begin{defn}\label{defn.3.2}
Let $\mathbf{X}=(X_\lambda,p_{\lambda\lambda'},\Lambda)$ and $\mathbf{Y}=(Y_\mu,q_{\mu\mu'},M)$ be inverse systems in a category $\mathcal{C}$ and $(f_\mu,\phi):\mathbf{X}\rightarrow \mathbf{Y}$ a morphism of inverse systems.
We say that the morphism $(f_\mu,\phi)$ is \textsl{uniformly} \textsl{movable} if every $\mu\in M$ admits $\lambda\in \Lambda, \lambda\geq
 \phi(\mu)$  (called a \textsl{uniform} \textsl{movability index} \textsl{of} $\mu$ \textsl{with respect to} $(f_\mu,\phi)$)such that there is a morphism of inverse systems $\mathbf{u}:X_{\lambda}\rightarrow \mathbf{Y}$ satisfying
\begin{equation}
 f_{\mu\lambda}=\mathbf{q}_\mu \circ \mathbf{u}
\end{equation}
i.e., the following diagram commutes

$$
 \xymatrix{
X_{\lambda} \ar[rr]^{f_{\mu\lambda}}\ar@{-->}[dr]_{\mathbf{u}} & & Y_{\mu}\\
& \mathbf{Y} \ar[ur]_{\mathbf{q}_{\mu}} &
 }
$$

where $\mathbf{q}_{\mu}:\mathbf{Y}\rightarrow Y_\mu$ is the morphism of inverse systems given by $1_\mu:Y_\mu\rightarrow Y_\mu$.
\end{defn}
\begin{rem}\label{rem.3.3}
If $\lambda$ is a uniform movability index, then any $\widetilde{\lambda}>\lambda$ has this property.

\end{rem}

\begin{rem}\label{rem.3.4}
Note that the morphism $\mathbf{u}:X_{\lambda}\rightarrow \mathbf{Y}$ determines for every $\mu_1\in M$ a morphism $u_{\mu_1}:X_{\lambda}\rightarrow Y_{\mu_1}$ in $\mathcal{C}$ such that for $\mu_1\leq \mu_2$ we have $q_{\mu_1\mu_2}\circ u_{\mu_2}=u_{\mu_1}$ and $u_\mu=f_{\mu\lambda}$. In particular, for $\mu'\in M$, $\mu'\geq \mu$, we have $q_{\mu\mu'}\circ u_{\mu'}=u_{\mu}=f_{\mu\lambda}$, so that uniform movability of morphisms implies movability.

\end{rem}
\begin{ex}\label{ex.3.5}
If $(X)$ is a rudimentary system in the category $\mathcal{C}$ then any morphism of inverse systems $(f_\mu):(X)\rightarrow \mathbf{Y}=(Y_\mu,q_{\mu\mu'},M)$ in the category $\mathcal{C}$ is uniformly movable because $ f_\mu\circ 1_{X}=\mathbf{q}_\mu\circ (f_\mu)$.More generally, if we consider a morphism $(f_\mu,\phi):\mathbf{X}=(X_\lambda,p_{\lambda\lambda'},\Lambda)\rightarrow \mathbf{Y}=(Y_\mu,q_{\mu\mu'},M)$ such that there exists $\lambda_M\in \Lambda$ satisfying $\lambda_M\geq \phi(\mu)$ for any $\mu\in M$, then $(f_\mu,\phi)$ is uniformly movable. Indeed, for an arbitrary index $\mu\in M$, a uniformly movable index  is $\lambda_M$, with $ f_{\mu\lambda_M}=\mathbf{q}_{\mu}\circ \mathbf{u}$, where $\mathbf{u}:X_{\lambda_M}\rightarrow \mathbf{Y}$ is defined by the components $u_{\mu'}:X_{\lambda_M}\rightarrow Y_{\mu'}$ given by $u_{\mu'}=
f_{\mu'\lambda_M}$.
\end{ex}
\begin{thm}\label{thm.3.6} An inverse system $\mathbf{X}=(X_\lambda,p_{\lambda\lambda'},\Lambda)$ is uniformly movable if and only if the identity morphism $1_{\mathbf{X}}$ is uniformly movable.
\end{thm}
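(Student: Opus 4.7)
My plan is to imitate the proof of Theorem \ref{thm.2.5}, since the statement for uniform movability is structurally identical to the one for plain movability, with the pro-morphism $\mathbf{u}$ replacing the individual morphism. The key observation is that when $1_{\mathbf{X}}$ is regarded as a morphism of inverse systems it has the form $(1_{X_\lambda},1_\Lambda)$, so the index function is $\phi = 1_\Lambda$, the components are $f_\mu = 1_{X_\mu}$, and for any $\lambda' \geq \mu$ the composition appearing in Definition \ref{defn.3.2} reduces to
\begin{equation*}
f_{\mu\lambda'} = 1_{X_\mu}\circ p_{\mu\lambda'} = p_{\mu\lambda'}.
\end{equation*}
Furthermore, since source and target of $1_{\mathbf{X}}$ are the same system, the projection $\mathbf{q}_\mu$ of Definition \ref{defn.3.2} coincides with $\mathbf{p}_\mu$ of Definition \ref{defn.3.1}. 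Once these identifications are in place the two conditions (3.1) and (3.2) become literally the same equation.

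For the forward direction, assume $\mathbf{X}$ is uniformly movable and fix $\mu \in \Lambda$ (viewed as an index for $1_{\mathbf{X}}$). Applying Definition \ref{defn.3.1} to $\mu$ I obtain $\lambda \geq \mu$ and a pro-morphism $\mathbf{r}:X_\lambda \rightarrow \mathbf{X}$ with $p_{\mu\lambda} = \mathbf{p}_\mu\circ\mathbf{r}$. Setting $\mathbf{u}:=\mathbf{r}$ and using $f_{\mu\lambda}=p_{\mu\lambda}$ and $\mathbf{q}_\mu=\mathbf{p}_\mu$, the relation $f_{\mu\lambda}=\mathbf{q}_\mu\circ\mathbf{u}$ of Definition \ref{defn.3.2} holds, so $\lambda$ is a uniform movability index of $\mu$ with respect to $1_{\mathbf{X}}$.

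For the converse, assume $1_{\mathbf{X}}$ is uniformly movable and fix $\lambda\in\Lambda$. By Definition \ref{defn.3.2} applied to $\mu = \lambda$ there is $\lambda'\geq\lambda$ and a morphism $\mathbf{u}:X_{\lambda'}\rightarrow\mathbf{X}$ such that $f_{\lambda\lambda'} = \mathbf{q}_\lambda\circ\mathbf{u}$. Substituting $f_{\lambda\lambda'}=p_{\lambda\lambda'}$ and $\mathbf{q}_\lambda=\mathbf{p}_\lambda$ gives $p_{\lambda\lambda'} = \mathbf{p}_\lambda\circ\mathbf{u}$, so $\lambda'$ is a uniform movability index of $\lambda$ for $\mathbf{X}$ in the sense of Definition \ref{defn.3.1}, witnessed by $\mathbf{r}:=\mathbf{u}$.

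There is no real obstacle here: the argument is a pure unwinding of definitions, and the only thing to verify carefully is that the two projections $\mathbf{p}_\lambda$ and $\mathbf{q}_\lambda$ coincide under the identification of source and target of $1_{\mathbf{X}}$, which is immediate from their definitions as the pro-morphisms induced by $1_{X_\lambda}$.
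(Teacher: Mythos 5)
Your proof is correct and follows essentially the same route as the paper: both arguments simply observe that under the identification $1_{\mathbf{X}}=(1_{X_\lambda},1_\Lambda)$ one has $f_{\lambda\lambda'}=p_{\lambda\lambda'}$ and $\mathbf{q}_\lambda=\mathbf{p}_\lambda$, so a uniform movability index in the sense of Definition \ref{defn.3.1} is one in the sense of Definition \ref{defn.3.2} and conversely. You merely make explicit the unwinding that the paper's proof states in one line.
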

\begin{proof}Let an inverse $\mathbf{X}=(X_\lambda,p_{\lambda\lambda'},\Lambda)$ be uniformly movable. Consider an arbitrary $\lambda\in\Lambda$. Note that a uniform movability index $\lambda'\geq \lambda$ is also a uniform movability index of $\lambda$ with respect to $1_\mathbf{X}:\mathbf{X}\rightarrow \mathbf{X}$.

Conversely, let $1_\mathbf{X}:\mathbf{X}\rightarrow \mathbf{X}$ be uniformly movable morphism. Note that for any $\lambda\in \Lambda$ a uniform movability index $\lambda'\geq \lambda $ of $\lambda$ with respect to $1_\mathbf{X}$ is also a uniform movability index for the inverse system $\mathbf{X}=(X_\lambda,p_{\lambda\lambda'},\Lambda)$.
\end{proof}
\begin{thm}\label{thm.3.7}
Let $\mathbf{X}=(X_\lambda,p_{\lambda\lambda'},\Lambda)$, $\mathbf{Y}=(Y_\mu,q_{\mu\mu'},M)$, $\mathbf{Z}=(Z_\nu,r_{\nu\nu'},N)$ be inverse systems in the category $\mathcal{C}$ and $(f_\mu,\phi):\mathbf{X}\rightarrow \mathbf{Y}$, $(g_\nu,\psi):\mathbf{Y}\rightarrow \mathbf{Z}$ morphisms of inverse systems. Then, if $(g_\nu,\psi)$ is uniformly movable, the composition $(h_\nu,\chi)=(g_\nu,\psi)\circ(f_\mu,\phi)$ is also a uniformly movable morphism.
\end{thm}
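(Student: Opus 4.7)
The plan is to follow the same general template used for the simple and strong movability composition results (Theorems \ref{thm.2.6} and \ref{prop.2.21}), but to carry the whole pro-morphism $\mathbf{v}:Y_\mu\rightarrow \mathbf{Z}$ supplied by the uniform movability of $(g_\nu,\psi)$ along the morphism $(f_\mu,\phi)$ and assemble from it a pro-morphism $\mathbf{w}:X_\lambda\rightarrow \mathbf{Z}$ witnessing the uniform movability of $(h_\nu,\chi)$.

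Fix $\nu\in N$. First I would apply the uniform movability of $(g_\nu,\psi)$ to obtain a uniform movability index $\mu\in M$, $\mu\geq \psi(\nu)$, and a morphism of inverse systems $\mathbf{v}:Y_\mu\rightarrow \mathbf{Z}$ with $g_{\nu\mu}=\mathbf{r}_\nu\circ \mathbf{v}$. By Remark \ref{rem.3.4} this is the same as a coherent family of morphisms $v_{\nu_1}:Y_\mu\rightarrow Z_{\nu_1}$, indexed by $\nu_1\in N$, satisfying $r_{\nu_1\nu_2}\circ v_{\nu_2}=v_{\nu_1}$ for $\nu_1\leq \nu_2$, together with the normalization $v_\nu=g_{\nu\mu}$.

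Next I would use that $(f_\mu,\phi)$ is a morphism of inverse systems to pick $\lambda\in \Lambda$ with $\lambda\geq \phi(\mu)$ and $\lambda\geq \phi(\psi(\nu))=\chi(\nu)$ satisfying the coherence relation
\begin{equation*}
q_{\psi(\nu)\mu}\circ f_{\mu\lambda}=f_{\psi(\nu)\lambda}.
\end{equation*}
I would then define $\mathbf{w}:X_\lambda\rightarrow \mathbf{Z}$ componentwise by $w_{\nu_1}:=v_{\nu_1}\circ f_{\mu\lambda}$. Coherence of $\mathbf{w}$ is immediate from coherence of $\mathbf{v}$: for $\nu_1\leq \nu_2$ one has $r_{\nu_1\nu_2}\circ w_{\nu_2}=r_{\nu_1\nu_2}\circ v_{\nu_2}\circ f_{\mu\lambda}=v_{\nu_1}\circ f_{\mu\lambda}=w_{\nu_1}$.

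Finally, I would verify $h_{\nu\lambda}=\mathbf{r}_\nu\circ \mathbf{w}$, i.e.\ $h_{\nu\lambda}=w_\nu$, by the chain
\begin{equation*}
w_\nu=v_\nu\circ f_{\mu\lambda}=g_{\nu\mu}\circ f_{\mu\lambda}=g_\nu\circ q_{\psi(\nu)\mu}\circ f_{\mu\lambda}=g_\nu\circ f_{\psi(\nu)\lambda}=g_\nu\circ f_{\psi(\nu)}\circ p_{\chi(\nu)\lambda}=h_{\nu\lambda},
\end{equation*}
which shows $\lambda$ is a uniform movability index of $\nu$ with respect to $(h_\nu,\chi)$. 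There is no real obstacle here beyond careful index bookkeeping; the only point that requires a moment of thought is realizing that the right object to pull back along $f_{\mu\lambda}$ is the whole pro-morphism $\mathbf{v}$ (rather than a single component as in Theorem \ref{thm.2.6}), and that pre-composition with an ordinary morphism $f_{\mu\lambda}:X_\lambda\rightarrow Y_\mu$ preserves the coherence that makes it a morphism of inverse systems.
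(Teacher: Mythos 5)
Your proof is correct and follows essentially the same route as the paper: take the pro-morphism $\mathbf{u}:Y_\mu\rightarrow\mathbf{Z}$ provided by the uniform movability of $(g_\nu,\psi)$, pre-compose it with $f_{\mu\lambda}$ for a suitably large $\lambda\geq\phi(\mu),\chi(\nu)$ (the paper writes this as $\mathbf{u}'=\mathbf{u}\circ f_{\mu\lambda}$, you write it componentwise), and verify $h_{\nu\lambda}=\mathbf{r}_\nu\circ\mathbf{u}'$ by the same index chase. Your explicit componentwise coherence check is just a spelled-out version of what the paper leaves implicit.
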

\begin{proof}
We use the notations from the proof of Theorem \ref{thm.2.6} replacing $r_{\nu\nu'}:Z_{\nu'}\rightarrow Z_{\nu}$ by $\mathbf{r}_\nu:\mathbf{Z}\rightarrow Z_\nu$ and $u:Y_{\mu}\rightarrow Z_{\nu'}$ by $\mathbf{u}:Y_{\mu}\rightarrow \mathbf{Z}$. Then we have $g_{\nu}\circ q_{\psi(\nu)\mu}=\mathbf{r}_\nu \circ \mathbf{u}$. And by defining $\mathbf{u}'=\mathbf{u}\circ f_{\mu\lambda}:X_{\lambda}\rightarrow \mathbf{Z}$, we obtain $\mathbf{r}_{\nu}\circ \mathbf{u}'=h_{\nu\lambda}$.
\end{proof}
\begin{cor}\label{cor.3.8}Let an inverse system $\mathbf{X}=(X_\lambda,p_{\lambda\lambda'},\Lambda)$ be arbitrary and  $\mathbf{Y}=(Y_\mu,q_{\mu\mu'},M)$
be uniformly movable. Then any morphism $(f_\mu,\phi):\mathbf{X}\rightarrow \mathbf{Y}$ is uniformly movable.
\end{cor}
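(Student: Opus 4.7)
The plan is to obtain this corollary as an immediate consequence of the two preceding results, Theorems \ref{thm.3.6} and \ref{thm.3.7}, exactly in parallel with how Corollary \ref{cor.2.7} is derived from Theorems \ref{thm.2.5} and \ref{thm.2.6} in the simple movability setting. The only subtlety is making sure that the categorical composition argument carries over verbatim to the uniform case, which it does because Theorem \ref{thm.3.7} is already phrased in exactly the form needed.

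Concretely, I would argue as follows. First, I write the morphism $(f_\mu,\phi)$ trivially as the composition
\begin{equation*}
(f_\mu,\phi)=1_{\mathbf{Y}}\circ (f_\mu,\phi),
\end{equation*}
with $1_{\mathbf{Y}}:\mathbf{Y}\to \mathbf{Y}$ the identity morphism of inverse systems. Next, I invoke Theorem \ref{thm.3.6}: since $\mathbf{Y}$ is uniformly movable by hypothesis, the identity morphism $1_{\mathbf{Y}}$ is a uniformly movable morphism. Finally, applying Theorem \ref{thm.3.7} with $(g_\nu,\psi)=1_{\mathbf{Y}}$ and the role of $(f_\mu,\phi)$ unchanged, the composition $1_{\mathbf{Y}}\circ (f_\mu,\phi)$ is uniformly movable, which is precisely the statement that $(f_\mu,\phi)$ is uniformly movable.

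There is essentially no obstacle here; the work has already been done in Theorems \ref{thm.3.6} and \ref{thm.3.7}. The only thing one might want to double-check is that no additional hypotheses on $\mathbf{X}$ (such as cofiniteness of $\Lambda$, which is standing throughout the paper) are silently being used in Theorem \ref{thm.3.7}, but since that theorem is stated for arbitrary $\mathbf{X}$, the conclusion applies in full generality. So the proof reduces to one line quoting Theorems \ref{thm.3.6} and \ref{thm.3.7}.
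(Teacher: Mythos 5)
Your proof is correct and is exactly the paper's own argument: factor $(f_\mu,\phi)=1_{\mathbf{Y}}\circ(f_\mu,\phi)$, note $1_{\mathbf{Y}}$ is uniformly movable by Theorem \ref{thm.3.6}, and conclude by Theorem \ref{thm.3.7}. Nothing further is needed.
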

\begin{proof}
Since $(f_\mu,\phi)=1_{\mathbf{Y}}\circ (f_\mu,\phi)$ and $1_Y:Y\rightarrow Y$ is uniformly movable by Theorem \ref{thm.3.6}, then $(f_\mu,\phi)$ is also uniformly movable according to Theorem \ref{thm.3.7}.
\end{proof}
\begin{thm}\label{thm.3.9}
Let $\mathbf{X}=(X_\lambda,p_{\lambda\lambda'},\Lambda)$, $\mathbf{Y}=(Y_\mu,q_{\mu\mu'},M)$, $\mathbf{Z}=(Z_\nu,r_{\nu\nu'},N)$ be inverse systems in the category $\mathcal{C}$ and $(f_\mu,\phi):\mathbf{X}\rightarrow \mathbf{Y}$, $(g_\nu,\psi):\mathbf{Y}\rightarrow \mathbf{Z}$ morphisms of inverse systems. Suppose that $(f_\mu,\phi)$ is uniformly movable. Then the composition $(h_\nu,\chi)=(g_\nu,\psi)\circ(f_\mu,\phi)$ is also a uniformly movable morphism.
\end{thm}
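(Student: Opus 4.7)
The plan is to mimic the structure of Theorem \ref{thm.3.7}, but now with the uniform movability condition sitting on the first factor $(f_\mu,\phi)$ rather than the second. The key observation is that the uniform movability of $(f_\mu,\phi)$ gives us a pro-morphism $\mathbf{u}:X_\lambda\to\mathbf{Y}$ out of a rudimentary system, and such a pro-morphism can be post-composed with the morphism of inverse systems $(g_\nu,\psi)$ to yield a pro-morphism $X_\lambda\to\mathbf{Z}$ that serves as the witness of uniform movability of $(h_\nu,\chi)$.

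More concretely, I would proceed as follows. Fix $\nu\in N$ and apply uniform movability of $(f_\mu,\phi)$ to the index $\psi(\nu)\in M$: this produces $\lambda\in\Lambda$ with $\lambda\geq\phi(\psi(\nu))=\chi(\nu)$ and a morphism $\mathbf{u}:X_\lambda\to\mathbf{Y}$ in pro-$\mathcal{C}$ satisfying
\begin{equation*}
f_{\psi(\nu)\lambda}=\mathbf{q}_{\psi(\nu)}\circ\mathbf{u}.
\end{equation*}
Concretely $\mathbf{u}$ is a compatible family $(u_{\mu_1})_{\mu_1\in M}$ with $u_{\psi(\nu)}=f_{\psi(\nu)\lambda}$. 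I would then claim that this same $\lambda$ is a uniform movability index of $\nu$ with respect to $(h_\nu,\chi)$.

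To supply the required morphism $\mathbf{U}:X_\lambda\to\mathbf{Z}$, I would simply set $\mathbf{U}=(g_\nu,\psi)\circ\mathbf{u}$; this composition is well defined in pro-$\mathcal{C}$, and its $\nu$-component is $U_\nu=g_\nu\circ u_{\psi(\nu)}$. Then
\begin{equation*}
\mathbf{r}_\nu\circ\mathbf{U}=U_\nu=g_\nu\circ u_{\psi(\nu)}=g_\nu\circ f_{\psi(\nu)\lambda}=g_\nu\circ f_{\psi(\nu)}\circ p_{\chi(\nu)\lambda}=h_\nu\circ p_{\chi(\nu)\lambda}=h_{\nu\lambda},
\end{equation*}
which is exactly the defining relation for uniform movability.

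The argument is essentially formal, so I do not expect a real obstacle; the only point to handle with some care is keeping straight that $\mathbf{u}$ is a pro-morphism out of a rudimentary system (so its post-composition with $(g_\nu,\psi)$ is given coordinatewise by $\nu_1\mapsto g_{\nu_1}\circ u_{\psi(\nu_1)}$) and that the compatibility relations making $\mathbf{U}$ a morphism of inverse systems follow immediately from those of $\mathbf{u}$ and of $(g_\nu,\psi)$. Once that bookkeeping is in place, the verification displayed above finishes the proof.
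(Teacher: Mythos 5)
Your proposal is correct and follows essentially the same route as the paper: take a uniform movability index $\lambda$ of $\psi(\nu)$ for $(f_\mu,\phi)$, post-compose the resulting pro-morphism $\mathbf{u}:X_\lambda\rightarrow\mathbf{Y}$ with $\mathbf{g}$ to get $\mathbf{U}:X_\lambda\rightarrow\mathbf{Z}$, and verify $\mathbf{r}_\nu\circ\mathbf{U}=g_\nu\circ f_{\psi(\nu)\lambda}=h_{\nu\lambda}$. The paper's proof is exactly this, so no further comment is needed.
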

\begin{proof}
Using the notations from the proof of Theorem \ref{thm.2.8}, there exists $\mathbf{u}:X_\lambda\rightarrow \mathbf{Y}$, such that $f_{\psi(\nu)\lambda}=\mathbf{q}_{\psi(\nu)}\circ \mathbf{u}$. Then for $\mathbf{U}:X_\lambda\rightarrow \mathbf{Z}$,$\mathbf{U}=\mathbf{g}\circ \mathbf{u}$, we have $h_{\nu\lambda}= g_\nu\circ f_{\psi(\nu)\lambda}=\mathbf{r}_\nu\circ \mathbf{U}$.
\end{proof}
\begin{cor}\label{cor.3.10}Let $\mathbf{X}=(X_\lambda,p_{\lambda\lambda'},\Lambda)$ be uniformly movable and $\mathbf{Y}=(Y_\mu,q_{\mu\mu'},M)$ be
arbitrary inverse system. Then any morphism $(f_\mu,\phi):\mathbf{X}\rightarrow \mathbf{Y}$ is uniformly movable.
\end{cor}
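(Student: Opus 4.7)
The plan is to reduce this corollary immediately to the two results already proved just above it, in exactly the same way that Corollary~\ref{cor.2.9} is deduced from Theorem~\ref{thm.2.8} and Theorem~\ref{thm.2.5}. There is nothing genuinely new to do; the point is to package the identity morphism on $\mathbf{X}$ as the uniformly movable factor and invoke the composition theorem.

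First I would write $(f_\mu,\phi)=(f_\mu,\phi)\circ 1_{\mathbf{X}}$, viewing this as the composition of the morphism of inverse systems $1_{\mathbf{X}}:\mathbf{X}\rightarrow\mathbf{X}$ with $(f_\mu,\phi):\mathbf{X}\rightarrow\mathbf{Y}$. Since $\mathbf{X}$ is uniformly movable by hypothesis, Theorem~\ref{thm.3.6} supplies that the identity morphism $1_{\mathbf{X}}$ is uniformly movable as a morphism of inverse systems. Then Theorem~\ref{thm.3.9}, applied with $1_{\mathbf{X}}$ in the role of $(f_\mu,\phi)$ and the given $(f_\mu,\phi)$ in the role of $(g_\nu,\psi)$, yields that the composition is uniformly movable.

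There is no real obstacle: the entire argument is the same bookkeeping step that closed Corollaries~\ref{cor.2.7}, \ref{cor.2.9}, and \ref{cor.3.8}. The only thing to be careful about is to invoke Theorem~\ref{thm.3.9} (uniform movability of the left factor) rather than Theorem~\ref{thm.3.7} (which would require uniform movability of the right factor), matching the hypothesis that it is the source $\mathbf{X}$, and not the target $\mathbf{Y}$, which is assumed uniformly movable. The proof therefore amounts to a single sentence citing Theorems~\ref{thm.3.6} and \ref{thm.3.9}.
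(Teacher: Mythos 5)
Your proposal is correct and coincides with the paper's own proof: the authors likewise write $(f_\mu,\phi)=(f_\mu,\phi)\circ 1_{\mathbf{X}}$, note that $1_{\mathbf{X}}$ is uniformly movable by Theorem~\ref{thm.3.6}, and conclude by Theorem~\ref{thm.3.9}. Your remark about choosing Theorem~\ref{thm.3.9} rather than Theorem~\ref{thm.3.7} is exactly the right point of care, and nothing further is needed.
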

\begin{proof}Since $(f_\mu,\phi)=(f_\mu,\phi)\circ 1_{\mathbf{X}}$ and $1_\mathbf{X}:\mathbf{X}\rightarrow \mathbf{X}$ is uniformly movable by Theorem
\ref{thm.3.6}, then $(f_\mu,\phi)$ is also uniformly movable according to Theorem \ref{thm.3.9}.
\end{proof}

\begin{prop}\label{prop.3.11}
Let $(f_\mu,\phi),(f'_\mu,\phi'):\mathbf{X}=(X_\lambda,p_{\lambda\lambda'},\Lambda)\rightarrow \mathbf{Y}=(Y_\mu,q_{\mu\mu'},M)$  be two equivalent morphisms of inverse systems, $(f_\mu,\phi)\sim (f'_\mu,\phi')$. If the morphism $(f_\mu,\phi)$ is uniformly movable then the morphism $(f'_\mu,\phi')$ is also uniformly movable.
\end{prop}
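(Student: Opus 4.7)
The plan is to mirror the strategy used in Proposition \ref{prop.2.10} (the movable case), adapting it to the uniform setting where the witness is a pro-morphism $\mathbf{u}:X_\lambda\to\mathbf{Y}$ rather than a single morphism into some $Y_{\mu'}$. The key observation is that Remark \ref{rem.3.3} lets us enlarge a uniform movability index freely, and this enlargement is precisely what is needed to combine a uniform movability witness for $(f_\mu,\phi)$ with the compatibility data coming from the equivalence relation $(f_\mu,\phi)\sim(f'_\mu,\phi')$.

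First I would fix an arbitrary $\mu\in M$ and pick a uniform movability index $\lambda\in\Lambda$ of $\mu$ with respect to $(f_\mu,\phi)$, together with its witness $\mathbf{u}:X_\lambda\to\mathbf{Y}$ satisfying $f_{\mu\lambda}=\mathbf{q}_\mu\circ\mathbf{u}$. Next I would use the equivalence relation between morphisms of inverse systems to obtain some $\lambda^\ast\in\Lambda$ with $\lambda^\ast\geq\phi(\mu),\phi'(\mu)$ and
\begin{equation*}
f_\mu\circ p_{\phi(\mu)\lambda^\ast}=f'_\mu\circ p_{\phi'(\mu)\lambda^\ast}.
\end{equation*}
Choose $\lambda'\in\Lambda$ with $\lambda'\geq\lambda$ and $\lambda'\geq\lambda^\ast$; then $\lambda'\geq\phi(\mu),\phi'(\mu)$ as well. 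Composing the above equality on the right with $p_{\lambda^\ast\lambda'}$ yields $f_{\mu\lambda'}=f'_{\mu\lambda'}$.

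By Remark \ref{rem.3.3} the enlarged index $\lambda'$ is still a uniform movability index of $\mu$ with respect to $(f_\mu,\phi)$; explicitly, the morphism $\mathbf{u}':=\mathbf{u}\circ p_{\lambda\lambda'}:X_{\lambda'}\to\mathbf{Y}$ satisfies $\mathbf{q}_\mu\circ\mathbf{u}'=f_{\mu\lambda'}$. Combining this with $f_{\mu\lambda'}=f'_{\mu\lambda'}$ gives
\begin{equation*}
f'_{\mu\lambda'}=\mathbf{q}_\mu\circ\mathbf{u}',
\end{equation*}
which is exactly the condition that $\lambda'$ is a uniform movability index of $\mu$ with respect to $(f'_\mu,\phi')$, with witness $\mathbf{u}'$.

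There is really no serious obstacle here: once one notices that the equivalence relation only affects the equality of the two compositions $f_{\mu\lambda'}$ and $f'_{\mu\lambda'}$ at a sufficiently large index, the same witness $\mathbf{u}'$ serves both morphisms. The only point requiring a small amount of care is ensuring that $\lambda'$ is taken large enough to simultaneously satisfy $\lambda'\geq\lambda$ (so that Remark \ref{rem.3.3} applies) and $\lambda'\geq\lambda^\ast$ (so that the equality of compositions propagates), which is automatic since $\Lambda$ is directed.
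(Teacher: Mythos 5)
Your proof is correct and follows essentially the same route as the paper: enlarge the uniform movability index so that the equivalence relation gives $f_{\mu\lambda'}=f'_{\mu\lambda'}$, then reuse the uniform movability witness (your $\mathbf{u}'=\mathbf{u}\circ p_{\lambda\lambda'}$) to conclude $f'_{\mu\lambda'}=\mathbf{q}_\mu\circ\mathbf{u}'$. You are merely more explicit than the paper in constructing the witness at the enlarged index via Remark \ref{rem.3.3}, which the paper leaves implicit.
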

\begin{proof}
We use the notations from the proof of Proposition \ref{prop.2.10}. For $\mu\in M$ consider $\lambda'\in \Lambda$  a uniform movability index with respect to $(f_\mu,\phi)$ such that the relation (2.3) holds. This means $f_{\mu\lambda'}=f'_{\mu\lambda'}$ and therefore $f'_{\mu\lambda'}=f_{\mu\lambda'}=\mathbf{q}_{\mu}\circ \mathbf{u}$.
\end{proof}

Thanks to Proposition \ref{prop.3.11} we can give the following definition.
\begin{defn}\label{defn.3.12}
A morphism in a pro-category pro-$\mathcal{C}$, $\mathbf{f}:\mathbf{X}\rightarrow \mathbf{Y}$,  is called \textsl{uniformly} \textsl{movable} if $\mathbf{f}$ admits a representation $(f_\mu,\phi):\mathbf{X}\rightarrow \mathbf{Y}$ which is uniformly movable.
\end{defn}

The next theorem follows from Theorems \ref{thm.3.7}, \ref{thm.3.9} and Corollaries \ref{cor.3.8}, \ref{cor.3.10}.
\begin{thm}\label{thm.3.13}A composition of an arbitrary pro-morphism at right or left with a uniformly movable pro-morphism is a uniformly movable pro-morphism. Particularly, if for a pro-morphism $\mathbf{f:}\mathbf{X}\rightarrow\mathbf{Y}$, one of $\mathbf{X}$ or $\mathbf{Y}$ is a uniformly movable system, then $\mathbf{f}$ is a uniformly movable pro-morphism.
\end{thm}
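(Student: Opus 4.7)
The plan is to lift the inverse-system results of Theorems~\ref{thm.3.7} and \ref{thm.3.9} to the pro-category level by choosing appropriate representatives. Let $\mathbf{f}:\mathbf{X}\to\mathbf{Y}$ and $\mathbf{g}:\mathbf{Y}\to\mathbf{Z}$ be pro-morphisms. Suppose $\mathbf{g}$ is uniformly movable; by Definition~\ref{defn.3.12} it admits a representative $(g_\nu,\psi):\mathbf{Y}\to\mathbf{Z}$ which is a uniformly movable morphism of inverse systems. Pick any representative $(f_\mu,\phi)$ of $\mathbf{f}$. The pro-composition $\mathbf{g}\circ\mathbf{f}$ is then represented by $(g_\nu,\psi)\circ(f_\mu,\phi)$, which is uniformly movable as a morphism of inverse systems by Theorem~\ref{thm.3.7}. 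Hence $\mathbf{g}\circ\mathbf{f}$ admits a uniformly movable representative and is therefore a uniformly movable pro-morphism. The symmetric case, where $\mathbf{f}$ is assumed uniformly movable, is handled in the same way by appealing to Theorem~\ref{thm.3.9} in place of Theorem~\ref{thm.3.7}.

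For the particular statement, suppose first that $\mathbf{Y}$ is a uniformly movable inverse system. By Theorem~\ref{thm.3.6}, the identity morphism $1_{\mathbf{Y}}:\mathbf{Y}\to\mathbf{Y}$ is a uniformly movable morphism of inverse systems, hence a uniformly movable pro-morphism. Writing $\mathbf{f}=1_{\mathbf{Y}}\circ\mathbf{f}$ and applying the first part of the theorem (or equivalently Corollary~\ref{cor.3.8} directly at the representative level, and then passing to equivalence classes) yields that $\mathbf{f}$ is uniformly movable. If instead $\mathbf{X}$ is uniformly movable, write $\mathbf{f}=\mathbf{f}\circ 1_{\mathbf{X}}$ and apply Corollary~\ref{cor.3.10} together with Theorem~\ref{thm.3.6} in the same manner.

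The only point requiring any care is that pro-morphisms are equivalence classes of morphisms of inverse systems, so one must verify that suitable representatives of the factors actually compose to a uniformly movable representative of the product. This is immediate from Proposition~\ref{prop.3.11}, which guarantees that the uniform movability property, being invariant under the equivalence $\sim$, is a well-defined property of the pro-morphism: whichever representative of $\mathbf{g}$ (or $\mathbf{f}$) we select inside its equivalence class, composition with an arbitrary representative of the other factor lands in a fixed equivalence class of morphisms of inverse systems, and the resulting morphism has a uniformly movable representative by the inverse-system composition theorems. There is no real obstacle beyond this bookkeeping; the proof is essentially a transcription, for the uniformly movable case, of the argument already used for Theorem~\ref{thm.2.12} in the plain movable case.
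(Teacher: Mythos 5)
Your proposal is correct and follows exactly the route the paper takes: the paper gives no separate argument, stating only that the theorem follows from Theorems~\ref{thm.3.7}, \ref{thm.3.9} and Corollaries~\ref{cor.3.8}, \ref{cor.3.10}, with Definition~\ref{defn.3.12} (justified by Proposition~\ref{prop.3.11}) supplying the passage from representatives to pro-morphisms. Your write-up simply makes that one-line derivation explicit, including the choice of representatives and the use of Theorem~\ref{thm.3.6} for the particular case.
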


\begin{cor}\label{cor.3.14}Let $\mathbf{X}=(X_\lambda,p_{\lambda\lambda'},\Lambda)$ and $\mathbf{Y}=(Y_\mu,q_{\mu\mu'},M)$ in pro-$\mathcal{\mathcal{C}}$. If $\mathbf{Y}$ is uniformly movable and $\mathbf{X}$ is dominated by $\mathbf{Y}$ in pro-$\mathcal{C}$, $\mathbf{X}\leq \mathbf{Y}$, then $\mathbf{X}$ is uniformly movable.
\end{cor}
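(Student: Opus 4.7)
The plan is to imitate verbatim the proof of Corollary \ref{cor.2.13}, with every instance of ``movable'' replaced by ``uniformly movable'' and the references updated to the uniform analogues established in Section 3.

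First, I would unpack the hypothesis $\mathbf{X}\leq \mathbf{Y}$: by definition of domination in pro-$\mathcal{C}$, there exist pro-morphisms $\mathbf{f}:\mathbf{X}\rightarrow \mathbf{Y}$ and $\mathbf{g}:\mathbf{Y}\rightarrow \mathbf{X}$ satisfying $\mathbf{g}\circ \mathbf{f}=1_{\mathbf{X}}$. Next, I would invoke Theorem \ref{thm.3.13}: since $\mathbf{Y}$ is uniformly movable, any pro-morphism having $\mathbf{Y}$ as source or target is automatically uniformly movable; thus both $\mathbf{f}$ (which has target $\mathbf{Y}$) and $\mathbf{g}$ (which has source $\mathbf{Y}$) are uniformly movable pro-morphisms. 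Applying Theorem \ref{thm.3.13} once more to their composition, we conclude that $1_{\mathbf{X}}=\mathbf{g}\circ \mathbf{f}$ is a uniformly movable pro-morphism.

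To finish, I would promote this statement about the identity pro-morphism to a statement about the system $\mathbf{X}$ itself by invoking Theorem \ref{thm.3.6}, which asserts that $\mathbf{X}$ is uniformly movable if and only if $1_{\mathbf{X}}$ is. Combining these steps yields uniform movability of $\mathbf{X}$.

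There is no serious obstacle: the entire argument is a three-line chase through earlier results, and the substantive work has already been carried out in Proposition \ref{prop.3.11} (which makes Definition \ref{defn.3.12} of a uniformly movable pro-morphism well-defined), in Theorems \ref{thm.3.7} and \ref{thm.3.9} (stability of uniform movability under composition on either side), and in Theorem \ref{thm.3.6} (the identity-morphism criterion). The only mildly subtle point worth flagging is that Theorem \ref{thm.3.6} is phrased at the level of morphisms of inverse systems, whereas here we need it at the pro-level; but since $1_{\mathbf{X}}$ as a pro-morphism is represented by the identity morphism of inverse systems, Definition \ref{defn.3.12} immediately bridges the two formulations.
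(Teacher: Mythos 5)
Your proposal is correct and follows exactly the paper's own argument: domination gives $\mathbf{f}$ and $\mathbf{g}$ with $\mathbf{g}\circ\mathbf{f}=1_{\mathbf{X}}$, Theorem \ref{thm.3.13} makes $\mathbf{f}$, $\mathbf{g}$ and hence $1_{\mathbf{X}}$ uniformly movable, and Theorem \ref{thm.3.6} then yields uniform movability of $\mathbf{X}$. Your closing remark about passing between the pro-level and the level of representatives via Definition \ref{defn.3.12} is a reasonable clarification, but otherwise the argument coincides with the one in the paper.
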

\begin{proof}
By assumption there exist the pro-morphisms $\mathbf{f}:\mathbf{X}\rightarrow \mathbf{Y}$ and $\mathbf{g}:\mathbf{Y}\rightarrow \mathbf{X}$ such that $\mathbf{g}\circ \mathbf{f}=1_\mathbf{X}$. Because $\mathbf{Y}$ is uniformly movable, by Theorem \ref{thm.3.13} it follows that $\mathbf{f}$, $\mathbf{g}$ and $1_\mathbf{X}$ are uniformly movable pro-morphisms and hence, $\mathbf{X}$ is uniformly movable by Theorem \ref{thm.3.6}.
\end{proof}
\begin{rem}\label{rem.3.15}Corollary \ref{cor.3.14} is a new proof of an important result in the  theory of shape (see \cite{Mard-Segal2}, Theorem 2, Ch. II, \S 6.1).
\end{rem}

\begin{cor}\label{cor.3.16}
Let $\mathbf{f}:\mathbf{X}\rightarrow \mathbf{Y}$ be a pro-morphism with $\mathbf{X}$ an arbitrary inverse system and $\mathbf{Y}=(Y_n,q_{nn+1})$ an inverse sequence. Suppose that $\mathbf{f}$ is movable with a right inverse. Then $\mathbf{f}$ is uniformly movable.
\end{cor}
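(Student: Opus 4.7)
The plan is to work at the level of representatives. Choose a movable representative $(f_n,\phi)\colon\mathbf{X}\to\mathbf{Y}$ of $\mathbf{f}$ (which exists by Definition \ref{defn.2.11}) and a representative $(g_\lambda,\gamma)\colon\mathbf{Y}\to\mathbf{X}$ of the right inverse $\mathbf{g}$. The relation $\mathbf{f}\circ\mathbf{g}=1_{\mathbf{Y}}$, unpacked via the equivalence of morphisms of inverse systems, supplies for each $n\in\mathbb{N}$ an integer $\beta(n)\geq n,\gamma\phi(n)$ such that
\[
f_n\circ g_{\phi(n)}\circ q_{\gamma\phi(n),\beta(n)}=q_{n,\beta(n)}. \qquad (\star)
\]
This identity will be the engine of the argument: it lets us rewrite any composition $f_m\circ g_{\phi(m)}$, restricted past a high enough index, as an honest bonding map of~$\mathbf{Y}$.

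Fix $n\in\mathbb{N}$ and let $\lambda\geq\phi(n)$ be a movability index for $n$ with respect to $(f_n,\phi)$. I must produce $\mathbf{u}\colon X_\lambda\to\mathbf{Y}$ in pro-$\mathcal{C}$ with $\mathbf{q}_n\circ\mathbf{u}=f_{n\lambda}$; since the source $X_\lambda$ is rudimentary and the target is a sequence, this amounts to a strictly compatible family $(u_m)_{m\in\mathbb{N}}$ of morphisms $u_m\colon X_\lambda\to Y_m$ satisfying $q_{mm'}\circ u_{m'}=u_m$ for $m\leq m'$ and $u_n=f_{n\lambda}$. For $m\leq n$, put $u_m:=q_{mn}\circ f_{n\lambda}$, automatically compatible below~$n$. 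For $m\geq n$, build $u_m$ inductively: pick indices $K(m)\geq\max\{m,\gamma\phi(m),\beta(m)\}$ non-decreasing in $m$, and use the movability of $(f_n,\phi)$ to obtain lifts $v_{K(m)}\colon X_\lambda\to Y_{K(m)}$ with $q_{n,K(m)}\circ v_{K(m)}=f_{n\lambda}$, then set
\[
u_m:=f_m\circ g_{\phi(m)}\circ q_{\gamma\phi(m),K(m)}\circ v_{K(m)}.
\]
By $(\star)$ applied at $m$, this collapses to $u_m=q_{m,K(m)}\circ v_{K(m)}$, and in particular $u_n=q_{n,K(n)}\circ v_{K(n)}=f_{n\lambda}$, the required boundary condition.

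The main obstacle is verifying compatibility $q_{mm'}\circ u_{m'}=u_m$ for $m\leq m'$. Applying $(\star)$ at $m'$ rewrites $q_{mm'}\circ u_{m'}$ as $q_{m,K(m')}\circ v_{K(m')}$, so compatibility reduces to the equality $q_{m,K(m)}\circ v_{K(m)}=q_{m,K(m')}\circ v_{K(m')}$; equivalently, each successive lift $v_{K(m+1)}$ must refine $v_{K(m)}$ under $q_{K(m),K(m+1)}$ down to level~$m$. This is the crucial place where the inverse-sequence structure of $\mathbf{Y}$ and the existence of the right inverse cooperate: at each inductive step, movability of $(f_n,\phi)$ produces \emph{some} lift, and $(\star)$ together with Remark \ref{rem.3.3} (allowing $\lambda$ to be enlarged to absorb discrepancies) lets us replace that lift by one consistent with the previously constructed $u_m$'s. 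Once the family $(u_m)_{m\in\mathbb{N}}$ is built coherently, it defines the sought pro-morphism $\mathbf{u}$, showing $(f_n,\phi)$ to be a uniformly movable representative of $\mathbf{f}$; by Definition \ref{defn.3.12}, $\mathbf{f}$ is uniformly movable.
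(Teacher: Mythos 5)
There is a genuine gap, and it sits exactly where you place the phrase ``lets us replace that lift by one consistent with the previously constructed $u_m$'s.'' Movability of $(f_n,\phi)$ at the single index $n$ only produces lifts $v_{K(m)}\colon X_\lambda\to Y_{K(m)}$ satisfying $q_{n,K(m)}\circ v_{K(m)}=f_{n\lambda}$; two such lifts are forced to agree only after being pushed all the way down to level $n$, not at the intermediate levels $n\le m\le m'$, and the relation $(\star)$ cannot repair this, since it also only rewrites compositions of the form $f_m\circ g_{\phi(m)}\circ q_{\gamma\phi(m),\,\cdot}$ and says nothing about how two unrelated lifts compare at level $m$. The correction mechanism you invoke --- enlarging $\lambda$ via Remark \ref{rem.3.3} ``to absorb discrepancies'' --- is not available either: the family $(u_m)_{m\in\mathbb{N}}$ must all emanate from one fixed $X_\lambda$ (a pro-morphism $X_\lambda\to\mathbf{Y}$ from a rudimentary source is a strictly compatible family out of that single object), the bonding morphisms of $\mathbf{X}$ point the wrong way to transport maps out of $X_{\tilde\lambda}$ into maps out of $X_\lambda$, and an infinite induction would require enlarging $\lambda$ infinitely often. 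Patching non-compatible moves into a strictly compatible family is precisely the nontrivial content of the classical theorem that a \emph{movable inverse sequence} is uniformly movable, and there the induction works because the moves are morphisms between levels of $\mathbf{Y}$ itself, which can be composed and corrected at successively higher indices --- structure your setup does not have.

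The paper avoids this construction altogether: since $\mathbf{f}$ is movable and has a right inverse $\mathbf{g}$, the identity $1_{\mathbf{Y}}=\mathbf{f}\circ\mathbf{g}$ is movable by Theorem \ref{thm.2.12}, so $\mathbf{Y}$ is a movable system by Theorem \ref{thm.2.5}; because $\mathbf{Y}$ is an inverse sequence, the Marde\v{s}i\'{c}--Segal theorem (Ch.\ II, \S 6.2, Theorem 4) upgrades this to uniform movability of $\mathbf{Y}$, and then Theorem \ref{thm.3.13} (or Corollary \ref{cor.3.8}) gives uniform movability of $\mathbf{f}$. Note that your identity $(\star)$, combined with movability of $(f_n,\phi)$ at \emph{all} indices $m$ (not just at $n$), is essentially how one proves $\mathbf{Y}$ is movable; so the honest completion of your argument reconstructs the paper's route rather than bypassing it. As written, the proposal does not prove the corollary.
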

\begin{proof}
By Theorems \ref{thm.2.12}, \ref{thm.2.5}, $\mathbf{Y}$ is movable. Then by \cite{Mard-Segal2}, Ch. II, \S 6.2, Theorem 4, $\mathbf{Y}$ is uniformly movable and finally, by Theorem \ref{thm.3.13}  the pro-morphism $\mathbf{f}$ is uniformly movable.
\end{proof}
\begin{rem}\label{rem.3.17}
If $F:\mathcal{C}\rightarrow \mathcal{D}$ is a covariant functor and $\mathbf{f}=[(f_\mu,\phi)]:\mathbf{X}=(X_\lambda,p_{\lambda\lambda'},\Lambda)\rightarrow \mathbf{Y}=(Y_\mu,q_{\mu\mu'},M)$ is a morphism in pro-$\mathcal{C}$, then the morphism $F(\mathbf{f})=[(F(f_\mu),\phi)]:F(\mathbf{X})=(F(X_\lambda),F(p_{\lambda\lambda'}),\Lambda)\rightarrow F(\mathbf{Y})=(F(Y_\mu),F(q_{\mu\mu'}),M)$ is well defined and if $\mathbf{f}$ is (strongly, uniformly) movable then $F(\mathbf{f})$ is (strongly, uniformly) movable. For example, if $\mathbf{f}:(\mathbf{X},\mathbf{\ast})\rightarrow (\mathbf{Y},\mathbf{\ast})$ is a (strongly,uniformly) movable pro-morphism in HPol$_*$ then $\pi_1(\mathbf{f}):\pi_1(\mathbf{X},\mathbf{\ast})\rightarrow \pi_1(\mathbf{Y},\mathbf{\ast})$ is a (strongly, uniformly) movable pro-groups morphism.
\end{rem}
Now for to give an important class of uniformly movable morphisms we recall the following definition(\cite{Mard-Segal2},Ch.II, \S 6.1, p.104). Let $\mathcal{C}_0$ be a full subcategory of the category $\mathcal{C}$. An inverse system $\mathbf{X}=(X_\lambda,p_{\lambda\lambda'},\Lambda)\in pro-\mathcal{C}$ is $\mathcal{C}_0$-\textsl{uniformly} \textsl{movable} provided each $\lambda \in \Lambda$ admits a $\lambda'\geq \lambda$ such that for  any object $X_0\in \mathcal{C}_0$, and any morphism $h:X_0\rightarrow X_{\lambda'}$ in $\mathcal{C}$ there exists a morphism $\textbf{r}:X_0\rightarrow \mathbf{X}$ in $\mathcal{C}$ such that
$$ \mathbf{p}_{\lambda}\circ \mathbf{r}=p_{\lambda\lambda'}\circ \mathbf{h}.$$
It follows immediately that the uniform movability in $pro-\mathcal{C}$ implies $\mathcal{C}_0$-uniform movability for every $\mathcal{C}_0$.

The following proposition has the proof analogous to that of Proposition \ref{prop.2.26}.
\begin{prop}\label{pro.3.18} Let $\mathcal{C}_0$ be a full subcategory of the category $\mathcal{C}$ and let $\mathbf{Y}=(Y_\mu,q_{\mu\mu'},M)\in pro- \mathcal{C}$ be a $\mathcal{C}_0$-uniformly movable system. Then if $\mathbf{X}=(X_\lambda,p_{\lambda\lambda'},\Lambda)\in pro-\mathcal{C}_0$, any morphism of inverse systems $(f_\mu,\phi):\mathbf{X}\rightarrow \mathbf{Y}$ is uniformly movable.
\end{prop}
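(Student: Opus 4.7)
The plan is to follow Proposition \ref{prop.2.26} essentially verbatim, with the single change that the ``factored bond'' $r:X_{\phi(\mu')}\to Y_{\mu''}$ coming from $\mathcal{C}_0$-movability is upgraded to a pro-morphism $\mathbf{r}:X_{\phi(\mu')}\to \mathbf{Y}$ supplied by $\mathcal{C}_0$-uniform movability. The only conceptual input is the observation that the hypothesis $\mathbf{X}\in \text{pro-}\mathcal{C}_0$ guarantees $X_{\phi(\mu')}\in \mathcal{C}_0$, so that $\mathcal{C}_0$-uniform movability of $\mathbf{Y}$ can be applied to the bonding morphism $f_{\mu'}$.

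In more detail: given $\mu\in M$, I would first apply the $\mathcal{C}_0$-uniform movability of $\mathbf{Y}$ at $\mu$ to produce $\mu'\geq \mu$ with the following factorization property: for every $X_0\in \mathcal{C}_0$ and every morphism $h:X_0\to Y_{\mu'}$ in $\mathcal{C}$ there exists $\mathbf{r}:X_0\to \mathbf{Y}$ in pro-$\mathcal{C}$ satisfying $\mathbf{q}_\mu\circ \mathbf{r}=q_{\mu\mu'}\circ h$. Since $\mathbf{X}\in \text{pro-}\mathcal{C}_0$, I may take $X_0=X_{\phi(\mu')}$ and $h=f_{\mu'}$, obtaining $\mathbf{r}:X_{\phi(\mu')}\to \mathbf{Y}$ with $\mathbf{q}_\mu\circ \mathbf{r}=q_{\mu\mu'}\circ f_{\mu'}$.

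Next, using the defining property of the morphism of inverse systems $(f_\mu,\phi)$ applied to the pair $\mu\leq \mu'$, I would choose $\lambda\in \Lambda$ with $\lambda\geq \phi(\mu),\phi(\mu')$ such that
\[ f_\mu\circ p_{\phi(\mu)\lambda}=q_{\mu\mu'}\circ f_{\mu'}\circ p_{\phi(\mu')\lambda}, \]
that is, $f_{\mu\lambda}=q_{\mu\mu'}\circ f_{\mu'}\circ p_{\phi(\mu')\lambda}$. Setting $\mathbf{u}:=\mathbf{r}\circ p_{\phi(\mu')\lambda}:X_\lambda\to \mathbf{Y}$, one computes
\[ \mathbf{q}_\mu\circ \mathbf{u}=\mathbf{q}_\mu\circ \mathbf{r}\circ p_{\phi(\mu')\lambda}=q_{\mu\mu'}\circ f_{\mu'}\circ p_{\phi(\mu')\lambda}=f_{\mu\lambda}, \]
which is precisely the defining relation (3.2) exhibiting $\lambda$ as a uniform movability index of $\mu$ with respect to $(f_\mu,\phi)$. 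I do not foresee any real obstacle: the argument is a direct transcription of Proposition \ref{prop.2.26} in which the index $\mu''$ and the target $Y_{\mu''}$ are collectively replaced by the pro-object $\mathbf{Y}$ and the morphism $\mathbf{q}_\mu$.
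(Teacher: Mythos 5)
Your proof is correct and is exactly what the paper intends: the paper gives no explicit argument, stating only that the proof is analogous to Proposition \ref{prop.2.26}, and your write-up carries out precisely that analogy, replacing the morphism $r:X_{\phi(\mu')}\to Y_{\mu''}$ by the pro-morphism $\mathbf{r}:X_{\phi(\mu')}\to\mathbf{Y}$ furnished by $\mathcal{C}_0$-uniform movability and setting $\mathbf{u}=\mathbf{r}\circ p_{\phi(\mu')\lambda}$.
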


\section{Movable, strongly movable and uniformly movable shape morphisms}

At first we recall some notions on abstract shape theory (cf.\cite{Mard-Segal2}, Ch.I, \S 2.1).

Let $\mathcal{T}$ be a category and $\mathcal{P}$ a subcategory of $\mathcal{T}$. For an object $X\in \mathcal{T}$, a $\mathcal{T}$-\textsl{expansion} of $X$ (with respect to $\mathcal{P}$) is a morphism in pro-$\mathcal{T}$ of $X$  to an inverse system $\mathbf{X}=(X_\lambda,p_{\lambda\lambda'},\Lambda)$ in $\mathcal{T}$, $\mathbf{p}:X\rightarrow \mathbf{X}$ ($X$ viewed as a rudimentary system $(X)$), with the following universal property:

For any inverse system $\mathbf{Y}=(Y_\mu,q_{\mu\mu'},M)$ in the subcategory $\mathcal{P}$ (called a $\mathcal{P}$-\textsl{system})and any morphism $\mathbf{h}:X\rightarrow \mathbf{Y}$ in pro-$\mathcal{T}$, there exists a unique morphism $\mathbf{f}:\mathbf{X}\rightarrow \mathbf{Y}$ in pro-$\mathcal{T}$ such that $\mathbf{h}=\mathbf{f}\circ \mathbf{p}$ , i.e., the following diagram commutes.
$$
 \xymatrix{
& X \ar[d]_{\mathbf{p}}\ar[dr]^{\mathbf{h}} & \\
& {\mathbf{X}} \ar@{-->}[r]_{\mathbf{f}} &{\mathbf{Y}}
 }
$$
Is said that $\mathbf{p}$ is a $\mathcal{P}$-\textsl{expansion} of $X$ provided $\mathbf{X}$ and $\mathbf{f}$ are in pro-$\mathcal{P}$. This case is of particular interest in shape theory.

If $\mathbf{p}:X\rightarrow \mathbf{X}$, $\mathbf{p}':X\rightarrow \mathbf{X}'$ are two $\mathcal{P}$-expansion of the same object $X$, then there is a unique \textsl{natural} \textsl{isomorphism} $\mathbf{i}:\mathbf{X}\rightarrow \mathbf{X}'$ such that $\mathbf{i}\circ \mathbf{p}=\mathbf{p}'$.

Let $\mathcal{T}$ be a category and $\mathcal{P}$ a subcategory. Is said that $\mathcal{P}$ is \textsl{dense} in the category $\mathcal{T}$ provided every object $X\in \mathcal{T}$ admits a $\mathcal{P}$-expansion $\mathbf{p}:X\rightarrow \mathbf{X}$. In \cite{Mard-Segal2} Ch.I, \S 2.2, is proved a theorem which characterizes dense subcategories. For a pair $(\mathcal{T},\mathcal{P})$ consisting by a category $\mathcal{T}$ and a subcategory $\mathcal{P}$ and an object $X\in \mathcal{T}$, the \textsl{comma category of} $X$ \textsl{over} $\mathcal{P}$, denoted by $X_{\mathcal{P}}$ has as objects all morphism $f:X\rightarrow P$ in $\mathcal{T}$, $P\in \mathcal{P}$, and its morphisms $u:f\rightarrow f'$, where $f'\in \mathcal{T}(X,P'), P'\in \mathcal{P}$, are all morphisms $u:P'\rightarrow P$ of $\mathcal{P}$ such that $u\circ f'=f$.
$$
 \xymatrix{
& X \ar[dl]_{f'}\ar[dr]^{f} & \\
{P'} \ar[rr]_{u} &&{P}
 }
$$

\begin{thm}\label{thm.4.1}(\cite{Mard-Segal2}, Ch. I, \S 2.2, Theorem 2)
A subcategory $\mathcal{P}\subseteq \mathcal{T}$ is dense in the category $\mathcal{T}$ if and only if for every object $X\in \mathcal{T}$ the comma category $X_{\mathcal{P}}$ of $X$ over $P$ is cofiltered and cofinally small.
\end{thm}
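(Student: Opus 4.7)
The plan is to prove both implications by leveraging the universal property of the expansion.

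For the \emph{only if} direction, let $\mathbf{p}: X \to \mathbf{X} = (X_\lambda, p_{\lambda\lambda'}, \Lambda)$ be a $\mathcal{P}$-expansion. The components $p_\lambda: X \to X_\lambda$ are objects of $X_\mathcal{P}$, and the transition maps $p_{\lambda\lambda'}$ give arrows between them in $X_\mathcal{P}$, producing a small subcategory indexed by $\Lambda$. I would first show this subcategory is cofinal: given any $f: X \to P$ in $X_\mathcal{P}$, view $f$ as a pro-morphism $X \to (P)$ to the rudimentary system $(P) \in $ pro-$\mathcal{P}$; by the universal property, $f$ factors uniquely as $f = \mathbf{f} \circ \mathbf{p}$, and since the target is rudimentary, $\mathbf{f}$ is represented by a single morphism $f_\lambda: X_\lambda \to P$, giving $f = f_\lambda \circ p_\lambda$ and hence a morphism in $X_\mathcal{P}$ linking $f$ to $p_\lambda$. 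The common-object axiom on $X_\mathcal{P}$ then follows from cofinality combined with directedness of $\Lambda$. The equalization axiom requires more care: given a parallel pair $u, v: f \rightrightarrows f'$ with underlying arrows $u, v: P' \to P$ satisfying $u \circ f' = v \circ f' = f$, I would factor $f' = s \circ p_{\lambda_0}$ via cofinality, whence $(u \circ s) \circ p_{\lambda_0} = (v \circ s) \circ p_{\lambda_0}$; the uniqueness clause of the universal property applied to the pro-morphism $X \to (P)$ then forces the pro-morphisms represented by $u \circ s$ and $v \circ s$ to coincide, which means some $\lambda_1 \geq \lambda_0$ satisfies $u \circ (s \circ p_{\lambda_0 \lambda_1}) = v \circ (s \circ p_{\lambda_0 \lambda_1})$, giving an equalizing arrow in $X_\mathcal{P}$.

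For the \emph{if} direction, I would pick a small cofinal subcategory $\mathcal{K}$ of $X_\mathcal{P}$ and, by the standard reindexing procedure for cofiltered small categories used throughout Marde\v{s}i\'{c}--Segal (Ch.~I, \S1.2), replace it by a cofinite directed set $\Lambda$ parametrising a cofinal family $\{p_\lambda: X \to X_\lambda\}_{\lambda \in \Lambda}$, with transition morphisms $p_{\lambda\lambda'}: X_{\lambda'} \to X_\lambda$ extracted from chosen arrows in $X_\mathcal{P}$. This yields $\mathbf{X} = (X_\lambda, p_{\lambda\lambda'}, \Lambda) \in $ pro-$\mathcal{P}$ together with $\mathbf{p}: X \to \mathbf{X}$. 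To verify the universal property, given $\mathbf{h}: X \to \mathbf{Y} = (Y_\mu, q_{\mu\mu'}, M)$ in pro-$\mathcal{T}$ (with $\mathbf{Y}$ a $\mathcal{P}$-system) with components $h_\mu$, cofinality factors each $h_\mu$ as $h_\mu = g_\mu \circ p_{\phi(\mu)}$; I would then use the common-object axiom in $X_\mathcal{P}$ to arrange $\phi: M \to \Lambda$ monotone, and the equalization axiom to force the equality $q_{\mu\mu'} \circ g_{\mu'} \circ p_{\phi(\mu')\widetilde{\lambda}} = g_\mu \circ p_{\phi(\mu)\widetilde{\lambda}}$ at some $\widetilde{\lambda} \geq \phi(\mu), \phi(\mu')$ for each $\mu \leq \mu'$, which is precisely the compatibility required for $(g_\mu, \phi)$ to be a morphism of inverse systems. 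Uniqueness of $\mathbf{f}$ with $\mathbf{h} = \mathbf{f} \circ \mathbf{p}$ again rests on the equalization axiom.

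The principal obstacle lies in the \emph{if} direction: (i) passing from a cofinal small subcategory to a cofinite directed indexing set while preserving cofinality is a classical reindexing argument, technical but standard; (ii) assembling the scattered factorizations $h_\mu = g_\mu \circ p_{\phi(\mu)}$ into a globally coherent morphism of inverse systems with monotone $\phi$ and compatible $(g_\mu)$ requires repeated use of directedness and equalization in $X_\mathcal{P}$ to reconcile competing choices at different indices. By contrast, the \emph{only if} direction is essentially formal once one recognises that the universal property of the expansion directly encodes the cofinality and the equalization conditions on $X_\mathcal{P}$, the latter being precisely the usual ``approximate uniqueness'' axiom (M2) for expansions.
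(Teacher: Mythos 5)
The paper never proves this statement: Theorem \ref{thm.4.1} is recalled verbatim from Marde\v{s}i\'{c}--Segal (Ch.~I, \S 2.2, Theorem 2) as background for constructing the shape category, so there is no in-paper argument to compare against. Your sketch is correct in outline and follows exactly the strategy of the cited source --- deriving cofinality and the equalization property of $X_{\mathcal{P}}$ from the existence and uniqueness clauses of the universal property of a $\mathcal{P}$-expansion, and conversely reindexing a small cofinal subcategory of $X_{\mathcal{P}}$ by a cofinite directed set and verifying the factorization/equalization conditions (the (M1)/(M2)-type conditions of Marde\v{s}i\'{c}--Segal, Ch.~I, \S 2.1, Theorem 1) to obtain the universal property --- with the only substantive technical steps (the passage from a small cofiltered category to a directed index set, and the fact that the compatibility of the $(g_\mu)$ is obtained only after passing to a larger index via equalization; making $\phi$ monotone is unnecessary) flagged rather than carried out, which is acceptable for a proof at this level of detail.
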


Let $\mathcal{T}$ be an arbitrary category and $\mathcal{P}\subseteq \mathcal{T}$ a dense subcategory. Let $\mathbf{p}:X\rightarrow \mathbf{X}$ , $\mathbf{p}':X\rightarrow \mathbf{X}'$ be $\mathcal{P}$-expansions of $X\in \mathcal{T}$ and $\mathbf{q}:Y\rightarrow \mathbf{Y}$, $\mathbf{q}':Y\rightarrow \mathbf{Y}'$ $\mathcal{P}$-expansion of $Y\in \mathbf{T}$. Is said that morphisms $\mathbf{f}:\mathbf{X}\rightarrow \mathbf{Y}$ and $\mathbf{f}':\mathbf{X}'\rightarrow \mathbf{Y}'$ in pro-$\mathcal{P}$ are equivalent, $\mathbf{f}\sim \mathbf{f}'$, provided that if $\mathbf{i:X\rightarrow X'}$ and $\mathbf{j:Y\rightarrow Y'}$ are the natural isomorphisms above mentioned, the following diagram commutes:
\[\xymatrix{\mathbf{X}\ar[d]_{\mathbf{f}} \ar[r]^{\mathbf{i}} & {\mathbf{X}'}\ar[d]^{\mathbf{f}'}\\
{\mathbf{Y}}\ar[r]_{\mathbf{j}} & {\mathbf{Y}'}}\]

Now, if $\mathcal{T}$ is a category and $\mathcal{P}$ is a dense subcategory, then a \textsl{shape category} for $(\mathcal{T,P})$, denoted by $\mathbf{Sh}_{(\mathcal{T,P})}$, or simply by $\mathbf{Sh}$, has as objects all objects of $\mathcal{T}$ and morphisms $X\rightarrow Y$ of $\mathbf{Sh}$ are equivalent classes with respect to the relation $\sim $ of morphisms $\mathbf{f:X\rightarrow Y}$ in pro-$\mathcal{P}$, for $\mathbf{p}:X\rightarrow \mathbf{X}$,  $\mathbf{q}:Y\rightarrow \mathbf{Y}$ $\mathcal{P}$ -expansions. Consequently, a \textsl{shape morphism} $F:X\rightarrow Y$ is given by a symbolic diagram:
\[\xymatrix{\textbf{X}\ar[d]_{\textbf{f}}  & X\ar@{-->}[d]^{F}\ar[l]_{\textbf{p}}\\
\textbf{Y} & Y\ar[l]^{\mathbf{{q}}}}\]
Composition of shape morphisms $F:X\rightarrow Y$, $G:Y\rightarrow Z$ is defined by composing representatives $\mathbf{f:X\rightarrow Y}$ with $\mathbf{g:Y\rightarrow Z}$. As indicated above one can always find representatives $\mathbf{f}$ and $\mathbf{g}$ with the target of $\mathbf{f}$ equal to the source of $\mathbf{g}$. The identity shape morphism $1_X:X\rightarrow X$ is defined by $1_{\mathbf{X}}:\mathbf{X\rightarrow X}$. Finally, $\mathbf{Sh}(X,Y)$ is a set because (pro-$\mathcal{P}$)$(\mathbf{X,Y})$ is a set.
\begin{defn}\label{def.4.2}
An object $X$ of a shape category $\mathbf{Sh}_{(\mathcal{T,P})}$ is said to be \textsl{movable} (\textsl{strongly} \textsl{movable},\textsl{uniformly movable}) provided it admits a $\mathcal{P}$-expansion $ \mathbf{p}:X\rightarrow \mathbf{X}=(X_\lambda,p_{\lambda\lambda'},\Lambda)$ such that $\mathbf{X}$ is a movable (strongly movable, uniformly movable) inverse system.
\end{defn}
Based on Theorem \ref{thm.2.12}, with the variant for the strong movability property, and Theorem \ref{thm.3.13}, we can give the following definition.
\begin{defn}\label{def.4.3}
A shape morphism $F:X\rightarrow Y$ is movable (strongly movable, uniformly movable) if it can be represented by a movable (strongly movable, uniformly movable) pro-morphism $\mathbf{f:X\rightarrow Y}$.
\end{defn}

By the same theorems (\ref{thm.2.12}, \ref{thm.3.13}) we have the following comprehensive theorem.
\begin{thm}\label{thm.4.4}
A composition of an arbitrary shape morphism  at right or left with a movable (strongly movable, uniformly movable) shape morphism is a movable
(strongly movable, uniformly movable) shape morphism. Particularly, if for a shape morphism $F:X\rightarrow Y$, one of $X$ or $Y $ is a movable (strongly movable, uniformly movable) shape object, then $F$ is a movable (strongly movable, uniformly movable) shape morphism.
\end{thm}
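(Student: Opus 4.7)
The plan is to reduce everything to the pro-category statements already established, namely Theorem \ref{thm.2.12}, Proposition \ref{prop.2.21} (together with Remark \ref{rem.2.25}), and Theorem \ref{thm.3.13}, and then transport the conclusion across the equivalence relation that defines shape morphisms. All three flavors (movable, strongly movable, uniformly movable) run on the same template, so I would write the argument once in the movable case and indicate that the other two cases are verbatim with the appropriate references substituted.

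First I would fix $\mathcal{P}$-expansions $\mathbf{p}:X\rightarrow\mathbf{X}$, $\mathbf{q}:Y\rightarrow\mathbf{Y}$, $\mathbf{r}:Z\rightarrow\mathbf{Z}$ for the objects involved and represent the shape morphisms $F:X\rightarrow Y$ and $G:Y\rightarrow Z$ by pro-morphisms $\mathbf{f}:\mathbf{X}\rightarrow\mathbf{Y}$ and $\mathbf{g}:\mathbf{Y}\rightarrow\mathbf{Z}$, so that $G\circ F$ is represented by $\mathbf{g}\circ\mathbf{f}$. Suppose first that $G$ is movable as a shape morphism; by Definition \ref{def.4.3} it admits some movable representative $\mathbf{g}':\mathbf{Y}'\rightarrow\mathbf{Z}'$ relative to possibly different expansions. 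Using the natural isomorphisms $\mathbf{i}:\mathbf{Y}\rightarrow\mathbf{Y}'$ and $\mathbf{j}:\mathbf{Z}\rightarrow\mathbf{Z}'$ guaranteed by the uniqueness of $\mathcal{P}$-expansions, the composite $\mathbf{g}:=\mathbf{j}^{-1}\circ\mathbf{g}'\circ\mathbf{i}:\mathbf{Y}\rightarrow\mathbf{Z}$ also represents $G$, and it is movable by Theorem \ref{thm.2.12} since composing any pro-morphism with a movable one yields a movable pro-morphism. Applying Theorem \ref{thm.2.12} once more to $\mathbf{g}\circ\mathbf{f}$, I conclude that this composition is a movable pro-morphism, so $G\circ F$ is a movable shape morphism.

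The symmetric case, with $F$ movable instead of $G$, is entirely analogous: pick a movable representative for $F$ (transporting along natural isomorphisms as before), then invoke Theorem \ref{thm.2.12} to see that $\mathbf{g}\circ\mathbf{f}$ is movable on the grounds that its right factor is. For the strong-movability and uniform-movability versions, the same steps go through word for word, replacing Theorem \ref{thm.2.12} by the combined content of Propositions \ref{prop.2.21}--\ref{prop.2.22} (via Remark \ref{rem.2.25}) and by Theorem \ref{thm.3.13}, respectively; crucially, each of these results includes the statement that composing on either side with a pro-morphism of the corresponding type preserves that type, which is exactly what is needed both for the adjustment by natural isomorphisms and for the main composition.

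For the ``particularly'' clause, assume $X$ (respectively $Y$) is a movable shape object. By Definition \ref{def.4.2} there is a $\mathcal{P}$-expansion $\mathbf{p}:X\rightarrow\mathbf{X}$ (respectively $\mathbf{q}:Y\rightarrow\mathbf{Y}$) with movable system, so any representative $\mathbf{f}:\mathbf{X}\rightarrow\mathbf{Y}$ of a shape morphism $F:X\rightarrow Y$ has source (respectively target) a movable inverse system and is therefore a movable pro-morphism by the ``particularly'' part of Theorem \ref{thm.2.12}; hence $F$ itself is a movable shape morphism, and the analogous statements for strongly and uniformly movable follow from Propositions \ref{prop.2.21}--\ref{prop.2.22} and Theorem \ref{thm.3.13}. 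The only delicate point in the whole argument is bookkeeping the natural isomorphisms used to match targets and sources of representatives, but since these are themselves pro-isomorphisms, the stability results cited dispatch them immediately, so no genuine obstacle arises.
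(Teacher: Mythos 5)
Your proposal is correct and follows essentially the same route as the paper, which simply derives Theorem \ref{thm.4.4} from Theorems \ref{thm.2.12} and \ref{thm.3.13} (with Remark \ref{rem.2.25} for the strong case) together with Definitions \ref{def.4.2} and \ref{def.4.3}. You merely make explicit the transport of representatives along the natural isomorphisms between $\mathcal{P}$-expansions, a bookkeeping step the paper leaves implicit and which is indeed dispatched by the same composition theorems.
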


\begin{cor}\label{cor.4.5}
a) If in the category $\mathbf{Sh}_{(\mathcal{T,P})}$ there exists a movable (strongly movable, uniformly movable) shape morphism $F:X\rightarrow Y$ which admits a right inverse, then $Y$ is a movable (strongly movable, uniformly movable) shape object.

b) If $F:X\rightarrow Y$  is a movable (strictly movable, uniformly movable) shape isomorphism, then $X$ and $Y$  are movable (strongly movable, uniformly movable) shape objects and $F^{-1}$ is a movable (strongly movable, uniformly movable) shape morphism.

c) Consider a commutative diagram in $\mathbf{Sh}_{(\mathcal{T,P})}$
\[\xymatrix{X\ar[d]_{I} \ar[r]^{F} & Y\ar[d]^{J}\\
{X'}\ar[r]_{F'} & {Y'}}\]
with $I$ and $J$ shape isomorphisms and $F'$ a movable (strongly movable, uniformly movable) shape morphism. Then $F$ is movable (strongly movable, uniformly movable) shape morphism.
\end{cor}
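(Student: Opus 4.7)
All three parts are designed to be immediate corollaries of Theorem \ref{thm.4.4}, together with the shape-theoretic analogue of Theorems \ref{thm.2.5} and \ref{thm.3.6} (an object is movable / strongly movable / uniformly movable iff its identity shape morphism is, which follows from Definition \ref{def.4.2} by choosing a fixed expansion $\mathbf{p}\colon X\to\mathbf{X}$ and noting that $1_X$ in $\mathbf{Sh}$ is represented by $1_{\mathbf{X}}$). I will treat the three movability variants uniformly; the argument is identical in each case.

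For part (a), I would let $G\colon Y\to X$ be a right inverse of $F$, so that $F\circ G=1_{Y}$. Since $F$ is movable and $G$ is arbitrary, Theorem \ref{thm.4.4} gives that $F\circ G=1_{Y}$ is a movable shape morphism. By the identity-criterion for movability of shape objects mentioned above, this forces $Y$ to be a movable shape object.

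For part (b), the inverse $F^{-1}\colon Y\to X$ exists by hypothesis, and we have the two identities $F^{-1}\circ F=1_{X}$ and $F\circ F^{-1}=1_{Y}$. Applying Theorem \ref{thm.4.4} to each of these (composing the movable $F$ at left or right with the arbitrary $F^{-1}$) shows that both $1_{X}$ and $1_{Y}$ are movable shape morphisms, hence $X$ and $Y$ are both movable shape objects. Then $F^{-1}$ is a shape morphism whose source $Y$ is now known to be movable, so one more application of Theorem \ref{thm.4.4} (in the ``particularly'' form) yields that $F^{-1}$ is movable.

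For part (c), the commutativity of the square together with the invertibility of $I$ and $J$ gives $F=J^{-1}\circ F'\circ I$. By part (b) applied to the movable shape isomorphism $J$ (note: here we have $J$ a shape isomorphism but we do not yet know it is movable; however, what we need is only to compose with $J^{-1}$ as an arbitrary shape morphism), we may proceed as follows: since $F'$ is movable, Theorem \ref{thm.4.4} shows that $F'\circ I$ is movable, and then composing on the left with the arbitrary shape morphism $J^{-1}$ keeps movability, so $F=J^{-1}\circ F'\circ I$ is movable.

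The only conceptual point that requires care is the identity-criterion used in parts (a) and (b): this is where we translate ``$1_{Y}$ movable as a shape morphism'' into ``$Y$ movable as a shape object.'' This is essentially Theorem \ref{thm.2.5} (and its strong / uniform analogues \ref{thm.3.6} and Remark \ref{rem.2.20}) transported through Definition \ref{def.4.2} via a chosen $\mathcal{P}$-expansion; since the equivalence class of a morphism in pro-$\mathcal{P}$ is independent of the chosen representative (Propositions \ref{prop.2.10}, \ref{prop.2.23}, \ref{prop.3.11}), there is no ambiguity. I expect this to be the only place where a little verification beyond ``quote Theorem \ref{thm.4.4}'' is needed; everything else is a direct application.
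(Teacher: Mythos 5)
Your proof is correct and follows exactly the route the paper intends: Corollary~\ref{cor.4.5} is stated without an explicit proof as an immediate consequence of Theorem~\ref{thm.4.4}, applied just as you apply it, together with the shape-level identity criterion obtained from Definition~\ref{def.4.2} and Theorems~\ref{thm.2.5}, \ref{thm.3.6} (mirroring how Corollaries~\ref{cor.2.13} and \ref{cor.3.14} are deduced from Theorems~\ref{thm.2.12} and \ref{thm.3.13}). The point you flag about passing from movability of $1_Y$ as a shape morphism to movability of the object $Y$ is indeed the only step needing care, and your handling of it (uniqueness of representatives with respect to fixed expansions plus closure of movability under composition) is the same argument implicit in the paper.
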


Let $\mathbf{Sh}_{(\mathcal{T,P})}$ be a shape category, $f\in \mathcal{T}(X,Y)$ and $\mathbf{p}:X\rightarrow \mathbf{X}$, $\mathbf{q}:Y\rightarrow \mathbf{Y}$ $P$-expansions. Then $f$ induces a unique morphism $\mathbf{f}:\mathbf{X\rightarrow Y}$ in pro-$\mathcal{P}$ such that the following diagram commutes
\[\xymatrix{\textbf{X}\ar[d]_{\textbf{f}}  & X\ar[d]^{f}\ar[l]_{\textbf{p}}\\
\textbf{Y} & Y\ar[l]^{\mathbf{{q}}}}\]
If we take other $\mathcal{P}$-expansions $\mathbf{p}':X\rightarrow \mathbf{X}'$, $\mathbf{q}':Y\rightarrow \mathbf{Y}'$, there is another morphism $\mathbf{f}':\mathbf{X}'\rightarrow \mathbf{Y}'$ in pro-$\mathcal{P}$ such that $ \mathbf{f}'\circ \mathbf{p}'=\mathbf{q}'\circ f$. Then, we can write $\mathbf{i}\circ \mathbf{p}=\mathbf{p}'$, $\mathbf{j}\circ \mathbf{q}=\mathbf{q}'$ and this implies $(\mathbf{f}'\circ \mathbf{i})\circ \mathbf{p}=\mathbf{f}'\circ \mathbf{p}'=\mathbf{q}'\circ f=\mathbf{j}\circ \mathbf{q}\circ f=(\mathbf{j}\circ f)\circ \mathbf{p}$, which by uniqueness, yields $\mathbf{f}'\circ \mathbf{i}=\mathbf{j}\circ \mathbf{f}$, with $\mathbf{i},\mathbf{j}$ isomorphisms. Hence, $\mathbf{f}\sim \mathbf{f}'$. In this way $f\in \mathcal{T}(X,Y)$ defines a shape morphism, $S(f):=[\mathbf{f}]\in \mathbf{Sh}(X,Y)$, and by putting $S(X)=X$ it is obtained a covariant functor $S:\mathcal{T}\rightarrow \mathbf{Sh}_{(\mathcal{T,P})}$, called the \textsl{shape functor}.

\begin{defn}\label{def.4.6}

Let $\mathbf{Sh}_{(\mathcal{T,P})}$ be a shape category and let $f\in \mathcal{T}(X,Y)$ be a morphism in $\mathcal{T}$. Then we say that $f$ is \textsl{shape movable (strongly movable, uniformly movable) morphism }if $F:=S(f)\in \mathbf{Sh}_{(\mathcal{T,P})}(X,Y)$ is a movable (strongly movable, uniformly movable) shape morphism.
\end{defn}

\begin{cor}\label{cor.4.7}
Let $\mathbf{Sh}_{(\mathcal{T,P})}$ be a shape category.

a) A composition in the category $\mathcal{T}$ of an arbitrary morphism  at right or left with a movable (strongly movable, uniformly movable ) morphism is a movable(strongly movable, uniformly movable) morphism. Particularly, if for a morphism $f:X\rightarrow Y$, one of $X$ or $Y $ is a movable(strongly movable, uniformly movable) shape object, for example $X\in \mathcal{P}$ or $Y\in \mathcal{P}$, then $f$ is a movable (strongly movable, uniformly movable) morphism.

b) If in the category $\mathcal{T}$ there exists a movable (strongly movable, uniformly movable) morphism $f:X\rightarrow Y$ which admits a right inverse, then $Y$ is a movable (strongly movable, uniformly movable) shape object.

c)If $f:X\rightarrow Y$  is a movable(strongly movable, uniformly movable) isomorphism in $\mathcal{T}$, then $X$ and $Y$  are movable (strongly movable, uniformly movable) shape objects and $f^{-1}$ is a movable (strongly movable, uniformly movable) morphism.
\end{cor}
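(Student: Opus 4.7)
The plan is to reduce each of the three parts to the already established statements about shape morphisms by exploiting the covariant shape functor $S:\mathcal{T}\to \mathbf{Sh}_{(\mathcal{T,P})}$. By Definition \ref{def.4.6}, a morphism $f\in\mathcal{T}(X,Y)$ is (strongly, uniformly) movable precisely when $S(f)$ is a (strongly, uniformly) movable shape morphism, and functoriality of $S$ transports composites, identities and inverses from $\mathcal{T}$ into $\mathbf{Sh}_{(\mathcal{T,P})}$. Once this translation is in place, each of (a), (b), (c) will follow directly from Theorem \ref{thm.4.4} and Corollary \ref{cor.4.5}.

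For (a), I would take a movable (strongly, uniformly movable) morphism $f$ in $\mathcal{T}$ and an arbitrary composable $g$ or $h$, and compute $S(f\circ g)=S(f)\circ S(g)$ and $S(h\circ f)=S(h)\circ S(f)$. Since $S(f)$ is a movable (strongly, uniformly movable) shape morphism by hypothesis, Theorem \ref{thm.4.4} yields that the composite is again movable (strongly, uniformly movable) as a shape morphism, hence the original composite in $\mathcal{T}$ is movable (strongly, uniformly movable) by Definition \ref{def.4.6}. For the ``particularly'' clause, I would observe that any $X\in\mathcal{P}$ is a movable, strongly movable, and uniformly movable shape object in the sense of Definition \ref{def.4.2}: the identity $1_X$, viewed as a morphism from $X$ to the rudimentary system $(X)$ in pro-$\mathcal{P}$, is a $\mathcal{P}$-expansion of $X$, and a rudimentary system is trivially movable in all three senses since its unique index serves as its own movability (respectively strong or uniform movability) index. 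Theorem \ref{thm.4.4} then applies to $S(f)$, and the case $Y\in\mathcal{P}$ is symmetric.

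For (b), given a movable (strongly, uniformly movable) morphism $f:X\to Y$ with right inverse $g:Y\to X$ in $\mathcal{T}$, functoriality yields $S(f)\circ S(g)=S(1_Y)=1_Y$ in $\mathbf{Sh}_{(\mathcal{T,P})}$, exhibiting the movable (strongly, uniformly movable) shape morphism $S(f)$ as admitting a right inverse; Corollary \ref{cor.4.5}(a) then forces $Y$ to be a movable (strongly, uniformly movable) shape object. For (c), if $f:X\to Y$ is a movable (strongly, uniformly movable) isomorphism in $\mathcal{T}$ with two-sided inverse $f^{-1}$, functoriality of $S$ makes $S(f)$ a shape isomorphism with $S(f)^{-1}=S(f^{-1})$. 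Corollary \ref{cor.4.5}(b) then gives simultaneously that $X$ and $Y$ are movable (strongly, uniformly movable) shape objects and that $S(f^{-1})$ is a movable (strongly, uniformly movable) shape morphism; by Definition \ref{def.4.6} the latter says precisely that $f^{-1}$ is a movable (strongly, uniformly movable) morphism in $\mathcal{T}$.

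No serious obstacle is expected: the entire argument is a mechanical transfer across the shape functor $S$, once one observes that each object of $\mathcal{P}$ is automatically a (strongly, uniformly) movable shape object through its rudimentary expansion. The only care needed is to invoke the correct clause of Theorem \ref{thm.4.4} and of Corollary \ref{cor.4.5} in each of the three movability flavours and to keep track of the direction of composition in (a).
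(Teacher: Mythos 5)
Your proposal is correct and follows exactly the route the paper intends: the paper states Corollary \ref{cor.4.7} without proof, treating it as an immediate consequence of Theorem \ref{thm.4.4}, Corollary \ref{cor.4.5} and Definition \ref{def.4.6} via the functoriality of the shape functor $S$, which is precisely your argument. Your added observation that any object of $\mathcal{P}$ is a (strongly, uniformly) movable shape object through its rudimentary $\mathcal{P}$-expansion is a correct filling-in of the one detail the paper leaves implicit.
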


\begin{rem}\label{rem.4.8}
Suppose that $F\in \mathbf{Sh}_{(\mathcal{T},\mathcal{P})}(X,Y)$ is given of a morphism $\mathbf{f}=[(f_\mu,\phi)]:\mathbf{X}=(X_\lambda,p_{\lambda\lambda'},\Lambda)\rightarrow \mathbf{Y}=(Y_\mu,q_{\mu\mu'},M)$, with $\mathbf{p}=(p_\lambda):X \rightarrow \mathbf{X}$ a $\mathcal{P}$-expansion of $X$. Then after an idea in \cite{Ava-Gev} (Proposition 4) we can prove that the condition (2.10) from the definition of the strong movability can be written as
\begin{equation}
u\circ p_{\lambda}=f_{\mu'}\circ p_{\phi(\mu')}.
\end{equation}
Indeed, if (2.10) holds, i.e., $u\circ p_{\lambda\lambda^\ast}=f_{\mu'}\circ p_{\phi(\mu')\lambda^\ast}$, then  composing at right by $p_{\lambda^\ast}$, we obtain (4.1). Conversely, suppose that the relation (4.1) is satisfied. Consider $\lambda^0\in \Lambda, \lambda^0\geq \lambda,\phi(\mu')$, for which we have $(u\circ p_{\lambda\lambda^0})\circ p_{\lambda^0} =(f_{\mu'}\circ p_{\phi(\mu')\lambda^0})\circ p_{\lambda^0}$. Then by \cite{Mard-Segal1}, Ch.I, \S 2.1, Theorem 1, condition (AE2), there is an index $\lambda^\ast\geq \lambda^0$ such that $u\circ p_{\lambda\lambda^0}\circ p_{\lambda^0\lambda^\ast}=f_{\mu'}\circ p_{\phi(\mu')\lambda^0}\circ p_{\lambda^0\lambda^\ast}$, which implies (2.10).
\end{rem}

\section{Co-movability}

\begin{defn}\label{def.5.1}

Let $\mathbf{X}=(X_\lambda,p_{\lambda\lambda'},\Lambda)$ and $\mathbf{Y}=(Y_\mu,q_{\mu\mu'},M)$ be inverse systems in a category $\mathcal{C}$ and $(f_\mu,\phi):\mathbf{X}\rightarrow \mathbf{Y}$ a morphism of inverse systems. We say that the $(f_\mu,\phi)$ is \textsl{co-movable morphism} provided every $\mu\in M$ admits $\lambda\in \Lambda$, $\lambda\geq \phi(\mu)$ (called a \textsl{co-movability index} of $\mu$ relative to $(f_\mu,\phi)$ ) such that each $\lambda'\geq \phi(\mu)$ admits a morphism $r:X_{\lambda}\rightarrow X_{\lambda'}$ of $\mathcal{C}$ which satisfies \begin{equation}
f_{\mu\lambda}=f_{\mu\lambda'}\circ r,
\end{equation}
i.e., makes the following outside diagram commutative
$$\xymatrix{
& {Y_\mu} &\\
& {X_{\phi(\mu)}}\ar[u]^-{f_\mu} &\\
{X_{\lambda}}\ar[uur]^-{f_{\mu\lambda}}\ar[ur]_-{p_{\phi(\mu)\lambda}}\ar@{-->}[rr]_-{r} & & {X_{\lambda'}}\ar[ul]^-{p_{\phi(\mu)\lambda'}}\ar[uul]_-{f_{\mu\lambda'}}
}$$

If in addition there exists a index $\lambda^\ast\geq \lambda,\lambda'$ such that
\begin{equation}
r\circ p_{\lambda\lambda^\ast}=p_{\lambda'\lambda^\ast},
\end{equation}
we say that $(f_\mu,\phi)$ is \textsl{strongly co-movable morphism}. In this case $\lambda$ is called a \textsl{strong }\textsl{ co-movability index} for $\mu$ with respect to $(f_\mu,\phi)$.
\end{defn}

\begin{defn}\label{def.5.2}

Let $\mathbf{X}=(X_\lambda,p_{\lambda\lambda'},\Lambda)$ and $\mathbf{Y}=(Y_\mu,q_{\mu\mu'},M)$ be inverse systems in a  category $\mathcal{C}$ and $(f_\mu,\phi):\mathbf{X}\rightarrow \mathbf{Y}$ a morphism of inverse systems. We say that the $(f_\mu,\phi)$ is \textsl{uniformly }\textsl{co-movable morphism} provided every $\mu\in M$ admits $\lambda\in \Lambda$, $\lambda\geq \phi(\mu)$ (called a \textsl{uniform} \textsl{co-movability index} of $\mu$ relative to $(f_\mu,\phi)$ ) such that there is a morphism $\mathbf{r}:X_{\lambda}\rightarrow \mathbf{X}$ in pro- $\mathcal{C}$ satisfying
\begin{equation}
f_{\mu\lambda}=\mathbf{f}_{\mu}\circ \mathbf{r},
\end{equation}
i.e., makes the following outside diagram commutative
$$\xymatrix{
& {Y_\mu} &\\
& {X_{\phi(\mu)}}\ar[u]^-{f_\mu} &\\
{X_{\lambda}}\ar[uur]^-{f_{\mu\lambda}}\ar[ur]_-{p_{\phi(\mu)\lambda}}\ar@{-->}[rr]_-{\mathbf{r}} & & {\mathbf{X}}\ar[ul]^-{\mathbf{p}_{\phi(\mu)}}\ar[uul]_-{\mathbf{f}_{\mu}}
}$$

where $\mathbf{f}_\mu=f_\mu \circ \mathbf{p}_{\phi(\mu)}$.
\end{defn}
\begin{rem}\label{rem.5.3}
The pro-morphism $\mathbf{r}:X_{\lambda}\rightarrow \mathbf{X}$ is given by some morphisms $r_{\lambda'}:X_{\lambda}\rightarrow X_{\lambda'}$, such that if $\lambda'_1\leq \lambda'_2$ then $r_{\lambda'_1}=p_{\lambda'_1\lambda'_2}\circ r_{\lambda'_2}$. The relation $f_{\mu\lambda}=\mathbf{f}_\mu\circ \mathbf{r}$ means $f_{\mu\lambda}=f_\mu\circ r_{\phi(\mu)}$. Therefore, $\lambda'\geq \phi(\mu)$ implies $f_{\mu\lambda}=f_\mu\circ p_{\phi(\mu)\lambda'}\circ r_{\lambda'}=f_{\mu\lambda'}\circ r_{\lambda'}$. In this way we have that uniform co-movability implies co-movability.
\end{rem}
\begin{rem}\label{rem.5.4} Suppose that $(f_\mu,\phi):\mathbf{X}=(X_\lambda,p_{\lambda\lambda'},\Lambda)\rightarrow \mathbf{Y}=(Y_\mu,q_{\mu\mu'},M)$ is a morphism of inverse systems and let $\mu\in M$. If $\lambda \geq \phi(\mu)$ is a co-movability (strong co-movability, uniform co-movability) index of $\mu$ with respect to $(f_\mu,\phi)$, then any $\lambda'>\lambda$ has this property.

\end{rem}
\begin{prop}\label{prop.5.5}
Let $\mathbf{X}=(X_\lambda,p_{\lambda\lambda'},\Lambda)$ and $\mathbf{Y}=(Y_\mu,q_{\mu\mu'},M)$ inverse systems in a the category $\mathcal{C}$ and $(f_\mu,\phi):\mathbf{X}\rightarrow \mathbf{Y}$ a morphism of inverse systems. If $\mathbf{X}$ is a movable (strongly movable, uniformly movable) system and $\mathbf{Y}$ is an arbitrary system, then $(f_\mu,\phi)$ is co-movable (strongly co-movable, uniformly co-movable) morphism.
\end{prop}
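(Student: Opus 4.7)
The plan is to show that for each $\mu\in M$ the movability (strong, uniform) index of $\phi(\mu)\in\Lambda$ furnished by $\mathbf{X}$ itself already serves as a co-movability (strong, uniform) index of $\mu$ with respect to $(f_\mu,\phi)$. The key observation is that the factorization giving $\mathbf{X}$ its movability lifts to a factorization of $f_{\mu\lambda}$ simply by post-composing with $f_\mu$.

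More concretely, for the movable case, fix $\mu\in M$ and apply Definition \ref{def.2.1} to $\phi(\mu)\in\Lambda$: there is $\lambda\geq\phi(\mu)$ such that every $\lambda'\geq\phi(\mu)$ admits $r:X_\lambda\to X_{\lambda'}$ with $p_{\phi(\mu)\lambda'}\circ r=p_{\phi(\mu)\lambda}$. Composing on the left with $f_\mu$ gives
\[
f_{\mu\lambda'}\circ r=f_\mu\circ p_{\phi(\mu)\lambda'}\circ r=f_\mu\circ p_{\phi(\mu)\lambda}=f_{\mu\lambda},
\]
which is exactly the defining relation (5.1) of co-movability. The condition $\lambda\geq\phi(\mu)$ needed in Definition \ref{def.5.1} is automatic since the movability index was chosen with $\lambda\geq\phi(\mu)$.

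For the strongly movable case I would use Definition \ref{def.2.18} applied to $\phi(\mu)$: the same $r$ is additionally equipped with $\lambda^\ast\geq\lambda,\lambda'$ satisfying $r\circ p_{\lambda\lambda^\ast}=p_{\lambda'\lambda^\ast}$, which is literally relation (5.2). So no further argument is required: the strong movability data transfers verbatim to strong co-movability of the morphism. For the uniformly movable case, Definition \ref{defn.3.1} applied to $\phi(\mu)$ produces $\lambda\geq\phi(\mu)$ and a pro-morphism $\mathbf{r}:X_\lambda\to\mathbf{X}$ with $p_{\phi(\mu)\lambda}=\mathbf{p}_{\phi(\mu)}\circ\mathbf{r}$. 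Post-composing with $f_\mu$ yields
\[
f_{\mu\lambda}=f_\mu\circ p_{\phi(\mu)\lambda}=f_\mu\circ\mathbf{p}_{\phi(\mu)}\circ\mathbf{r}=\mathbf{f}_\mu\circ\mathbf{r},
\]
which is relation (5.3).

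There is essentially no obstacle here; the proof is a direct transfer of the movability data on $\mathbf{X}$ through $f_\mu$. The only care needed is the indexing bookkeeping (ensuring the comparability relations $\lambda\geq\phi(\mu)$ and, in the strong case, $\lambda^\ast\geq\lambda,\lambda'$ line up with Definitions \ref{def.5.1} and \ref{def.5.2}), all of which is immediate from how the movability indices are chosen.
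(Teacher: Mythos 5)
Your proof is correct and follows exactly the paper's argument: the paper also takes a movability (strong, uniform) index of $\phi(\mu)$ for the system $\mathbf{X}$ and observes that post-composition with $f_\mu$ turns the factorization of $p_{\phi(\mu)\lambda'}$ into the required factorization of $f_{\mu\lambda}$, making it a co-movability (strong, uniform) index of $\mu$ with respect to $(f_\mu,\phi)$. Your write-up just spells out the composition step that the paper leaves implicit.
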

\begin{proof}
If $\mu\in M$,  then a movability (strong movability,uniform movability) index $\lambda'\in \Lambda$ for $\phi(\mu)$ this  is a co-movability (strong co-movability, uniform co-movability)index for $\mu$ with respect to $(f_\mu,\phi)$.
\end{proof}
\begin{prop}\label{prop.5.6}Let  $(f_\mu,\phi):\mathbf{X}=(X_\lambda,p_{\lambda\lambda'},\Lambda)\rightarrow \mathbf{Y}=(Y_\mu,q_{\mu\mu'},M)$ be  a morphism of inverse systems such that the directed preordered set $(M,\leq)$ admits a cofinal subset $\overline{M}$ such that every $\overline{\mu}\in \overline{M}$ admits a co-movability (strong co-movability, uniform co-movability) index with respect to $(f_\mu,\phi)$. Suppose also that $\phi:(M,\leq)\rightarrow (\Lambda,\leq)$ is a decreasing function. Then $(f_\mu,\phi)$ is a co-movable (strongly co-movable, uniformly co-movable) morphism.
\end{prop}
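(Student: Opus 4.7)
The plan is to handle the co-movable case explicitly and then to note that the strongly and uniformly co-movable cases reduce to the same bookkeeping with minor additions.

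First I would fix $\mu \in M$ and use cofinality of $\overline{M}$ to select $\overline{\mu} \in \overline{M}$ with $\overline{\mu} \geq \mu$. The morphism-of-inverse-systems condition applied to $\mu \leq \overline{\mu}$ furnishes an index $\lambda^{\#} \geq \phi(\mu), \phi(\overline{\mu})$ such that, after composing with bonding maps, $f_{\mu\widetilde{\lambda}} = q_{\mu\overline{\mu}} \circ f_{\overline{\mu}\widetilde{\lambda}}$ holds for every $\widetilde{\lambda} \geq \lambda^{\#}$. I would then pick a co-movability index $\overline{\lambda}$ of $\overline{\mu}$ (from the hypothesis) and, using Remark \ref{rem.5.4}, replace it by some $\lambda \geq \overline{\lambda}, \lambda^{\#}, \phi(\mu)$, which is still a co-movability index for $\overline{\mu}$.

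Next I would verify that $\lambda$ is a co-movability index for $\mu$. Given $\lambda' \geq \phi(\mu)$, choose $\lambda'' \geq \lambda', \lambda^{\#}, \phi(\overline{\mu})$ (the decreasing hypothesis, giving $\phi(\overline{\mu}) \leq \phi(\mu)$, streamlines this). Co-movability of $\overline{\mu}$ at the pair $(\lambda, \lambda'')$ yields $r'' : X_\lambda \to X_{\lambda''}$ with $f_{\overline{\mu}\lambda} = f_{\overline{\mu}\lambda''} \circ r''$, and then $r := p_{\lambda'\lambda''} \circ r''$ is the desired morphism $X_\lambda \to X_{\lambda'}$. A short chase, invoking $f_{\mu\lambda''} = q_{\mu\overline{\mu}} \circ f_{\overline{\mu}\lambda''}$ and $f_{\mu\lambda} = q_{\mu\overline{\mu}} \circ f_{\overline{\mu}\lambda}$ (both valid because $\lambda, \lambda'' \geq \lambda^{\#}$), collapses $f_{\mu\lambda'} \circ r$ to $f_{\mu\lambda}$.

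For the strong case I would use the extra datum from strong co-movability of $\overline{\mu}$: an index $\lambda^{\ast\ast} \geq \lambda, \lambda''$ with $r'' \circ p_{\lambda\lambda^{\ast\ast}} = p_{\lambda''\lambda^{\ast\ast}}$; then $r \circ p_{\lambda\lambda^{\ast\ast}} = p_{\lambda'\lambda''} \circ p_{\lambda''\lambda^{\ast\ast}} = p_{\lambda'\lambda^{\ast\ast}}$, as required. For the uniform case I would take the pro-morphism $\mathbf{r} : X_{\overline{\lambda}} \to \mathbf{X}$ witnessing uniform co-movability of $\overline{\mu}$ and form $\mathbf{r}' := \mathbf{r} \circ p_{\overline{\lambda}\lambda} : X_\lambda \to \mathbf{X}$; the identity $q_{\mu\overline{\mu}} \circ \mathbf{f}_{\overline{\mu}} = \mathbf{f}_\mu$ in pro-$\mathcal{C}$, itself a consequence of the morphism-of-systems condition after passing to equivalence classes, then yields $\mathbf{f}_\mu \circ \mathbf{r}' = q_{\mu\overline{\mu}} \circ f_{\overline{\mu}\lambda} = f_{\mu\lambda}$.

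The main obstacle I expect is the index juggling in the middle step: the co-movability equation of $\overline{\mu}$ is available for every target $\lambda'' \geq \phi(\overline{\mu})$, whereas the morphism-of-systems equation for $\mu \leq \overline{\mu}$ only holds exactly above $\lambda^{\#}$. Passing through the intermediate $\lambda''$ that dominates both $\lambda'$ and $\lambda^{\#}$, and then pulling $r''$ back along $p_{\lambda'\lambda''}$, is the device that reconciles these two ranges; the decreasing hypothesis on $\phi$ is what keeps the ancillary inequalities $\phi(\overline{\mu}) \leq \phi(\mu) \leq \lambda'$ automatic and, I expect, is essential for this reconciliation to go through cleanly.
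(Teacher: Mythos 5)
Your proposal is correct and takes essentially the same route as the paper's own proof: select $\overline{\mu}\in\overline{M}$ with $\overline{\mu}\geq\mu$, enlarge its co-movability index (Remark \ref{rem.5.4}) until it dominates $\phi(\mu)$ and an index where $f_{\mu\lambda}=q_{\mu\overline{\mu}}\circ f_{\overline{\mu}\lambda}$ holds, then for a given $\lambda'\geq\phi(\mu)$ pass to a dominating index $\lambda''$ where the same compatibility holds, apply co-movability of $\overline{\mu}$ there, and pull the resulting morphism back along $p_{\lambda'\lambda''}$ (the paper's $\overline{\lambda'}$ and $\overline{r}$ are your $\lambda''$ and $r''$). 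Your explicit treatments of the strong case (composing the extra relation with $p_{\lambda'\lambda''}$) and of the uniform case (composing the witnessing pro-morphism with a bonding map and using $q_{\mu\overline{\mu}}\circ\mathbf{f}_{\overline{\mu}}=\mathbf{f}_{\mu}$) are precisely the adaptations the paper only sketches.
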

\begin{proof}
In the given assumptions each $\mu \in M$ admits a $\overline{\mu}\in \overline{M}$ such that $\mu\leq\overline{\mu}$. Consider the case of simple co-movability. We must show that there exists a co-movability index for $\mu$. By Remark \ref{rem.5.4}, we can choose for $\overline{\mu}\in \overline{M}, \overline{\mu}\geq \mu$, a co-movability index $\lambda\geq \phi(\mu)\geq \phi(\overline{\mu})$ such that $f_{\mu\lambda}=q_{\mu\overline{\mu}}\circ
f_{\overline{\mu} \lambda}$. If $\lambda'\geq\phi(\mu)\geq \phi(\overline{\mu})$, consider $\overline{\lambda'}\geq \lambda'$ and such that $f_{\mu\overline{\lambda'}}=q_{\mu\overline{\mu}}\circ f_{\overline{\mu}{\overline{\lambda'}}}$. For this let $\overline{r }:X_\lambda\rightarrow X_{\overline{\lambda'}}$ be such that $f_{\overline{\mu}\lambda}=f_{\overline{\mu}\overline{\lambda'}}\circ \overline{r}$. Then $f_{\mu\lambda}=q_{\mu\overline{\mu}}\circ f_{\overline{\mu}\overline{\lambda'}}\circ \overline{r}=f_{\mu\overline{\lambda'}}\circ \overline{r} =f_{\mu\lambda'}\circ(p_{\lambda'\overline{\lambda'}}\circ \overline{r})=f_{\mu\lambda'}\circ r$. If $\overline{r}\circ p_{\lambda\lambda^\ast}=
p_{\lambda'\lambda^\ast}$, then $r\circ p_{\lambda\lambda^\ast}=p_{\lambda'\lambda^\ast}$. The proof in the case of the uniform co-movability is easy adapted from the above.
\end{proof}
\begin{thm}\label{thm.5.7}An inverse system $\mathbf{X}=(X_\lambda,p_{\lambda\lambda'},\Lambda)$ is movable (strongly movable, uniformly movable) if and only if the identity morphism $1_\mathbf{X}$ is co-movable (strongly co-movable, uniformly co-movable).
\end{thm}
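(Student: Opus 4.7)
The plan is to observe that, for the identity morphism $1_{\mathbf{X}}:\mathbf{X}\rightarrow \mathbf{X}$ on an inverse system $\mathbf{X}=(X_\lambda,p_{\lambda\lambda'},\Lambda)$, the index function $\phi$ is the identity of $\Lambda$ and each component $f_\lambda$ is $1_{X_\lambda}$, so that $f_{\lambda\lambda'}=p_{\lambda\lambda'}$ and the restriction $\mathbf{f}_\lambda$ coincides with $\mathbf{p}_\lambda$. Under this dictionary, every clause in the three co-movability conditions of Definitions \ref{def.5.1} and \ref{def.5.2} translates directly into the corresponding clause of Definitions \ref{def.2.1}, \ref{def.2.18}, \ref{defn.3.1}. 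Thus the proof reduces to a definitional comparison, in the same spirit as Theorems \ref{thm.2.5} and \ref{thm.3.6} and Remark \ref{rem.2.20}.

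More precisely, for the ``movable $\Leftrightarrow$ $1_{\mathbf{X}}$ co-movable'' case I would take an arbitrary $\lambda\in\Lambda$ (playing the role of $\mu\in M$), and note that a co-movability index $\lambda^{1}\geq \lambda$ of $\lambda$ with respect to $1_{\mathbf{X}}$ provides, for each $\lambda'\geq \lambda$, a morphism $r:X_{\lambda^{1}}\rightarrow X_{\lambda'}$ with $p_{\lambda\lambda^{1}}=p_{\lambda\lambda'}\circ r$, which is exactly the movability condition of Definition \ref{def.2.1} (with the roles of $\lambda'$ and $\lambda''$ played by $\lambda^{1}$ and $\lambda'$). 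Conversely, a movability index $\lambda^{1}$ for $\lambda$ in $\mathbf{X}$ serves verbatim as a co-movability index for $\lambda$ with respect to $1_{\mathbf{X}}$.

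For the strong case I would add the auxiliary clause: once $r:X_{\lambda^{1}}\rightarrow X_{\lambda'}$ is given, strong co-movability of $1_{\mathbf{X}}$ produces $\lambda^{\ast}\geq \lambda^{1},\lambda'$ with $r\circ p_{\lambda^{1}\lambda^{\ast}}=p_{\lambda'\lambda^{\ast}}$, which is precisely the second relation (2.8) of strong movability after renaming $\lambda^{1}\mapsto \lambda'$ and $\lambda'\mapsto \lambda''$; the converse is the same identification read backwards. For the uniform case, the identification $\mathbf{f}_\lambda=\mathbf{p}_\lambda$ makes the equation $f_{\lambda\lambda^{1}}=\mathbf{f}_\lambda\circ \mathbf{r}$ coincide with $p_{\lambda\lambda^{1}}=\mathbf{p}_\lambda\circ \mathbf{r}$.

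There is essentially no obstacle here beyond bookkeeping: the only care needed is to keep the two different meanings of the ``second index'' straight (the index called $\lambda'$ in Definition \ref{def.5.1} corresponds to $\lambda''$ in Definition \ref{def.2.1}, while the co-movability index $\lambda$ there corresponds to the movability index $\lambda'$), and to check that the domain/codomain conventions of the auxiliary morphism $r$ agree with both definitions once this renaming is made. Once the dictionary is written down, all three equivalences fall out simultaneously.
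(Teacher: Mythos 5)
Your proof is correct and is essentially the argument in the paper: both directions reduce to observing that for $1_{\mathbf{X}}=(1_{X_\lambda},\mathrm{id}_\Lambda)$ one has $f_{\lambda\lambda'}=p_{\lambda\lambda'}$ and $\mathbf{f}_\lambda=\mathbf{p}_\lambda$, so the (strong, uniform) co-movability conditions coincide with the (strong, uniform) movability conditions up to renaming of indices. The only cosmetic difference is that the paper routes the forward implication through Proposition \ref{prop.5.5} (a movability index of $\phi(\mu)$ is a co-movability index of $\mu$), which for the identity morphism is exactly your definitional dictionary.
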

\begin{proof}If $\mathbf{X}$ is movable (strongly movable, uniformly movable), then by Prop. \ref{prop.5.5}, the morphism $1_{\mathbf{X}}$ is co-movable (strongly co-movable, uniformly co-movable). Conversely, if $1_\mathbf{X}$ is co-movable (strongly co-movable, uniformly co-movable), then a co-movability (strong co-movability, uniform co-movability) index of $\lambda\in \Lambda$ with respect to $1_{\mathbf{X}}=(1_{X_\lambda},id_{\Lambda})$ is a movability (strong movability, uniform movability)index for $\lambda$.
\end{proof}
\begin{thm}\label{thm.5.8}A morphism of inverse systems of pointed sets is co-movable if and only if it has the Mittag-Leffler property.
\end{thm}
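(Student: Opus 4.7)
The plan is to prove both implications by directly unwinding the definitions, exploiting the fact that in pointed sets we have enough freedom to build set-theoretic maps once the image-set equality is established.

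For the forward direction, I assume $(f_\mu,\phi)$ is co-movable and, for a given $\mu\in M$, I take a co-movability index $\lambda\geq\phi(\mu)$ and claim that $\lambda$ itself is an ML index. Given any $\lambda'\geq\lambda$, one inclusion $f_{\mu\lambda'}(X_{\lambda'})\subseteq f_{\mu\lambda}(X_\lambda)$ is automatic from the factorization $f_{\mu\lambda'}=f_{\mu\lambda}\circ p_{\lambda\lambda'}$, which holds because $\lambda'\geq\lambda\geq\phi(\mu)$. For the reverse inclusion $f_{\mu\lambda}(X_\lambda)\subseteq f_{\mu\lambda'}(X_{\lambda'})$, I invoke the co-movability condition, which for $\lambda'\geq\phi(\mu)$ produces a pointed map $r\colon X_\lambda\to X_{\lambda'}$ with $f_{\mu\lambda}=f_{\mu\lambda'}\circ r$, giving the missing inclusion.

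For the backward direction, I assume $(f_\mu,\phi)$ has the Mittag-Leffler property and, for a given $\mu\in M$, I take an ML index $\lambda\geq\phi(\mu)$ and show that $\lambda$ is already a co-movability index. Given $\lambda'\geq\phi(\mu)$ (which is \emph{not} assumed to be $\geq\lambda$), I choose $\lambda''\in\Lambda$ with $\lambda''\geq\lambda,\lambda'$. Then $\lambda''\geq\lambda$ forces $f_{\mu\lambda''}(X_{\lambda''})=f_{\mu\lambda}(X_\lambda)$ by ML, so for every $x\in X_\lambda$ there exists $y_x\in X_{\lambda''}$ with $f_{\mu\lambda''}(y_x)=f_{\mu\lambda}(x)$. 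I define the pointed map $r\colon X_\lambda\to X_{\lambda'}$ by $r(\ast)=\ast$ and $r(x)=p_{\lambda'\lambda''}(y_x)$ for $x\neq\ast$, which is a legitimate pointed-set morphism (constructed via choice). Using $f_{\mu\lambda''}=f_{\mu\lambda'}\circ p_{\lambda'\lambda''}$ then yields $f_{\mu\lambda'}(r(x))=f_{\mu\lambda''}(y_x)=f_{\mu\lambda}(x)$, which is the required identity $f_{\mu\lambda}=f_{\mu\lambda'}\circ r$.

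The only delicate point is to remember that $\lambda'$ in the definition of co-movability ranges over all indices $\geq\phi(\mu)$ rather than only those $\geq\lambda$; hence one must pass through an auxiliary $\lambda''$ that dominates both $\lambda$ and $\lambda'$ to activate the ML equality. Preservation of basepoints by $r$ is handled by setting $r(\ast)=\ast$ directly, which is compatible with the freedom in the selection since $f_{\mu\lambda}(\ast)=\ast$ and $f_{\mu\lambda''}(\ast)=\ast$. No other obstacle arises, and the argument uses no properties of pointed sets beyond the existence of arbitrary basepoint-preserving set maps.
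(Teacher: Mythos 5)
Your proof is correct and follows essentially the same route as the paper: the co-movable $\Rightarrow$ ML direction uses the two inclusions exactly as in the paper, and the ML $\Rightarrow$ co-movable direction builds the required pointed map by lifting through an auxiliary index $\lambda''\geq\lambda,\lambda'$ (via a choice of preimages guaranteed by the image equality) and composing with the bonding map $p_{\lambda'\lambda''}$, which is the paper's argument with the auxiliary index handled in one step rather than two. Your explicit attention to the fact that $\lambda'$ need only satisfy $\lambda'\geq\phi(\mu)$ is exactly the point the paper's proof also addresses (and where the paper's choice of $\lambda'''$ contains a small typo that your version states correctly).
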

\begin{proof}Let $(f_\mu,\phi):((X_\lambda,\ast),p_{\lambda\lambda'},\Lambda)\rightarrow ((Y_\mu,\ast),q_{\mu\mu'},M)$ be a morphism with the Mittag-Leffler property. Then for $\mu\in M$ there is a ML index $\lambda\in \Lambda$, $\lambda\geq\phi(\mu)$ such that
(2.4) holds for each $\lambda'\geq \lambda$. We can prove that $\lambda$ is a co-movability index of $\mu$ with respect to $(f_\mu,\phi)$. If $\lambda'
\geq \phi(\mu)$ and $\lambda'\geq \lambda$, the relation (2.4) defines a map of pointed sets $r:(X_\lambda,\ast)\rightarrow (X_{\lambda'},\ast)$ such
that $f_{\mu\lambda'}\circ r=f_{\mu\lambda}$. For any other $\lambda''\geq \phi(\mu)$, one choose $\lambda'''\geq \lambda'',\phi(\mu)$ and consider
$r':X_{\lambda}\rightarrow X_{\lambda'''}$ such that $f_{\mu\lambda'''}\circ r'=f_{\mu\lambda}$. Then the composition $r:=p_{\lambda''\lambda'''}\circ r'$ satisfies the relation $f_{\mu\lambda''}\circ r=f_{\mu\lambda''}\circ p_{\lambda''\lambda'''}\circ r'=f_{\mu\lambda'''}\circ r'=f_{\mu\lambda}$.

Conversely, let $(f_\mu,\phi)$ be co-movable. Let $\mu\in M$ and $\lambda\in \Lambda$ with $\lambda\geq \phi(\mu)$ a co-movability index of $\mu$ with respect to $(f_\mu,\phi)$. Then, for $\lambda'\geq \lambda$ there exists $r:(X_\lambda,\ast)\rightarrow (X_{\lambda'},\ast)$ such that $f_{\mu\lambda'}
\circ r=f_{\mu\lambda}$. This implies the inclusion $f_{\mu\lambda}(X_\lambda)\subseteq f_{\mu\lambda'}(X_{\lambda'})$. The converse inclusion follows from the relation $f_{\mu\lambda}\circ p_{\lambda\lambda'}=f_{\mu\lambda'}$.
\end{proof}
Using Theorems \ref{thm.5.7}, \ref{thm.5.8} and Definition \ref{def.2.15},  we obtain the following corollary (see \cite{Mard-Segal2}, Ch. II, \S 6.1, Theorem 2).
\begin{cor}\label{cor.5.9}A pointed pro-set $(\mathbf{X},\ast)\in$ pro-Set$_\ast$ is movable if and only if it has the Mittag-Leffler property.
\end{cor}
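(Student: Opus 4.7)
The plan is to chain together the three ingredients flagged right before the statement: Theorem \ref{thm.5.7}, Theorem \ref{thm.5.8}, and Definition \ref{def.2.15}. The point is that each provides one biconditional, and they compose neatly to give the claim without any new work on the inverse system $(\mathbf{X},\ast)$ itself.

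First I would apply Theorem \ref{thm.5.7} in its movability form: the pointed pro-set $(\mathbf{X},\ast)$ is movable if and only if the identity morphism $1_{(\mathbf{X},\ast)}:(\mathbf{X},\ast)\rightarrow (\mathbf{X},\ast)$ is co-movable in pro-Set$_\ast$. This reduces the question to one about a specific morphism of inverse systems rather than about the system itself.

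Next I would invoke Theorem \ref{thm.5.8}, applied to the particular morphism $1_{(\mathbf{X},\ast)}$: since this is a morphism of inverse systems of pointed sets, it is co-movable if and only if it has the Mittag--Leffler property in the sense of Definition \ref{def.2.15}. Finally, Definition \ref{def.2.15} itself records that the Mittag--Leffler property of $1_{(\mathbf{X},\ast)}$ is by definition the Mittag--Leffler property of the inverse system $(\mathbf{X},\ast)$ (obtained by taking $\phi = \mathrm{id}_\Lambda$ and $f_\lambda = 1_{X_\lambda}$, in which case $f_{\lambda\lambda'} = p_{\lambda\lambda'}$ and condition (2.4) reads $p_{\lambda\lambda''}(X_{\lambda''}) = p_{\lambda\lambda'}(X_{\lambda'})$). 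Chaining the three equivalences yields the corollary.

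There is essentially no obstacle here beyond bookkeeping: all substantive content has already been established, and the role of this corollary is simply to recover the classical characterization of movability by Mittag--Leffler as a special case of the new morphism-level result. The only minor care needed is to verify that Definition \ref{def.2.15} is really stated so that the identity morphism on $(\mathbf{X},\ast)$ has the Mittag--Leffler property in precisely the sense classically attributed to the system, which the parenthetical remark inside that definition confirms.
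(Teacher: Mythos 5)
Your proof is correct and follows exactly the route the paper intends: the corollary is obtained by chaining Theorem \ref{thm.5.7} (movability of $(\mathbf{X},\ast)$ is equivalent to co-movability of $1_{(\mathbf{X},\ast)}$), Theorem \ref{thm.5.8} (co-movability of a morphism of pointed inverse systems is equivalent to its Mittag--Leffler property), and the observation in Definition \ref{def.2.15} that the Mittag--Leffler property of $1_{(\mathbf{X},\ast)}$ is precisely that of the system. Nothing further is needed.
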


\begin{prop}\label{prop.5.10}
Let $(f_\mu,\phi),(f'_\mu,\phi'):\mathbf{X}=(X_\lambda,p_{\lambda\lambda'},\Lambda)\rightarrow \mathbf{Y}=(Y_\mu,q_{\mu\mu'},M)$ be two equivalent morphisms of inverse systems, $(f_\mu,\phi)\sim (f'_\mu,\phi')$. If the morphism $(f_\mu,\phi)$ is co-movable (strongly co-movable) then the morphism $(f'_\mu,\phi')$ is also co-movable (strongly co-movable).
\end{prop}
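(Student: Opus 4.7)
The plan is to mirror the strategy of Proposition 2.10 (and Proposition 2.23 for the strong case), exploiting the fact that the equivalence relation $(f_\mu,\phi)\sim(f'_\mu,\phi')$ says that for every $\mu\in M$ there exists an index $\lambda'\geq\phi(\mu),\phi'(\mu)$ with $f_{\mu\lambda'}=f'_{\mu\lambda'}$, and the fact (Remark 5.4) that any index past a co-movability index is again a co-movability index.

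Concretely, fix $\mu\in M$ and let $\lambda\geq\phi(\mu)$ be a co-movability index of $\mu$ with respect to $(f_\mu,\phi)$. First I would enlarge $\lambda$: by the equivalence relation pick $\widetilde\lambda\geq\lambda,\phi(\mu),\phi'(\mu)$ such that $f_{\mu\widetilde\lambda}=f'_{\mu\widetilde\lambda}$; by Remark 5.4 this $\widetilde\lambda$ is still a co-movability index of $\mu$ with respect to $(f_\mu,\phi)$. I claim $\widetilde\lambda$ is also a co-movability index of $\mu$ with respect to $(f'_\mu,\phi')$. Given any $\lambda'\geq\phi'(\mu)$, apply the equivalence once more to obtain $\lambda''\geq\lambda',\phi(\mu),\phi'(\mu)$ with $f_{\mu\lambda''}=f'_{\mu\lambda''}$. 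The co-movability of $(f_\mu,\phi)$ at $\widetilde\lambda$ supplies a morphism $\overline r:X_{\widetilde\lambda}\to X_{\lambda''}$ satisfying $f_{\mu\lambda''}\circ\overline r=f_{\mu\widetilde\lambda}$. Setting $r:=p_{\lambda'\lambda''}\circ\overline r:X_{\widetilde\lambda}\to X_{\lambda'}$ and using $f'_{\mu\lambda'}\circ p_{\lambda'\lambda''}=f'_{\mu\lambda''}=f_{\mu\lambda''}$ together with $f_{\mu\widetilde\lambda}=f'_{\mu\widetilde\lambda}$, a short chain of equalities yields $f'_{\mu\lambda'}\circ r=f'_{\mu\widetilde\lambda}$, which is precisely (5.1) for $(f'_\mu,\phi')$.

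For the strong case, I would run the same construction but additionally invoke the strong co-movability hypothesis: there is $\lambda^\ast\geq\widetilde\lambda,\lambda''$ with $\overline r\circ p_{\widetilde\lambda\lambda^\ast}=p_{\lambda''\lambda^\ast}$. Composing with $p_{\lambda'\lambda''}$ on the left gives $r\circ p_{\widetilde\lambda\lambda^\ast}=p_{\lambda'\lambda''}\circ p_{\lambda''\lambda^\ast}=p_{\lambda'\lambda^\ast}$, which is exactly condition (5.2) for the new morphism.

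The routine aspect is the chain of identifications; the only real care needed is index bookkeeping, in particular ensuring that a single index $\lambda''$ can simultaneously dominate $\lambda'$ and support the equality $f_{\mu\lambda''}=f'_{\mu\lambda''}$, which is guaranteed by cofinite directedness of $M$ and the definition of the equivalence relation. I do not foresee a genuine obstacle; the argument is essentially a transport of the co-movability data across the equivalence, just as in Proposition 2.10, with the factorization $r=p_{\lambda'\lambda''}\circ\overline r$ being the only mildly non-obvious move.
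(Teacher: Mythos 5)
Your proposal is correct and follows essentially the same route as the paper's proof: enlarge the co-movability index to an index where $f_{\mu\cdot}=f'_{\mu\cdot}$ (the paper keeps the original index $\lambda$ and pre-composes with $p_{\lambda\overline{\lambda}}$, while you invoke Remark \ref{rem.5.4} to shift the index — an inessential difference), then for a given target $\lambda'\geq\phi'(\mu)$ apply co-movability at a common refinement where the two morphisms agree and post-compose with the appropriate bonding morphism, with the same bookkeeping handling the strong case. No gaps; your explicit choice of $\lambda''\geq\lambda',\phi(\mu),\phi'(\mu)$ with $f_{\mu\lambda''}=f'_{\mu\lambda''}$ is in fact slightly more careful than the paper's wording.
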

\begin{proof}
Let $\mu\in M$ with $\lambda\in \Lambda, \lambda\geq \phi(\mu)$ a co-movability index for $\mu$ relative to $(f_\mu,\phi)$. Consider $\overline{\lambda}\in \Lambda$ sufficiently large, $\overline{\lambda}\geq \lambda$ and $\overline{\lambda}\geq \phi(\mu),\phi'(\mu)$ such that

$$f_{\mu\overline{\lambda}}=f'_{\mu\overline{\lambda}}$$.

We can prove that $\overline{\lambda}$ is a co-movability index of $\mu$ relative to $(f'_\mu,\phi')$. If $\overline{\lambda'}\geq \phi'(\mu)$, consider $\lambda'\geq \overline{\lambda'},\phi(\mu)$. For this there exists a morphism $r:X_{\lambda}\rightarrow X_{\lambda'}$ such that
$$ f_{\mu\lambda}=f_{\mu\lambda'}\circ r.$$
Consider the morphism $\overline{r}:X_{\overline{\lambda}}\rightarrow X_{\overline{\lambda'}}$ defined as
$$ \overline{r}=p_{\overline{\lambda'}\lambda'}\circ r \circ p_{\lambda\overline{\lambda}} $$.

Then $f'_{\mu\overline{\lambda'}}\circ \overline{r}=f'_\mu\circ p_{\phi'(\mu)\overline{\lambda'}}\circ \overline{r}= f'_\mu\circ p_{\phi'(\mu)\overline{\lambda'}}\circ p_{\overline{\lambda'}\lambda'}\circ r\circ p_{\lambda\overline{\lambda}}=f'_\mu\circ p_{\phi'(\mu)\lambda'}\circ r\circ p_{\lambda\overline{\lambda}}=
f_\mu\circ p_{phi(\mu)\lambda'}\circ r\circ p_{\lambda\overline{\lambda}}=f_\mu\circ p_{\phi(\mu)\lambda}\circ p_{\lambda\overline{\lambda}}=f_\mu\circ p_{\phi(\mu)\overline{\lambda}}=f'_\mu\circ p_{\phi'(\mu)\overline{\lambda}}$, i.e., $f'_{\mu\overline{\lambda'}}\circ \overline{r}=f'_{\mu\overline{\lambda}}$.

If $\lambda$ is a strong co-movability index for $\mu$ with respect to $(f_\mu,\phi)$ such that $r\circ p_{\lambda\lambda^\ast}=p_{\lambda'\lambda^\ast}$, for $\lambda^\ast\geq \lambda,\lambda'$, then consider $\overline{\lambda^\ast}\geq \overline{\lambda},\overline{\lambda'},\lambda^\ast$. For this index we have: $\overline{r}\circ p_{\overline{\lambda}\overline{\lambda^\ast}}=p_{\overline{\lambda'}\lambda'}\circ r\circ p_{\lambda\overline{\lambda}}\circ p_{\overline{\lambda}\overline{\lambda^\ast}}=p_{\overline{\lambda'}\lambda'}\circ r\circ p_{\lambda\overline{\lambda^\ast}}=p_{\overline{\lambda'}\lambda'}\circ r\circ p_{\lambda\lambda^\ast}\circ p_{\lambda^\ast\overline{\lambda^\ast}}=p_{\overline{\lambda'}\lambda'}\circ p_{\lambda'\lambda^\ast}\circ p_{\lambda^\ast\overline{\lambda^\ast}}=p_{\overline{\lambda'} \overline{\lambda^\ast}}$.
\end{proof}
\begin{prop}\label{prop.5.11}
Let $(f_\mu,\phi),(f'_\mu,\phi'):\mathbf{X}=(X_\lambda,p_{\lambda\lambda'},\Lambda)\rightarrow \mathbf{Y}=(Y_\mu,q_{\mu\mu'},M)$ be two equivalent morphisms of inverse systems, $(f_\mu,\phi)\sim (f'_\mu,\phi')$. If the morphism $(f_\mu,\phi)$ is uniformly co-movable then the morphism $(f'_\mu,\phi')$ is also uniformly co-movable.
\end{prop}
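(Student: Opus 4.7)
The plan is to mimic the argument of Proposition \ref{prop.5.10}, but to work globally with the pro-morphism witness $\mathbf{r}$ instead of choosing separate $X_\lambda\to X_{\lambda'}$ morphisms for each $\lambda'$. Given $\mu\in M$, I would start from a uniform co-movability index $\lambda\geq\phi(\mu)$ for $\mu$ with respect to $(f_\mu,\phi)$, together with the associated pro-morphism $\mathbf{r}:X_\lambda\to\mathbf{X}$ satisfying $f_{\mu\lambda}=\mathbf{f}_\mu\circ\mathbf{r}$, where $\mathbf{f}_\mu=f_\mu\circ\mathbf{p}_{\phi(\mu)}$. By Remark \ref{rem.5.3} this means that, writing the components of $\mathbf{r}$ as $r_{\lambda'}:X_\lambda\to X_{\lambda'}$, one has $f_{\mu\lambda'}\circ r_{\lambda'}=f_{\mu\lambda}$ for every $\lambda'\geq\phi(\mu)$.

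Next I would use the equivalence $(f_\mu,\phi)\sim(f'_\mu,\phi')$ to produce $\overline{\lambda}\in\Lambda$ with $\overline{\lambda}\geq\lambda$, $\overline{\lambda}\geq\phi(\mu)$, $\overline{\lambda}\geq\phi'(\mu)$ and
\[
f_\mu\circ p_{\phi(\mu)\overline{\lambda}}=f'_\mu\circ p_{\phi'(\mu)\overline{\lambda}},
\]
i.e.\ $f_{\mu\overline{\lambda}}=f'_{\mu\overline{\lambda}}$. By Remark \ref{rem.5.4}, $\overline{\lambda}$ is then a candidate uniform co-movability index for $\mu$ with respect to $(f'_\mu,\phi')$. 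As the corresponding witness I would take the pro-morphism $\overline{\mathbf{r}}:X_{\overline{\lambda}}\to\mathbf{X}$ obtained by precomposing $\mathbf{r}$ with the bonding morphism, i.e.\ $\overline{\mathbf{r}}=\mathbf{r}\circ p_{\lambda\overline{\lambda}}$, whose components are $\overline{r}_{\lambda'}=r_{\lambda'}\circ p_{\lambda\overline{\lambda}}$ (these are automatically compatible with the projections of $\mathbf{X}$).

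The verification $f'_{\mu\overline{\lambda}}=\mathbf{f}'_\mu\circ\overline{\mathbf{r}}$ is the only step with substance. Expanding the right-hand side gives $f'_\mu\circ r_{\phi'(\mu)}\circ p_{\lambda\overline{\lambda}}$. I would pick an auxiliary $\lambda^{\ast}\geq\overline{\lambda}$ (so in particular $\lambda^{\ast}\geq\phi(\mu),\phi'(\mu)$), rewrite $r_{\phi'(\mu)}=p_{\phi'(\mu)\lambda^{\ast}}\circ r_{\lambda^{\ast}}$, and collect terms to obtain $f'_{\mu\lambda^{\ast}}\circ r_{\lambda^{\ast}}\circ p_{\lambda\overline{\lambda}}$. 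Since $\lambda^{\ast}\geq\overline{\lambda}$, the equivalence propagates to $f_{\mu\lambda^{\ast}}=f'_{\mu\lambda^{\ast}}$, and then the co-movability identity $f_{\mu\lambda^{\ast}}\circ r_{\lambda^{\ast}}=f_{\mu\lambda}$ collapses the expression to $f_{\mu\lambda}\circ p_{\lambda\overline{\lambda}}=f_{\mu\overline{\lambda}}=f'_{\mu\overline{\lambda}}$, as required.

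The main obstacle, and the only place where a little care is needed, is precisely this last computation: one must bridge between the index $\phi'(\mu)$ (where the witness for $(f'_\mu,\phi')$ has to be evaluated) and the index $\phi(\mu)$ (at which $\mathbf{r}$ was originally anchored). Choosing a common upper bound $\lambda^{\ast}$ above $\overline{\lambda}$ is what makes both the equivalence identity and the co-movability identity simultaneously available, so that they can be chained together. Once this is in place the argument is purely formal.
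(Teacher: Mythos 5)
Your proof is correct and follows essentially the same route as the paper: both take the index $\overline{\lambda}\geq\lambda,\phi(\mu),\phi'(\mu)$ furnished by the equivalence $(f_\mu,\phi)\sim(f'_\mu,\phi')$ and the witness $\overline{\mathbf{r}}=\mathbf{r}\circ p_{\lambda\overline{\lambda}}$, and conclude $f'_{\mu\overline{\lambda}}=\mathbf{f}'_\mu\circ\overline{\mathbf{r}}$. The only difference is that you spell out the final verification (via a bound $\lambda^{\ast}\geq\overline{\lambda}$; in fact $\lambda^{\ast}=\overline{\lambda}$ already suffices), which the paper states without detail.
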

\begin{proof}
We use the notations from the proof of Proposition \ref{prop.5.10} with the necessary changes: $\lambda$ is a uniform co-movability index for $\mu$ relative to $(f_\mu,\phi)$ and we prove that $\overline{\lambda}$ is a uniform co-movability index of $\mu$ relative to $(f'_\mu,\phi')$. The morphism $r$ is replaced by the pro-morphism $\mathbf{r}:X_{\lambda}\rightarrow \mathbf{X}$ satisfying $f_{\mu\lambda}=\mathbf{f}_{\mu}\circ \mathbf{r}$. Then defining $\overline{\mathbf{r}}:X_{\overline{\lambda}}\rightarrow \mathbf{X}$ by $\overline{\mathbf{r}}=\mathbf{r}\circ p_{\lambda\overline{\lambda}}$  it is verified the relation $f'_{\mu\overline{\lambda}}=\mathbf{f}'_\mu \circ \overline{\mathbf{r}}$.
\end{proof}
Thanks to Propositions \ref{prop.5.10} and \ref{prop.5.11}  we can give the following definition.
\begin{defn}\label{defn.5.12}
A morphism in a pro-category pro-$\mathcal{C}$, $\mathbf{f}:\mathbf{X}\rightarrow \mathbf{Y}$,  is called \textsl{co-movable (strongly co-movable, uniformly co-movable)} if $\mathbf{f}$ admits a representation $(f_\mu,\phi):\mathbf{X}\rightarrow \mathbf{Y}$ which is co-movable (strongly co-movable, uniformly co-movable) morphism.
\end{defn}
\begin{prop}\label{prop.5.13}

Let $\mathbf{X}=(X_\lambda,p_{\lambda\lambda'},\Lambda)$, $\mathbf{Y}=(Y_\mu,q_{\mu\mu'},M)$, $\mathbf{Z}=(Z_\nu,r_{\nu\nu'},N)$ be inverse systems in the category $\mathcal{C}$ and $\mathbf{f}=[(f_\mu,\phi)]:\mathbf{X}\rightarrow \mathbf{Y}$, $\mathbf{g}=[(g_\nu,\psi)]:\mathbf{Y}\rightarrow \mathbf{Z}$ pro-morphisms. Suppose that $\mathbf{f}$ is a co-movable (strongly co-movable, uniformly co-movable) pro-morphism. Then the composition $\mathbf{h}=\mathbf{g}\circ \mathbf{f}$,  $\mathbf{h}=[(h_\nu,\chi)]=[(g_\nu\circ f_{\psi(\nu)},\phi\circ \psi)]$ is also a co-movable (strongly co-movable, uniformly co-movable) pro-morphism.
\end{prop}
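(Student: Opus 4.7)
The plan is to pick a co-movable (resp.\ strongly, uniformly co-movable) representative of $\mathbf{f}$, compose it with an arbitrary representative of $\mathbf{g}$ to obtain the canonical representative $(h_\nu,\chi)=(g_\nu\circ f_{\psi(\nu)},\phi\circ\psi)$ of $\mathbf{h}$, and then verify directly that this representative inherits the three co-movability properties. The key observation, which makes the argument short, is that the $r$ (or the pro-morphism $\mathbf{r}$) provided by the co-movability of $(f_\mu,\phi)$ at the source index $\psi(\nu)$ can be reused verbatim for $(h_\nu,\chi)$ at the index $\nu$, because $g_\nu$ is applied on the left of both $f_{\psi(\nu),\lambda}$ and $f_{\psi(\nu),\lambda'}\circ r$.

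More concretely, given $\nu\in N$, I would apply Definition \ref{def.5.1} to $\mu:=\psi(\nu)\in M$ and $(f_\mu,\phi)$ to obtain a co-movability index $\lambda\in\Lambda$ with $\lambda\geq \phi(\psi(\nu))=\chi(\nu)$. For any $\lambda'\geq \chi(\nu)=\phi(\psi(\nu))$ the co-movability of $(f_\mu,\phi)$ yields $r:X_\lambda\to X_{\lambda'}$ with
\[
f_{\psi(\nu)\lambda}=f_{\psi(\nu)\lambda'}\circ r.
\]
Composing with $g_\nu$ on the left and using $h_{\nu\sigma}=g_\nu\circ f_{\psi(\nu)\sigma}$ for $\sigma\in\{\lambda,\lambda'\}$ immediately gives $h_{\nu\lambda}=h_{\nu\lambda'}\circ r$, so $\lambda$ is a co-movability index of $\nu$ with respect to $(h_\nu,\chi)$.

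For the strong case, the same $r$ also satisfies $r\circ p_{\lambda\lambda^\ast}=p_{\lambda'\lambda^\ast}$ for some $\lambda^\ast\geq \lambda,\lambda'$, which is a condition purely on the bonding maps of $\mathbf{X}$ and therefore transfers without change to $(h_\nu,\chi)$. For the uniform case, co-movability of $(f_\mu,\phi)$ provides a pro-morphism $\mathbf{r}:X_\lambda\to\mathbf{X}$ with $f_{\psi(\nu)\lambda}=\mathbf{f}_{\psi(\nu)}\circ\mathbf{r}$; since $\mathbf{h}_\nu=g_\nu\circ\mathbf{f}_{\psi(\nu)}$, the same $\mathbf{r}$ witnesses $h_{\nu\lambda}=\mathbf{h}_\nu\circ\mathbf{r}$.

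The one bookkeeping point that needs care — and is the only candidate for a genuine obstacle — is the passage from morphisms of inverse systems to pro-morphisms. Specifically, we must ensure that composing a chosen co-movable representative $(f_\mu,\phi)$ of $\mathbf{f}$ with an arbitrary representative $(g_\nu,\psi)$ of $\mathbf{g}$ really produces a representative of $\mathbf{h}$ in the form $(g_\nu\circ f_{\psi(\nu)},\phi\circ\psi)$. This is standard (cf.\ composition in pro-$\mathcal{C}$ in \cite{Mard-Segal2}, Ch.\ I, \S 1.1); combined with Propositions \ref{prop.5.10} and \ref{prop.5.11}, which guarantee that the three co-movability properties are invariant under the equivalence relation $\sim$, the property of $\mathbf{h}$ does not depend on the choice of representative. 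Hence Definition \ref{defn.5.12} applies and $\mathbf{h}$ is co-movable (resp.\ strongly, uniformly co-movable).
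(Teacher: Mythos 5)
Your proposal is correct and follows essentially the same route as the paper: take a co-movability index $\lambda$ of $\psi(\nu)$ for the chosen representative $(f_\mu,\phi)$, observe that composing the defining relation $f_{\psi(\nu)\lambda}=f_{\psi(\nu)\lambda'}\circ r$ with $g_\nu$ on the left yields $h_{\nu\lambda}=h_{\nu\lambda'}\circ r$, and note that the strong and uniform cases carry over with the same $r$ (respectively the same $\mathbf{r}$). Your extra remark on well-definedness via Propositions \ref{prop.5.10} and \ref{prop.5.11} is a sound piece of bookkeeping that the paper leaves implicit.
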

\begin{proof}
Suppose that $(f_\mu,\phi)$ is co-movable. Let $\nu\in N$ an arbitrary index. Consider $\psi(\nu)\in M$ and denote by $\lambda\in \Lambda$, $\lambda\geq \phi(\psi(\nu))$ a co-movability index of $\psi(\nu)$ relative to $(f_\mu,\phi)$. We can prove that $\lambda$ is a co-movability index of $\nu$ relative to $(h_\nu,\chi)$. Indeed, if $\lambda'\geq \chi(\nu)=\phi(\psi(\nu))$ then there is a morphism $r:X_{\lambda}\rightarrow X_{\lambda'}$ such that
$$ f_{\psi(\nu)\lambda}=f_{\psi(\nu)\lambda'}\circ r.$$
Then $h_{\nu\lambda'}\circ r=h_\nu\circ p_{\phi(\psi(\nu))\lambda'}\circ r=g_\nu\circ (f_{\psi(\nu)}\circ p_{\phi(\psi(\nu))\lambda'})\circ r= g_\nu\circ (f_{\psi(\nu)\lambda'}\circ r)=g_\nu\circ f_{\psi(\nu)\lambda}=(g_\nu\circ f_{\psi(\nu)})\circ p_{\phi(\psi(\nu))\lambda}=h_\nu\circ p_{\chi(\nu)\lambda}=h_{\nu\lambda}$.

Obviously that if $\lambda$ is strong co-movability index of $\psi(\nu)$ relative to $(f_\mu,\phi)$ it is also a strong co-movability index of $\nu$ relative to $(h_\nu,\chi)$.

The case of uniform co-movability is absolutely similar.
\end{proof}
\begin{prop}\label{prop.5.14}
Let in the category pro-$\mathcal{C}$ be the morphisms $\mathbf{f}=[(f_\mu,\phi)]:\mathbf{X}=(X_\lambda,p_{\lambda\lambda'},\Lambda)\rightarrow \mathbf{Y}=(Y_\mu,q_{\mu,\mu'},M)$, $\mathbf{g}=[(g_\nu,\psi)]:\mathbf{Y}\rightarrow \mathbf{Z}=(Z_\nu,r_{\nu\nu'},N)$  and $\mathbf{h}=\mathbf{g}\circ \mathbf{f}=[(h_\nu=g_\nu\circ f_{\phi(\mu)},\chi=\phi\circ\psi)]:\mathbf{X}\rightarrow \mathbf{Z}$. Suppose that $\mathbf{h}$ is a co-movable (strictly co-movable, uniformly co-movable) pro-morphism and that $\mathbf{f}$ admits a right inverse $\mathbf{f}'=[(f'_\lambda,\phi')]:\mathbf{Y}\rightarrow \mathbf{X}$, $\mathbf{f}\circ \mathbf{f'}=1_{\mathbf{Y}}$. Then $\mathbf{g}$ is a co-movable (strictly co-movable, uniformly co-movable) pro-morphism.
\end{prop}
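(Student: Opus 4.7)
The starting point is the identity $\mathbf{g} = \mathbf{g}\circ \mathbf{f}\circ \mathbf{f}' = \mathbf{h}\circ \mathbf{f}'$ in pro-$\mathcal{C}$. The plan is to transfer the co-movability data of $\mathbf{h}$ to $\mathbf{g}$ by pulling back along $\mathbf{f}'$ and then pushing forward along $\mathbf{f}$. Working with the given representatives $(f_\mu,\phi)$, $(f'_\lambda,\phi')$, $(g_\nu,\psi)$ and the induced representative $(h_\nu,\chi)=(g_\nu\circ f_{\psi(\nu)},\phi\circ\psi)$ of $\mathbf{h}$, the equivalence $\mathbf{f}\circ \mathbf{f}' = 1_\mathbf{Y}$ unpacks to say that every $\mu\in M$ admits $\mu^*\geq \mu,\phi'(\phi(\mu))$ with
\[
f_\mu\circ f'_{\phi(\mu)}\circ q_{\phi'(\phi(\mu))\mu^*} = q_{\mu\mu^*}.
\]
This identity collapses occurrences of $f\circ f'$ into an identity on $\mathbf{Y}$ and is the central bridge for the whole argument.

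Fix $\nu\in N$ and a co-movability index $\lambda\geq \chi(\nu)$ of $\nu$ relative to $(h_\nu,\chi)$, and apply the displayed equivalence to $\mu=\psi(\nu)$ to produce a corresponding $\mu^*$. The candidate co-movability index for $\nu$ relative to $(g_\nu,\psi)$ is $\mu := \max\{\mu^*,\phi'(\lambda),\psi(\nu)\}$, which depends only on $\nu$. Given $\mu'\geq \psi(\nu)$, set $\lambda' := \phi(\mu')\geq \chi(\nu)$ and extract $r_0 : X_\lambda\to X_{\phi(\mu')}$ from co-movability of $(h_\nu,\chi)$, so $h_{\nu\lambda}=h_{\nu\phi(\mu')}\circ r_0$. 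Define $r := f_{\mu'}\circ r_0 \circ f'_\lambda\circ q_{\phi'(\lambda)\mu} : Y_\mu\to Y_{\mu'}$. The verification of $g_{\nu\mu}=g_{\nu\mu'}\circ r$ proceeds by substituting in turn the coherence $q_{\psi(\nu)\mu'}\circ f_{\mu'} = f_{\psi(\nu)}\circ p_{\chi(\nu)\phi(\mu')}$ for $(f_\mu,\phi)$, the co-movability identity for $r_0$, the coherence $p_{\chi(\nu)\lambda}\circ f'_\lambda = f'_{\chi(\nu)}\circ q_{\phi'(\chi(\nu))\phi'(\lambda)}$ for $(f'_\lambda,\phi')$, and finally the displayed equivalence at $\mu^*$.

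The strongly co-movable case takes in addition an index $\lambda^*\geq \lambda,\phi(\mu')$ with $r_0\circ p_{\lambda\lambda^*}=p_{\phi(\mu')\lambda^*}$ supplied by strong co-movability of $(h_\nu,\chi)$, and chooses $\mu^{**}\geq \mu,\mu',\phi'(\lambda^*),\tilde\mu'$, where $\tilde\mu'$ witnesses the displayed equivalence at $\mu'$. Unfolding $r\circ q_{\mu\mu^{**}}$ via the coherence of $(f'_\lambda,\phi')$, the strong identity for $r_0$, and the equivalence at $\mu'$ yields $q_{\mu'\mu^{**}}$, as required. The uniformly co-movable case is completely parallel: replace $r_0$ by a pro-morphism $\mathbf{r}_0 : X_\lambda\to\mathbf{X}$ satisfying $h_{\nu\lambda}=\mathbf{h}_\nu\circ \mathbf{r}_0$, and set $\mathbf{r} := \mathbf{f}\circ \mathbf{r}_0\circ f'_\lambda\circ q_{\phi'(\lambda)\mu} : Y_\mu\to \mathbf{Y}$; the identity $g_{\nu\mu}=\mathbf{g}_\nu\circ \mathbf{r}$ follows from exactly the same substitutions, now carried out in pro-$\mathcal{C}$. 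The main obstacle is purely organizational: the argument threads together the coherence relations for both $(f_\mu,\phi)$ and $(f'_\lambda,\phi')$, the co-movability factorization of $(h_\nu,\chi)$, and the equivalence witnessing $\mathbf{f}\circ \mathbf{f}' = 1_\mathbf{Y}$, and the calculation only closes once $\mu$ (and, in the strong case, $\mu^{**}$) is taken large enough to absorb all four compatibility conditions simultaneously.
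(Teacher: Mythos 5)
Your overall strategy coincides with the paper's: write $\mathbf{g}=\mathbf{h}\circ\mathbf{f}'$, take as candidate index a $\mu\geq\psi(\nu),\phi'(\lambda)$ large enough to absorb the equivalence $\mathbf{f}\circ\mathbf{f}'=1_{\mathbf{Y}}$, and define the required morphism $Y_\mu\to Y_{\mu'}$ as $f_{\mu'}\circ(\text{co-movability datum of }\mathbf{h})\circ f'_\lambda\circ q_{\phi'(\lambda)\mu}$, with the uniform case obtained by replacing the co-movability datum by the pro-morphism $\mathbf{r}_0$ and pushing forward with $\mathbf{f}$; this is exactly the paper's construction of $\rho$ (resp.\ $\boldsymbol{\rho}$). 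However, there is a genuine flaw in your bookkeeping at the step ``set $\lambda':=\phi(\mu')\geq\chi(\nu)$.'' First, $\phi$ is merely an index function (not assumed increasing), so $\mu'\geq\psi(\nu)$ does not give $\phi(\mu')\geq\chi(\nu)$; without that inequality $h_{\nu\phi(\mu')}$ is not even defined and co-movability of $(h_\nu,\chi)$ cannot be invoked at $\phi(\mu')$. Second, and independently of monotonicity, you then apply the coherence of $(f_\mu,\phi)$ ``on the nose'' as $q_{\psi(\nu)\mu'}\circ f_{\mu'}=f_{\psi(\nu)}\circ p_{\chi(\nu)\phi(\mu')}$; the definition of a morphism of inverse systems only guarantees this after composing on the right with a bonding morphism $p_{\phi(\mu')\lambda''}$ for some sufficiently large $\lambda''$. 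In your formula the morphism standing immediately to the right of $f_{\mu'}$ is $r_0:X_\lambda\to X_{\phi(\mu')}$, which does not factor through any such bonding morphism, so this substitution fails as written; nor can it be rescued by enlarging $\mu$, since the needed coherence index lives in $\Lambda$ and depends on $\mu'$, which is given after $\mu$.

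The repair is precisely what the paper does: keep $\lambda'$ free, choose $\lambda'\geq\phi(\mu'),\chi(\nu)$ large enough that $q_{\psi(\nu)\mu'}\circ f_{\mu'}\circ p_{\phi(\mu')\lambda'}=f_{\psi(\nu)}\circ p_{\chi(\nu)\lambda'}$, use co-movability of $(h_\nu,\chi)$ at this $\lambda'$ to get $r:X_\lambda\to X_{\lambda'}$, and insert the bonding map into the definition, $\rho=f_{\mu'}\circ p_{\phi(\mu')\lambda'}\circ r\circ f'_\lambda\circ q_{\phi'(\lambda)\mu}$; then the chain of substitutions you describe (coherence of $(f_\mu,\phi)$, co-movability of $\mathbf{h}$, coherence of $(f'_\lambda,\phi')$, and the equivalence $\mathbf{f}\circ\mathbf{f}'=1_{\mathbf{Y}}$) goes through verbatim, and the same correction propagates to your strong and uniform cases. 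So the idea is right and matches the paper, but as stated the specific choice $\lambda'=\phi(\mu')$ makes two steps invalid.
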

\begin{proof}
We give in detail the proof for the co-movability case. Consider $\nu\in N$ and let $\lambda\in \Lambda$, $\lambda\geq \phi(\psi(\nu))$ a co-movability index for $\nu$ relative to $(h_\nu,\chi)$. If we consider an index $\mu\in M, \mu\geq \psi(\nu),\phi'(\lambda')$, then we can prove that $\mu$ is a co-movability index for $\nu$ relative to $(g_\nu,\psi)$. If $\mu'\in M, \mu'\geq \psi(\nu)$, consider $\lambda'\in \Lambda$, $\lambda'\geq \phi(\mu'),\phi(\psi(\nu))$. Then there exists a morphism $r:X_{\lambda}\rightarrow X_{\lambda'}$ such that $h_{\nu\lambda}=h_{\nu\lambda'}\circ r$, i.e.,
$$ g_\nu\circ f_{\psi(\nu)}\circ p_{\phi(\psi(\nu))\lambda}=g_\nu\circ f_{\psi(\nu)}\circ p_{\phi(\psi(\nu))\lambda'} \circ r.  $$
Now we define $\rho:Y_{\mu}\rightarrow Y_{\mu'}$ by
$$ \rho:= f_{\mu'}\circ p_{\phi(\mu')\lambda'}\circ r\circ f'_{\lambda}\circ q_{\phi'(\lambda)\mu} .  $$
For this we have: $g_{\nu\mu'}\circ \rho=g_\nu\circ q_{\psi(\nu)\mu'}\circ f_{\mu'}\circ p_{\phi(\mu')\lambda'}\circ r\circ f'_{\lambda}\circ q_{\phi'(\lambda)\mu}$. Now if we suppose $\lambda'$ sufficiently large and we transcribe the relation $\mathbf{f}\circ \mathbf{f}'=1_{\mathbf{Y}}$ by
$$f_{\psi(\nu)}\circ p_{\phi(\psi(\nu))\lambda}\circ f'_{\lambda}\circ q_{\phi'(\lambda)\mu}=q_{\psi(\nu)\mu}, $$  we continue as $g_{\nu\mu'}\circ \rho=g_\nu\circ f_{\psi(\nu)}\circ p_{\phi(\psi(\nu))\lambda}\circ f'_{\lambda}\circ q_{\phi'(\lambda)\mu}=g_\nu\circ q_{\psi(\nu)\mu}=g_{\nu\mu}$.

If there exists $\lambda^\ast\geq \lambda,\lambda'$ such that $r\circ p_{\lambda\lambda^\ast}=p_{\lambda'\lambda^\ast}$ , consider $\mu^\ast\geq \mu,\mu',\phi'(\lambda^\ast)$. For this index we have:$\rho\circ q_{\mu\mu^\ast}=f_{\mu'}\circ p_{\phi(\mu')\lambda'}\circ r\circ f'_{\lambda}\circ q_{\phi(\lambda)\mu}\circ q_{\mu\mu^\ast}=f_{\mu'}\circ p_{\phi(\mu')\lambda'}\circ r\circ f'_{\lambda}\circ q_{\phi'(\lambda)\mu^\ast}=f_{\mu'}\circ p_{\phi(\mu')\lambda'}\circ p_{\lambda\lambda^\ast}\circ p_{\lambda\lambda^\ast}\circ f'_{\lambda^\ast}\circ q_{\phi'(\lambda^\ast)\mu^\ast}=f_{\mu'}\circ p_{\phi(\mu')\lambda^\ast}\circ f'_{\lambda^\ast}\circ q_{\phi'(\lambda^\ast)\mu^\ast}=q_{\mu'\mu^\ast}$.

In the case of uniform co-movability, by hypothesis we have $h_{\nu\lambda}=\mathbf{h}_\nu\circ \mathbf{r}$ for $\mathbf{r}:X_{\lambda}\rightarrow \mathbf{X}$ a pro-morphism, and we define $\mathbf{\rho}:Y_{\mu}\rightarrow \mathbf{Y}$ by $\mathbf{\rho}=\mathbf{f}\circ \mathbf{r}\circ f'_{\lambda}\circ q_{\phi'(\lambda)\mu}$. For this we obtain $g_{\nu\mu}=\mathbf{g}_\nu \circ \mathbf{\rho}$.
\end{proof}
\begin{cor}\label{cor.5.15}
Let in the category pro-$\mathcal{C}$ be the composition  $\mathbf{h}=\mathbf{g}\circ \mathbf{f}$ . If $\mathbf{g}$ is a co-movable(strongly co-movable, uniformly co-movable)pro-morphism and $\mathbf{f}$ is an isomorphism, then $\mathbf{h}$ is a co-movable(strongly co-movable, uniformly co-movable)  pro-morphism.
\end{cor}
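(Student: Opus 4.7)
The plan is to reduce this corollary directly to Proposition \ref{prop.5.14}, exploiting the invertibility of $\mathbf{f}$. The key identity is that, since $\mathbf{f}^{-1}$ exists in pro-$\mathcal{C}$, composing both sides of $\mathbf{h}=\mathbf{g}\circ\mathbf{f}$ on the right with $\mathbf{f}^{-1}$ yields $\mathbf{g}=\mathbf{h}\circ\mathbf{f}^{-1}$. This presents $\mathbf{g}$ as a two-factor composition whose inner factor, namely $\mathbf{f}^{-1}:\mathbf{Y}\rightarrow\mathbf{X}$, admits a right inverse: indeed $\mathbf{f}^{-1}\circ\mathbf{f}=1_{\mathbf{X}}$, so $\mathbf{f}$ itself is a right inverse of $\mathbf{f}^{-1}$.

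One then applies Proposition \ref{prop.5.14} to the decomposition $\mathbf{g}=\mathbf{h}\circ\mathbf{f}^{-1}$, with the relabeling: the ``$\mathbf{h}$'' of Proposition \ref{prop.5.14} plays the role of our $\mathbf{g}$, the ``$\mathbf{g}$'' of that proposition plays the role of our $\mathbf{h}$, and the ``$\mathbf{f}$'' of that proposition plays the role of our $\mathbf{f}^{-1}$. The two required hypotheses are both immediate: our $\mathbf{g}$ is co-movable (strongly co-movable, uniformly co-movable) by assumption, and our $\mathbf{f}^{-1}$ admits $\mathbf{f}$ as right inverse by the first paragraph. Proposition \ref{prop.5.14} then yields that $\mathbf{h}$ is co-movable (strongly co-movable, uniformly co-movable), which is exactly the conclusion of the corollary.

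Because Proposition \ref{prop.5.14} is formulated and proved uniformly for all three variants of co-movability, the same argument works verbatim in each case, and no additional work is required for the strong or uniform settings. There is essentially no genuine obstacle here: the entire content of the proof is the observation that the invertibility of $\mathbf{f}$ allows one to interchange the roles of outer and inner factor in the composition, so that a result (Proposition \ref{prop.5.14}) which propagates co-movability from $\mathbf{h}$ to the outer factor $\mathbf{g}$ can be used backwards to propagate co-movability from the outer factor $\mathbf{g}$ to the total composition $\mathbf{h}$.
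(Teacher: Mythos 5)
Your proof is correct and is essentially the paper's own argument: the paper likewise writes $\mathbf{g}=\mathbf{h}\circ\mathbf{f}^{-1}$ and reduces the corollary to the preceding material on right inverses. Note only that the paper's one-line proof cites Proposition \ref{prop.5.13}, which transfers co-movability from the \emph{inner} factor to the composite and cannot give the conclusion here; your appeal to Proposition \ref{prop.5.14} (composite co-movable and inner factor admitting a right inverse imply the outer factor is co-movable), applied to the decomposition $\mathbf{g}=\mathbf{h}\circ\mathbf{f}^{-1}$ with $\mathbf{f}$ as the right inverse of $\mathbf{f}^{-1}$, is the correct reference, so the paper's citation is evidently a slip.
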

\begin{proof}
We write $\mathbf{g}=\mathbf{h}\circ \mathbf{f}^{-1} $ and apply Proposition \ref{prop.5.13}.
\end{proof}
\begin{cor}\label{cor.5.16}
If there exists a  co-movable (strongly co-movable, uniformly co-movable)pro-morphism  $\mathbf{f}:\mathbf{X}\rightarrow \mathbf{Y}$ which admits a left inverse, then $\mathbf{X}$ is movable (strongly movable, uniformly movable) system. If $\mathbf{f}$ is an isomorphism, then $\mathbf{Y}$ is also movable (strongly movable, uniformly movable) system and $\mathbf{f}^{-1}$ is a co-movable (strongly co-movable, uniformly co-movable) pro-morphism.
\end{cor}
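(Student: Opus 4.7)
The plan is a short bookkeeping argument that combines Theorem \ref{thm.5.7}, Proposition \ref{prop.5.13}, and the earlier domination results (Corollary \ref{cor.2.13}, its strong-movability extension noted in Remark \ref{rem.2.25}, and Corollary \ref{cor.3.14} for the uniform case).

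For the first assertion, suppose $\mathbf{f}:\mathbf{X}\to\mathbf{Y}$ is co-movable and $\mathbf{g}:\mathbf{Y}\to\mathbf{X}$ is a left inverse, so that $\mathbf{g}\circ\mathbf{f}=1_{\mathbf{X}}$. Since $\mathbf{f}$ sits in the right-hand slot of this composition and is co-movable, Proposition \ref{prop.5.13} tells us that $\mathbf{g}\circ\mathbf{f}=1_{\mathbf{X}}$ is co-movable; Theorem \ref{thm.5.7} then gives that $\mathbf{X}$ is movable. The same argument, citing the strong or uniform clauses of Proposition \ref{prop.5.13} and Theorem \ref{thm.5.7}, handles the strongly and uniformly co-movable cases.

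For the second assertion, assume $\mathbf{f}$ is an isomorphism. Applying the first part with left inverse $\mathbf{f}^{-1}$ yields $\mathbf{X}$ movable. Since $\mathbf{f}$ and $\mathbf{f}^{-1}$ exhibit the relation $\mathbf{Y}\leq\mathbf{X}$ in pro-$\mathcal{C}$, Corollary \ref{cor.2.13} (with Remark \ref{rem.2.25} for the strong case and Corollary \ref{cor.3.14} for the uniform one) shows that $\mathbf{Y}$ is movable. Theorem \ref{thm.5.7} then says $1_{\mathbf{Y}}$ is co-movable, and writing $\mathbf{f}^{-1}=\mathbf{f}^{-1}\circ 1_{\mathbf{Y}}$ and invoking Proposition \ref{prop.5.13} once more (with $1_{\mathbf{Y}}$ playing the role of the co-movable factor) gives that $\mathbf{f}^{-1}$ is co-movable.

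I expect no substantive obstacle, since every tool is already in place; the only trap to avoid is the tempting but circular route of trying to conclude $\mathbf{f}^{-1}$ is co-movable by applying the first part of the corollary to $\mathbf{f}^{-1}$ with $\mathbf{f}$ as a left inverse, which would presuppose precisely what we want to prove. The detour through ``$\mathbf{Y}$ is movable'' via the domination corollary is therefore essential, and this is also the point that forces the separate citation of the strong/uniform analogs of Corollary \ref{cor.2.13}.
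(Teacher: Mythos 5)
Your argument is correct and is essentially the intended one: the paper states Corollary \ref{cor.5.16} without proof, and the evident derivation is exactly yours --- Proposition \ref{prop.5.13} applied to $\mathbf{g}\circ\mathbf{f}=1_{\mathbf{X}}$ together with Theorem \ref{thm.5.7} for the first claim, then the domination results (Corollary \ref{cor.2.13}, Remark \ref{rem.2.25}, Corollary \ref{cor.3.14}) to get $\mathbf{Y}$ movable, and Proposition \ref{prop.5.13} once more on $\mathbf{f}^{-1}=\mathbf{f}^{-1}\circ 1_{\mathbf{Y}}$. (Once $\mathbf{Y}$ is known to be movable, Proposition \ref{prop.5.5} would also give the co-movability of $\mathbf{f}^{-1}$ directly, but your route through $1_{\mathbf{Y}}$ is equally valid, and you are right that applying the first part to $\mathbf{f}^{-1}$ would be circular.)
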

\begin{cor}\label{cor.5.17}
Consider a commutative diagram in pro-$\mathcal{C}$
\[\xymatrix{\mathbf{X}\ar[d]_{\mathbf{i}} \ar[r]^{\mathbf{f}} & {\mathbf{Y}}\ar[d]^{\mathbf{j}}\\
{\mathbf{X}'}\ar[r]_{\mathbf{f}'} & {\mathbf{Y}'}}\]
with $\mathbf{i}$ and $\mathbf{j}$ isomorphisms and $\mathbf{f}$ a co-movable (strongly co-movable, uniformly co-movable) morphism. Then $\mathbf{f'}$ is co-movable (strongly co-movable, uniformly co-movable)pro-morphism. .
\end{cor}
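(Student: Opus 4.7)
The plan is to reduce Corollary \ref{cor.5.17} to the previously established closure properties of co-movable pro-morphisms under composition, by exploiting the invertibility of $\mathbf{i}$ and $\mathbf{j}$ to solve for $\mathbf{f}'$ in terms of $\mathbf{f}$.

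First I would observe that the commutativity of the square, $\mathbf{f}'\circ \mathbf{i}=\mathbf{j}\circ \mathbf{f}$, together with the fact that $\mathbf{i}$ is an isomorphism, yields the explicit formula
\[
\mathbf{f}' \;=\; \mathbf{j}\circ \mathbf{f}\circ \mathbf{i}^{-1}.
\]
This reduces the problem to showing that this composite is co-movable (respectively strongly or uniformly co-movable) whenever $\mathbf{f}$ is.

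Next I would apply Proposition \ref{prop.5.13} to the pair $(\mathbf{f},\mathbf{j})$: since $\mathbf{f}$ is co-movable (strongly co-movable, uniformly co-movable) and composition on the left with an arbitrary pro-morphism preserves this property, the pro-morphism $\mathbf{j}\circ \mathbf{f}$ is again co-movable (strongly co-movable, uniformly co-movable). Then, writing $\mathbf{f}'=(\mathbf{j}\circ \mathbf{f})\circ \mathbf{i}^{-1}$ and using that $\mathbf{i}^{-1}$ is an isomorphism, Corollary \ref{cor.5.15} immediately delivers the desired conclusion that $\mathbf{f}'$ is co-movable (strongly co-movable, uniformly co-movable).

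There is essentially no obstacle: once the commutative square is rewritten as $\mathbf{f}'=\mathbf{j}\circ \mathbf{f}\circ \mathbf{i}^{-1}$, the statement is a two-line consequence of Proposition \ref{prop.5.13} followed by Corollary \ref{cor.5.15}, and the same chain of reasoning applies verbatim in the three variants (simple, strong, uniform) since both auxiliary results have been proved in all three flavors.
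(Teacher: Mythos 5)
Your argument is correct: writing $\mathbf{f}'=\mathbf{j}\circ\mathbf{f}\circ\mathbf{i}^{-1}$ from the commutative square, then applying Proposition \ref{prop.5.13} to get co-movability (strong, uniform) of $\mathbf{j}\circ\mathbf{f}$ and Corollary \ref{cor.5.15} to compose with the isomorphism $\mathbf{i}^{-1}$, is exactly the derivation the paper intends (it states Corollary \ref{cor.5.17} without writing out a proof, immediately after establishing those two results in all three flavors). Nothing is missing.
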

By Corollary \ref{cor.5.17} we can formulate the following definition.
\begin{defn}\label{def.5.18}
A shape morphism $F:X\rightarrow Y$ is \textsl{co-movable} (\textsl{strongly co-movable}, \textsl{uniformly co-movable}) if it can be represented by a co-movable (strongly co-movable, uniformly co-movable) pro-morphism $\mathbf{f:X\rightarrow Y}$.
\end{defn}
\begin{defn}\label{def.5.19}
Let $\mathbf{Sh}_{(\mathcal{T,P})}$ be a shape category and let $f\in \mathcal{T(}X,Y)$ be a morphism in $\mathcal{T}$. Then we say that $f$ is \textsl{shape co-movable (strongly co-movable, uniformly co-movable) morphism }if $F:=S(f)\in \mathbf{Sh}_{(\mathcal{T,P})}(X,Y)$ is a co-movable (strongly co-movable, uniformly co-movable) shape morphism.
\end{defn}
\begin{rem}\label{rem.5.21}
In the conditions of Remark \ref{rem.4.8}, the condition (5.2) of strong co-movability can be written as
\begin{equation}
r\circ p_{\lambda}=p_{\lambda'}
\end{equation}
\end{rem}
\begin{rem}\label{rem.5.22}
All properties of co-movability, strong co-movability  and uniform co-movability of morphism of inverse systems and of pro-morphisms can be  transferred to appropriate properties for shape morphisms and for morphisms in the category $\mathcal{T}$ of a shape theory $\mathbf{Sh}_{(\mathcal{T,P})}$.
\end{rem}

\section{Applications}

If $\mathbf{p}:X\rightarrow \mathbf{X},\mathbf{q}:Y\rightarrow \mathbf{Y}$ are inverse limits of $\mathbf{X},\mathbf{Y}\in $pro-$\mathcal{C}$, then a morphism $\mathbf{f}:\mathbf{X}\rightarrow \mathbf{Y}$ in pro-$\mathcal{C}$ induces a morphism $f:X\rightarrow Y$ in $\mathcal{C}$ satisfying $\mathbf{q}\circ f=\mathbf{f}\circ \mathbf{p}$. In general, if $f=$ lim $\mathbf{f}$ is an epimorphism in $\mathcal{C}$, $\mathbf{f}$ need not be an epimorphism in pro-$\mathcal{C}$ (see \cite{Mard-Segal2},Ch. II, \S 6.1). But if $\mathbf{Y}\in $ pro-$\mathcal{C}$ is uniformly movable, then the statement is true (see \cite{Mard-Segal2}, Ch. II, \S 6.1, Th.3, and \cite {Moszy}). The following theorem is a generalization of this result.
\begin{thm}\label{thm.6.1}
Let $\mathbf{f}=[(f_\mu,\phi)]:\mathbf{X}=(X_\lambda,p_{\lambda\lambda'},\Lambda)\rightarrow \mathbf{Y}=(Y_\mu,q_{\mu\mu'},M)$ be a morphism in pro-$\mathcal{C}$, let $\mathbf{p}=[(p_\lambda)]:X \rightarrow \mathbf{X}$, $\mathbf{q}=[(q_\mu)]:Y\rightarrow \mathbf{Y}$ be inverse limit in $\mathcal{C}$ and let $f=$lim  $\mathbf{f}:X\rightarrow Y$. If $f$ is an epimorphism in $\mathcal{C}$, $\mathbf{f}$ is uniformly movable and for each index $\mu\in M$ the composition $f_\mu\circ p_{\phi(\mu)}$ is an epimorphism in $\mathcal{C}$, then $\mathbf{f}$ is an epimorphism in pro-$\mathcal{C}$.

\end{thm}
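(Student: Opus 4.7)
The plan is to verify the defining condition for $\mathbf{f}$ to be an epimorphism in pro-$\mathcal{C}$: given any two pro-morphisms $\mathbf{g}=[(g_\nu,\psi)],\mathbf{g}'=[(g'_\nu,\psi')]:\mathbf{Y}\to\mathbf{Z}$ with $\mathbf{g}\circ\mathbf{f}=\mathbf{g}'\circ\mathbf{f}$, one must show $\mathbf{g}=\mathbf{g}'$; that is, for every $\nu\in N$ one must produce $\mu\in M$ with $\mu\geq\psi(\nu),\psi'(\nu)$ such that
\[g_\nu\circ q_{\psi(\nu)\mu}=g'_\nu\circ q_{\psi'(\nu)\mu}.\]

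The strategy is to use uniform movability of $\mathbf{f}$ to transport the equality guaranteed by $\mathbf{g}\circ\mathbf{f}\sim\mathbf{g}'\circ\mathbf{f}$ from the $\mathbf{X}$-side back to the $\mathbf{Y}$-side and then to cancel the hypothesised epimorphism $f_\mu\circ p_{\phi(\mu)}$. First I fix $\nu\in N$ and choose some $\mu\geq\psi(\nu),\psi'(\nu)$. Applying Definition~\ref{defn.3.2} to $\mathbf{f}$, I pick a uniform movability index $\lambda^*\geq\phi(\mu)$ for $\mu$ and a pro-morphism $\mathbf{u}:X_{\lambda^*}\to\mathbf{Y}$ with $f_{\mu\lambda^*}=\mathbf{q}_\mu\circ\mathbf{u}$; by Remark~\ref{rem.3.4}, $\mathbf{u}$ is the coherent family $u_{\mu'}:X_{\lambda^*}\to Y_{\mu'}$ satisfying $u_\mu=f_{\mu\lambda^*}$. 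In parallel, the equivalence $\mathbf{g}\circ\mathbf{f}\sim\mathbf{g}'\circ\mathbf{f}$, evaluated at $\nu$, supplies an index $\lambda_0\geq\phi(\psi(\nu)),\phi(\psi'(\nu))$ with $g_\nu\circ f_{\psi(\nu)\lambda_0}=g'_\nu\circ f_{\psi'(\nu)\lambda_0}$. Using Remark~\ref{rem.3.3} (upward closure of uniform movability indices) together with the standard compatibility of the morphism of inverse systems $(f_\mu,\phi)$ applied to the pairs $(\psi(\nu),\mu)$ and $(\psi'(\nu),\mu)$, I pass to a common larger index $\lambda$ that simultaneously serves as uniform movability index for $\mu$, satisfies $\lambda\geq\lambda_0$, and verifies $q_{\psi(\nu)\mu}\circ f_{\mu\lambda}=f_{\psi(\nu)\lambda}$ and $q_{\psi'(\nu)\mu}\circ f_{\mu\lambda}=f_{\psi'(\nu)\lambda}$. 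At this $\lambda$, the coherence of $\mathbf{u}$ gives $u_{\psi(\nu)}=q_{\psi(\nu)\mu}\circ u_\mu=f_{\psi(\nu)\lambda}$ and symmetrically $u_{\psi'(\nu)}=f_{\psi'(\nu)\lambda}$.

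The chain
\[g_\nu\circ q_{\psi(\nu)\mu}\circ u_\mu=g_\nu\circ u_{\psi(\nu)}=g_\nu\circ f_{\psi(\nu)\lambda}=g'_\nu\circ f_{\psi'(\nu)\lambda}=g'_\nu\circ u_{\psi'(\nu)}=g'_\nu\circ q_{\psi'(\nu)\mu}\circ u_\mu\]
then falls out. Post-composing with the inverse-limit projection $p_\lambda:X\to X_\lambda$ and using $u_\mu\circ p_\lambda=f_{\mu\lambda}\circ p_\lambda=f_\mu\circ p_{\phi(\mu)}$ converts this to
\[g_\nu\circ q_{\psi(\nu)\mu}\circ(f_\mu\circ p_{\phi(\mu)})=g'_\nu\circ q_{\psi'(\nu)\mu}\circ(f_\mu\circ p_{\phi(\mu)}),\]
and the hypothesis that $f_\mu\circ p_{\phi(\mu)}$ is an epimorphism in $\mathcal{C}$ cancels it to yield $g_\nu\circ q_{\psi(\nu)\mu}=g'_\nu\circ q_{\psi'(\nu)\mu}$, as required.

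The main obstacle I anticipate is the index bookkeeping above: one must find a single $\lambda\in\Lambda$ simultaneously witnessing uniform movability at $\mu$, dominating the equivalence witness $\lambda_0$, and verifying the two morphism-of-system compatibility relations. Since all three requirements are upward stable and $\Lambda$ is a cofinite directed set, a finite supremum handles this without further assumption. The cancellation of $f_\mu\circ p_{\phi(\mu)}$ is the only point where an epimorphism hypothesis enters substantively; the assumption that $f=\lim\mathbf{f}$ itself is an epimorphism does not appear to play a direct role in this argument and seems to be stated for parallelism with the classical Marde\v{s}i\'{c}--Segal result that Theorem~\ref{thm.6.1} generalizes.
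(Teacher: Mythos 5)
Your proof is correct, but it is not the paper's proof, and the difference is worth recording. The paper works at the level of the limit objects: from $\mathbf{g}\circ\mathbf{f}=\mathbf{g}'\circ\mathbf{f}$ and $\mathbf{f}\circ\mathbf{p}=\mathbf{q}\circ f$ it deduces $g_\nu\circ q_{\psi(\nu)}\circ f=g'_\nu\circ q_{\psi'(\nu)}\circ f$ and cancels the epimorphism $f$, obtaining $g_\nu\circ q_{\psi(\nu)}=g'_\nu\circ q_{\psi'(\nu)}$ as morphisms defined on $Y$; the uniform movability of $\mathbf{f}$ is then used to produce $\mathbf{u}:X_\lambda\rightarrow\mathbf{Y}$ and its limit $u=\lim\mathbf{u}:X_\lambda\rightarrow Y$ with $f_{\mu\lambda}=q_\mu\circ u$, and it is this $u$ that transports the limit-level equality back to the index $\mu$, giving $g_\nu\circ q_{\psi(\nu)\mu}\circ f_{\mu\lambda}=g'_\nu\circ q_{\psi'(\nu)\mu}\circ f_{\mu\lambda}$; only the last two steps (composing on the right with $p_\lambda$ and cancelling the epimorphism $f_\mu\circ p_{\phi(\mu)}$) coincide with yours. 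You never pass to $Y$: you read $\mathbf{g}\circ\mathbf{f}=\mathbf{g}'\circ\mathbf{f}$ directly as an equivalence of the composed representatives, which yields $g_\nu\circ f_{\psi(\nu)\lambda_0}=g'_\nu\circ f_{\psi'(\nu)\lambda_0}$, and after reindexing you cancel $f_\mu\circ p_{\phi(\mu)}$. Note that in your route the uniform movability does no real work: since $u_\mu=f_{\mu\lambda}$ and, with your choice of $\lambda$, $u_{\psi(\nu)}=q_{\psi(\nu)\mu}\circ f_{\mu\lambda}=f_{\psi(\nu)\lambda}$ (likewise for $\psi'(\nu)$), your displayed chain is literally $g_\nu\circ q_{\psi(\nu)\mu}\circ f_{\mu\lambda}=g_\nu\circ f_{\psi(\nu)\lambda}=g'_\nu\circ f_{\psi'(\nu)\lambda}=g'_\nu\circ q_{\psi'(\nu)\mu}\circ f_{\mu\lambda}$, which uses only the compatibility relations of $(f_\mu,\phi)$ and the index $\lambda_0$; so, exactly as you observed for the hypothesis that $f$ is an epimorphism, the uniform movability of $\mathbf{f}$ is also dispensable in your argument. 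Consequently your proof is more elementary and in fact establishes the formally stronger statement that the single hypothesis that every $f_\mu\circ p_{\phi(\mu)}$ is an epimorphism (together with the compatible projections $p_\lambda$) already forces $\mathbf{f}$ to be an epimorphism in pro-$\mathcal{C}$, whereas the paper's route is the one in which the epimorphism $f$ and the uniform movability genuinely intervene, following the pattern of the classical Moszy\'{n}ska--Marde\v{s}i\'{c}--Segal argument it generalizes, where no hypothesis on $f_\mu\circ p_{\phi(\mu)}$ is available and the cancellation must be performed at the limit level.
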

\begin{proof}
Let $\mathbf{g}=[(g_\nu,\psi)],\mathbf{g}'=[(g'_\nu,\psi')]:\mathbf{Y}\rightarrow \mathbf{Z}=(Z_\nu,r_{\nu\nu'},N)$ be morphisms in pro-$\mathcal{C}$ such that  $\mathbf{g}\circ \mathbf{f}=\mathbf{g}'\circ \mathbf{f}$. We must prove that $\mathbf{g}=\mathbf{g}'$. Since $\mathbf{f}\circ \mathbf{p}=\mathbf{q}\circ f$, it follows that $\mathbf{g}\circ \mathbf{q}\circ f=\mathbf{g}'\circ \mathbf{q}\circ f$, i.e., for every $\nu\in N$ one has
\begin{equation}
g_\nu\circ q_{\psi(\nu)}\circ f=g'_\nu\circ q_{\psi'(\nu)}\circ f
\end{equation}
and therefore
\begin{equation}
g_\nu\circ q_{\psi(\nu)}=g'_{\nu}\circ q_{\psi'(\nu)}.
\end{equation}
Choose $\mu\in M,\mu\geq \psi(\nu),\psi'(\nu)$, and let $\lambda\in \Lambda$, $\lambda\geq \phi(\mu)$ be a uniform movability index of $\mu$ with respect to $(f_\mu,\phi)$. Then there is a morphism $\mathbf{u}:X_{\lambda'}\rightarrow \mathbf{Y}$ in pro-$\mathcal{C}$ such that
\begin{equation}
f_{\mu\lambda}=\mathbf{q}_\mu\circ \mathbf{u}.
\end{equation}
Putting $u=$lim $ \mathbf{u}:X_{\lambda'}\rightarrow Y$, so that $\mathbf{q}\circ u=\mathbf{u}$,  we obtain
\begin{equation}
f_{\mu\lambda}=q_\mu\circ u.
\end{equation}
Consequently,
\begin{equation}
g_\nu\circ q_{\psi(\nu)\mu}\circ f_{\mu\lambda}=g_\nu\circ q_{\psi(\nu)\mu}\circ f_{\mu\lambda}=g_\nu\circ q_{\psi(\nu)}\circ q_\mu\circ u=g_\nu \circ q_{\psi(\nu)}\circ u.
\end{equation}
Similarly,
\begin{equation}
g'_\nu\circ q_{\psi'(\nu)\mu}\circ f_{\mu}\circ p_{\phi(\mu)\lambda}=g'_\nu\circ q_{\psi'(\nu)}\circ u,
\end{equation}
so that (6.2) implies
\begin{equation}
g_\nu\circ q_{\psi(\nu)\mu}\circ f_{\mu\lambda}=g'_\nu\circ q_{\psi'(\nu)\mu}\circ f_{\mu\lambda},
\end{equation}
and composing at right by $p_{\lambda}$ we have
\begin{equation}
g_\nu\circ q_{\psi(\nu)\mu}\circ (f_{\mu}\circ p_{\phi(\mu)})=g'_\nu\circ q_{\psi'(\nu)\mu}\circ (f_{\mu}\circ p_{\phi(\mu)}).
\end{equation}
From this, by the hypothesis this implies
\begin{equation}
g_\nu\circ q_{\psi(\nu)\mu}=g'_\nu\circ q_{\psi'(\nu)\mu},
\end{equation}
which shows that $(g_\nu,\psi)\sim (g'_\nu,\psi')$ and therefore $\mathbf{g}=\mathbf{g}'$ in pro-$\mathcal{C}$.
\end{proof}
\begin{rem}\label{rem.6.2}
The additional condition from Theorem \ref{thm.6.1}, that $f_{\mu}\circ p_{\phi(\mu)}$ is an epimorphism, depends only on the pro-morphism $\mathbf{f}$. Indeed, if $(f'_\mu,\phi')\sim (f_\mu,\phi)$, then for every $\mu\in M$ there exists $\lambda\geq \phi(\mu),\phi'(\mu)$ such that $f_\mu\circ p_{\phi(\mu)
\lambda}=f'_\mu\circ p_{\phi'(\mu)\lambda}$. From this, by composing at right with $p_\lambda$, we obtain $f_\mu\circ p_{\phi(\mu)}=f'_\mu\circ p_{\phi'(\mu)}$.
\end{rem}

We recall that if $\mathcal{C}$ is a category with zero-objects, then a morphism $f:A\rightarrow B$ of $C$ is a \textsl{weak} \textsl{epimorphism} if $u\circ f=0$ implies $u=0$. Weakened versions of the categorical notions of monomorphism and epimorphism have proved to be of some interest in pointed homotopy theory. In 1986, J.Roitberg \cite{Roitberg} used the properties of a remarkable group discovered by G. Higman \cite{Higman} to exhibit examples in the pointed homotopy category of weak epimorphisms which are not epimorphisms. Obviously that pro-Grp and pro-HTop$^\ast$ are categories with zero objects. Generally, if a category $\mathcal{C}$ is with zero-objects  the pro-category pro-$\mathcal{C}$ is with zero-objects.
Now the proof of Theorem \ref{thm.6.1} is perfectly adaptable to prove the following weakened version.
\begin{thm}\label{thm.6.3}
With the notations of Theorem \ref{thm.6.1} and supposing that the category $\mathcal{C}$ is with zero-objects, $f$ is a weak epimorphism in $\mathcal{C}$ and $\mathbf{f}$ is uniformly movable such that for each index $\mu\in M$ the composition $f_\mu\circ p_{\phi(\mu)}$ is a weak epimorphism in $\mathcal{C}$, then $\mathbf{f}$ is a weak  epimorphism in pro-$\mathcal{C}$.
\end{thm}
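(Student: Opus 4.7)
The plan is to mirror the proof of Theorem \ref{thm.6.1} almost verbatim, replacing the second morphism $\mathbf{g}'$ everywhere by the zero morphism. Concretely, I would take a pro-morphism $\mathbf{g}=[(g_\nu,\psi)]:\mathbf{Y}\rightarrow \mathbf{Z}$ with $\mathbf{g}\circ \mathbf{f}=0$ in pro-$\mathcal{C}$ and aim to show $\mathbf{g}=0$. Before starting, I would record the standard fact that zero morphisms in pro-$\mathcal{C}$ unwind at the level of representatives: $\mathbf{g}=0$ exactly when for each $\nu\in N$ there exists $\mu\geq \psi(\nu)$ with $g_\nu\circ q_{\psi(\nu)\mu}=0$. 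This is the local condition the proof must deliver for an arbitrary $\nu$.

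First, I would use $\mathbf{f}\circ \mathbf{p}=\mathbf{q}\circ f$ to rewrite $\mathbf{g}\circ \mathbf{f}\circ \mathbf{p}=0$ as $\mathbf{g}\circ \mathbf{q}\circ f=0$, which at the $\nu$-th component reads $g_\nu\circ q_{\psi(\nu)}\circ f=0$ in $\mathcal{C}$. Since $f$ is a weak epimorphism, this collapses to $g_\nu\circ q_{\psi(\nu)}=0$, the exact analogue of relation (6.2) in the proof of Theorem \ref{thm.6.1}. Then, fixing $\mu\geq \psi(\nu)$ and a uniform movability index $\lambda\geq \phi(\mu)$ of $\mu$ with respect to $(f_\mu,\phi)$, I would invoke uniform movability to obtain $\mathbf{u}:X_\lambda\rightarrow \mathbf{Y}$ with $f_{\mu\lambda}=\mathbf{q}_\mu\circ \mathbf{u}$, set $u=$lim $\mathbf{u}$, and derive $f_{\mu\lambda}=q_\mu\circ u$, as in relations (6.3) and (6.4).

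The crucial computation would then be
$$g_\nu\circ q_{\psi(\nu)\mu}\circ f_{\mu\lambda}=g_\nu\circ q_{\psi(\nu)}\circ u=0\circ u=0,$$
after which composing at right with $p_\lambda:X\rightarrow X_\lambda$ yields $g_\nu\circ q_{\psi(\nu)\mu}\circ (f_\mu\circ p_{\phi(\mu)})=0$. Applying the hypothesis that $f_\mu\circ p_{\phi(\mu)}$ is a weak epimorphism in $\mathcal{C}$ then gives $g_\nu\circ q_{\psi(\nu)\mu}=0$, which is precisely the local condition needed to conclude $\mathbf{g}=0$.

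The main obstacle I anticipate is not the chain of equalities, which is essentially identical to that of Theorem \ref{thm.6.1}, but the preliminary book-keeping: making precise that the zero-object structure on $\mathcal{C}$ lifts to pro-$\mathcal{C}$ in the naive way (so that $\mathbf{g}=0$ admits the "eventually zero" local description used above) and checking that a morphism out of a rudimentary system to $\mathbf{Z}$ is zero iff each of its $\nu$-components is zero in $\mathcal{C}$. Once this is settled, the two uses of the weak-epimorphism hypothesis (first for $f$, then for $f_\mu\circ p_{\phi(\mu)}$) slot into exactly the positions where the proof of Theorem \ref{thm.6.1} used the corresponding epimorphism hypotheses, and the conclusion follows.
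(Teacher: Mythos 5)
Your proposal is correct and follows exactly the route the paper intends: the paper's own "proof" of Theorem \ref{thm.6.3} is simply the remark that the argument for Theorem \ref{thm.6.1} is perfectly adaptable, and your adaptation—taking $\mathbf{g}\circ\mathbf{f}=0$, using the weak-epimorphism hypotheses on $f$ and on $f_\mu\circ p_{\phi(\mu)}$ in place of the epimorphism hypotheses, and unwinding the zero pro-morphism via the componentwise description—is precisely that adaptation, including the correct book-keeping about zero objects in pro-$\mathcal{C}$.
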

\begin{rem}\label{rem.6.4}
If $\mathbf{Y} $ is movable, then we obtain a weakened version of Moszynska' Theorem \cite{Moszy} (see also \cite{Mard-Segal2},Ch. II, \S 6.1, Th.3).
\end{rem}
\begin{cor}\label{cor.6.5}
Let $\mathbf{f}=[(f_\mu)]:X\rightarrow \mathbf{Y}=(Y_\mu,q_{\mu\mu'},M)$ be a morphism in pro-$\mathcal{C}$ and $f=lim \mathbf{f}:X\rightarrow Y=lim \mathbf{Y}$. If $f$ and every $f_\mu:X\rightarrow Y_\mu, \mu\in M $ are epimorphisms (weak epimorphisms)in $\mathcal{C}$, then $\mathbf{f}$ is an epimorphism(a weak epimorphism)in pro-$\mathcal{C}$.

Particularly, if $\mathbf{p}=(p_\lambda):X\rightarrow \mathbf{X}=(X_\lambda,p_{\lambda\lambda'},\Lambda)$ is an inverse limit with all projections $p_\lambda$ being epimorphisms(weak epimorphisms)  in $\mathcal{C}$, then $\mathbf{p}$ is an epimorphism (weak epimorphism)  in pro-$\mathcal{C}$.

\end{cor}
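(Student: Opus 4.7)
The plan is to recognize that this corollary is essentially Theorems \ref{thm.6.1} and \ref{thm.6.3} specialized to the case where the source pro-object is rudimentary, namely the single-object system $(X)$. I will verify that all three hypotheses of Theorem \ref{thm.6.1} (respectively Theorem \ref{thm.6.3}) are automatically satisfied in this setting, and then derive the ``particularly'' clause as an immediate consequence.

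First I would view $\mathbf{f}:X\rightarrow \mathbf{Y}$ as a morphism with source the rudimentary system $(X)$, so $\Lambda$ has a single element $\ast$ with $X_\ast = X$, $p_{\ast\ast}=1_X$, and $\phi(\mu)=\ast$ for every $\mu\in M$. Under this identification, $f_{\mu\lambda}=f_\mu \circ p_{\phi(\mu)\lambda}$ collapses to $f_\mu$, and the composition $f_\mu\circ p_{\phi(\mu)}$ appearing in Theorem \ref{thm.6.1} is simply $f_\mu$ itself. The hypothesis that $f=\lim \mathbf{f}$ is an epimorphism (respectively a weak epimorphism) is given, and the hypothesis that each $f_\mu\circ p_{\phi(\mu)}$ is an epimorphism (respectively a weak epimorphism) reduces to the assumption that each $f_\mu$ is. What remains to check is uniform movability of $\mathbf{f}$.

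Uniform movability of $\mathbf{f}$ is the key ingredient, and it is furnished without work by Example \ref{ex.3.5}: any morphism of inverse systems whose source is rudimentary is uniformly movable. Indeed, for $\mu \in M$ the unique index $\ast$ serves as a uniform movability index, with $\mathbf{u}:X\rightarrow \mathbf{Y}$ defined componentwise by $u_{\mu'}=f_{\mu'}$, satisfying $f_\mu = \mathbf{q}_\mu \circ \mathbf{u}$. With all three hypotheses verified, Theorem \ref{thm.6.1} (or its weakened counterpart Theorem \ref{thm.6.3}) yields that $\mathbf{f}$ is an epimorphism (respectively a weak epimorphism) in pro-$\mathcal{C}$.

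For the ``particularly'' clause, I would apply the first part to the pro-morphism $\mathbf{f}:=\mathbf{p}:X\rightarrow \mathbf{X}$. The induced morphism on limits is, by the very definition of $\mathbf{p}$ as an inverse limit cone, the identity $1_X:X\rightarrow X$, which is trivially an epimorphism (and a weak epimorphism in the pointed case). Combined with the hypothesis that each projection $p_\lambda$ is an epimorphism (weak epimorphism), the hypotheses of the first part apply, and we conclude that $\mathbf{p}$ is an epimorphism (weak epimorphism) in pro-$\mathcal{C}$. There is no genuine obstacle here beyond careful bookkeeping of indices in the rudimentary reduction; the substantive work has already been done in Theorems \ref{thm.6.1} and \ref{thm.6.3}, and in Example \ref{ex.3.5}.
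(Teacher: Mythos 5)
Your proof is correct and follows exactly the route the paper intends: the corollary is stated as an immediate consequence of Theorems \ref{thm.6.1} and \ref{thm.6.3}, with uniform movability supplied by Example \ref{ex.3.5} for the rudimentary source, $f_\mu\circ p_{\phi(\mu)}$ collapsing to $f_\mu$, and $\lim\mathbf{p}=1_X$ handling the ``particularly'' clause. No discrepancies to report.
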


Now we can state two variants of  Dydak's infinite-dimensional Whitehead theorem in shape theory \cite{Dydak}.
\begin{cor}\label{cor.6.6}
Let $F:(X,\ast)\rightarrow (Y,\ast)$ be a weak shape equivalence of pointed connected topological spaces. Suppose that $X$ is of finite shape dimension, $sh X<\infty$, and that $F$ is a movable pointed shape domination. Then $F$ is a pointed shape equivalence.
\end{cor}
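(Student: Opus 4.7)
The plan is to reduce the claim to Dydak's classical infinite-dimensional Whitehead theorem, applied not to $F$ directly but to its shape right inverse. Since $F$ is a pointed shape domination there is a pointed shape morphism $G:(Y,\ast)\to(X,\ast)$ with $F\circ G=1_{(Y,\ast)}$. Applying the shape pro-homotopy-group functor $\pi_n$ for each $n$ gives $\pi_n(F)\circ\pi_n(G)=1$; since $F$ is a weak shape equivalence, $\pi_n(F)$ is a pro-isomorphism, hence $\pi_n(G)=\pi_n(F)^{-1}$ is also an isomorphism. Thus $G$ is itself a weak shape equivalence.

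Next I would exploit the movability of $F$ to push structure from $X$ onto $Y$. The movability of $F$ together with the existence of the right inverse $G$ lets us apply Corollary \ref{cor.4.5}(a), which gives that $(Y,\ast)$ is a movable pointed shape object. Since $(Y,\ast)$ is also pointed shape dominated by $(X,\ast)$ and shape dimension is monotone under shape domination, $\operatorname{sh}Y\leq \operatorname{sh}X<\infty$. Hence $(Y,\ast)$ is a pointed connected movable space of finite shape dimension.

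Finally, Dydak's infinite-dimensional Whitehead theorem \cite{Dydak} applied to the weak shape equivalence $G:(Y,\ast)\to(X,\ast)$ — whose source is pointed connected, movable, and of finite shape dimension — yields that $G$ is a pointed shape equivalence. The relation $F\circ G=1_{(Y,\ast)}$ then forces $F=G^{-1}$, so $F$ is a pointed shape equivalence, as desired.

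The main obstacle in carrying this plan out rigorously is matching the exact hypotheses of Dydak's theorem. If the version invoked requires movability and finite shape dimension of \emph{both} spaces rather than just of the source, one must additionally verify that $(X,\ast)$ itself inherits movability — for instance by noting that the idempotent $G\circ F:(X,\ast)\to(X,\ast)$ is a movable shape morphism by Theorem \ref{thm.4.4}, and then combining this with the weak equivalence hypothesis — before Dydak's theorem may be applied. The remaining ingredients (closure of finite shape dimension under shape domination, and the functoriality of $\pi_n$) are standard in abstract shape theory and do not require further input from the present paper.
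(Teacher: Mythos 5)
Your argument reaches the correct conclusion, but by a more roundabout route than the paper's, and the decisive citation is stated with mismatched hypotheses. The paper's proof is direct: from $F\circ G=1_{(Y,\ast)}$ and the movability of $F$ it deduces via Corollary \ref{cor.4.5} a) that $(Y,\ast)$ is a movable shape object, and then applies the infinite-dimensional Whitehead theorem (\cite{Mard-Segal2}, Ch.~II, \S 7.2, Theorem 6(i)) to $F$ itself, whose clause (i) requires exactly $sh\,X<\infty$ (a hypothesis) and movability of $(Y,\ast)$ (just proved). Your detour through the right inverse $G$ -- showing $G$ is a weak shape equivalence by functoriality of the homotopy pro-groups, bounding $sh\,Y\leq sh\,X$ by monotonicity under shape domination, applying the Whitehead theorem to $G$, and finally inverting to get $F=G^{-1}$ -- is workable, but it buys nothing over the direct application to $F$ and adds steps (the weak-equivalence status of $G$ and the finiteness of $sh\,Y$) that turn out to be superfluous.

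The one point you must repair is the invocation of Dydak's theorem for $G$. The theorem as used in the paper has two clauses: (i) finite shape dimension of the \emph{source} together with movability of the \emph{target}, and (ii) finite shape dimension of the \emph{target} together with movability of the \emph{source}. The combination you quote for $G:(Y,\ast)\rightarrow(X,\ast)$ -- ``source movable and of finite shape dimension'' -- is neither clause. Fortunately you already possess what clause (ii) needs for $G$: $sh\,X<\infty$ is a hypothesis and $(Y,\ast)$ is movable by Corollary \ref{cor.4.5} a), so the argument is salvaged by citing (ii) rather than your stated combination. By contrast, the fallback you sketch in the last paragraph does not work: movability of the shape morphism $G\circ F$ (Theorem \ref{thm.4.4}) gives no information about movability of $(X,\ast)$, since Corollary \ref{cor.4.5} a) requires a right inverse, which the idempotent $G\circ F$ need not admit; indeed nothing in the hypotheses forces $(X,\ast)$ to be movable.
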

\begin{proof}
If $F$ is a pointed shape domination this  means that there exists a shape morphism $G:(Y,\ast)\rightarrow (X,\ast)$ such that $FG=1_{(Y,\ast)}$. Then we can apply Corollary \ref{cor.4.7} a) and we deduce that $(Y,\ast)$ is movable. Now the conclusion of corollary follows from \cite{Mard-Segal2}, Ch. II, \S 7.2, Theorem 6 (i).
\end{proof}
\begin{cor}\label{cor.6.7}
Let $F:(X,\ast)\rightarrow (Y,\ast)$ be a weak shape equivalence of pointed connected topological space. Suppose that $Y$ is of finite shape dimension, $sh Y<\infty$, and that $F$ is a co-movable pointed shape morphism which has a left inverse. Then $F$ is a pointed shape equivalence.
\end{cor}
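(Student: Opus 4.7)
The plan is to reduce the statement to Corollary 6.6 applied to the left inverse. Denote by $G:(Y,*)\to(X,*)$ the left inverse of $F$, so that $G\circ F = 1_{(X,*)}$. The first step exploits the co-movability hypothesis: by Remark 5.22, the shape-level analogue of Corollary 5.16 is at our disposal, and since $F$ is a co-movable shape morphism admitting a left inverse, it yields that $(X,*)$ is a movable shape object.

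The second step is to verify that $G$ itself satisfies the hypotheses of Corollary 6.6. That $G$ is a pointed shape domination is immediate, since $F$ serves as a right inverse for $G$. That $G$ is a movable shape morphism follows from Theorem 4.4 combined with the movability of $(X,*)$ just established (the target of $G$ is a movable shape object, hence any shape morphism into it is movable). Finally, $G$ is a pointed weak shape equivalence: for every $n$, applying the shape group functor $\pi_n$ to $G\circ F = 1_{(X,*)}$ gives $\pi_n(G)\circ \pi_n(F) = \mathrm{id}$, and since $\pi_n(F)$ is already an isomorphism (as $F$ is a weak shape equivalence), one deduces $\pi_n(G) = \pi_n(F)^{-1}$, also an isomorphism.

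With these three facts in hand, Corollary 6.6 applied to $G:(Y,*)\to(X,*)$ — using the assumption $\operatorname{sh} Y<\infty$ — delivers that $G$ is a pointed shape equivalence. Consequently $F$, being the right inverse of the shape isomorphism $G$, must coincide with $G^{-1}$ and is itself a pointed shape equivalence. The only non-routine step is the transfer of Corollary 5.16 from pro-morphisms to shape morphisms via Remark 5.22; once that is accepted, the remainder of the argument is a direct invocation of results already established in the paper.
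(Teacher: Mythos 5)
Your argument is correct, but it takes a different route than the paper for the second half. The paper's proof is very short: after obtaining that $(X,\ast)$ is a movable shape object from the shape version of Corollary \ref{cor.5.16} (exactly your first step), it simply cites Marde\v{s}i\'{c}--Segal, Ch.\ II, \S 7.2, Theorem 6 (ii), which applies directly to $F$ under the hypotheses ``$sh\,Y<\infty$ and $(X,\ast)$ movable.'' You instead reduce to Corollary \ref{cor.6.6} applied to the left inverse $G$: you check that $G$ is a weak shape equivalence (functoriality of the homotopy pro-group functor applied to $G\circ F=1_{(X,\ast)}$, together with the invertibility of $\pi_n(F)$ --- note the relevant functor here is pro-$\pi_n$, since that is how weak shape equivalence is defined, though your computation is identical), that $G$ is a movable shape morphism (Theorem \ref{thm.4.4}, target movable), and that $G$ is a pointed shape domination (right inverse $F$), then conclude $G$ is a shape equivalence and $F=G^{-1}$. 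This buys you economy of external input --- you only ever use part (i) of the cited Whitehead theorem, already embedded in Corollary \ref{cor.6.6} --- at the cost of the extra verifications and the final inversion step; the paper's proof is more direct but needs part (ii) of that theorem as a separate external ingredient. Both arguments are sound.
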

\begin{proof}
By Corollary \ref{cor.5.16} in the shape version, $(X,\ast)$ is movable. Then the conclusion of this corollary follows from \cite{Mard-Segal2}, Ch. II, \S 7.2, Theorem 6 (ii).
\end{proof}

P.S.Gevorgyan\hfill I. Pop

Faculty of Mathematics\hfill Faculty of
Mathematics

Moscow State Pedagogical University\hfill "Al.I.Cuza" University

88, Vernadskogo,Moscow, Russia \hfill 700505 Iasi, Romania

E-mail: pgev@yandex.ru \hfill ioanpop@uaic.ro

\end{document}